\documentclass[10pt,english]{extarticle}

\hoffset=-2cm
\voffset=-3cm

\oddsidemargin=0cm
\marginparwidth=4cm

\usepackage{caption}
\usepackage{enumerate}
\usepackage{t1enc}
\usepackage{babel}
\usepackage{amsthm}
\usepackage{amsmath}
\usepackage{amsfonts}
\usepackage{mathrsfs}
\usepackage[cp1250]{inputenc}
\usepackage{amsthm}
\usepackage{amsmath}
\usepackage{amssymb}
\usepackage{amsfonts}
\usepackage{indentfirst}
\usepackage{textcomp}
\usepackage{mathabx}
\usepackage{bbm}
\usepackage{tikz}
\usepackage{xcolor}
\usepackage{extsizes}
\usepackage[textwidth=18.5cm,textheight=23cm,left=3.5cm,voffset=-1cm,,footskip=1.5cm,headheight=0cm]{geometry}
\topmargin=-0.5cm
\usepackage{fancyhdr}
\usepackage{yfonts}
\fancyhf{}

\usepackage{float}
\usepackage{pgf}
\usepackage{pgfplots}
\usepackage{amsfonts,amsmath,amssymb,graphicx,array,url}
\usepackage{polski}
\usepackage[cp1250]{inputenc}
\pgfplotsset{compat=1.12}

\makeatletter
\def\namedlabel#1#2{\begingroup
    #2%
    \def\@currentlabel{#2}%
    \phantomsection\label{#1}\endgroup
}
\makeatother

\begin{document}

\title{On general financial markets with concave transactions costs}

\author{Agnieszka Rygiel \footnote{Krakow University of Economics, Department of Mathematics, Email: 
rygiela@uek.krakow.pl, the author was financed by the project no. 089/EIM/2024/POT of Krakow University of Economics.} \and
Lukasz Stettner  \footnote{Institute of Mathematics
 Polish Academy of Sciences,
  Sniadeckich 8, 00-656 Warsaw, Email: stettner@impan.pl, the author acknowledge research support by
Polish National Science Centre grant no. 2020/37/B/ST1/00463. Part of
this work was completed with the help of  University of Warsaw grant
IDUB - POB3-D110-003/2022 and Simons Semester ``Stochastic Modeling
and Control'' at IMPAN in May-June 2023.}
}
\maketitle

\begin{abstract}In the paper we study markets with concave transaction costs which depend in a concave way on 
the volume of
transaction. This is typical situation in the case of small investors, which commonly appears in currency 
and real estate markets.  
Sufficient conditions for absence of arbitrage are formulated. New notion of asymptotic arbitrage is 
introduced and used to study the 
above mentioned markets.
\end{abstract}

{{\bf Keywords:} financial markets, concave transaction costs, arbitrage} 

{{\bf AMS Subject Classification:} 93E20, 91G10}

\theoremstyle{plain}
\setlength{\parskip}{12pt plus0pt minus12pt}
\newtheorem{Theorem}{Theorem}[section]
\newtheorem{Lemma}[Theorem]{Lemma}
\newtheorem{Corollary}[Theorem]{Corollary}
\newtheorem{Notation}[Theorem]{Notation}
\newtheorem{Proposition}[Theorem]{Proposition}
\newtheorem{Fact}[Theorem]{Fact}
\newtheorem{Definition}[Theorem]{Definition}

\theoremstyle{definition}
\newtheorem{Remark}[Theorem]{Remark}
\newtheorem{Example}[Theorem]{Example}

\theoremstyle{remark}
\newtheorem{Convention}{Convention}[section]
\newtheorem{Assumption}{Assumption}[section]
\numberwithin{equation}{section}
\def\ve{\varepsilon}

\section{Introduction}
On a given probability space $(\Omega, {\cal F}, ({\cal F}_t), \mathbb{P})$ we consider a discrete time market 
with concave
transaction costs. We have bank and asset accounts. The prices of asset depend on a number of assets we buy or 
sell. We have a bid
curve $0\leq m\to
\underline{S}_t(m)$ for which we sell $m$ assets and an ask curve $0\leq l\to \overline{S}_t(l)$ for which we 
buy $l$ assets, which
are adapted to ${\cal F}_t$. Since we also allow fixed transaction costs: we can sell $m$ assets at time $t$ 
obtaining
$b_t(m)=b_0+m\underline{S}_t(m)$, or buy $l$ assets spending $a_t(l)=a_0+l\overline{S}_t(l)$ at time $t$, where 
$b_0\leq 0$ while
$a_0\geq 0$.
In the paper we shall assume that
\begin{enumerate}
\item[(a1)]the mapping $0\leq m\to \underline{S}_t(m)$ is increasing and continuous with limit 
    $\underline{S}_t(\infty)$ at 
    $\infty$, for
sufficiently large $m$ is differentiable and $\lim_{m\to \infty} m \underline{S}'_t(m)=0$,
\item[(a2)]
the mapping $0\leq m\to b_t(m)$ is increasing convex,
\item[(a3)]
the mapping $0\leq l\to \overline{S}_t(l)$ is decreasing and continuous with limit $\overline{S}_t(\infty)$ 
at $\infty$, for
sufficiently large $l$ is differentiable and $\lim_{l\to \infty} l \overline{S}'_t(l)=0$,
\item[(a4)]
the mapping $0\leq l\to a_t(l)$ is increasing concave,
\item[(a5)]
furthermore $\underline{S}_t(\infty)<\overline{S}_t(\infty)$.
\end{enumerate}

In what follows all equalities and inequalities will be considered $\mathbb{P}$ almost everywhere. 
\begin{Remark}
Although we generally interested in strictly concave (concave)  mappings $0\leq l\to a_t(l)$ ($0\leq m\to 
b_t(m)$) we admit the case
when they are simply concave (convex) to cover the case with fixed plus proportional transaction costs where 
$a_t(l)=a_0+l\overline{S}_t$ and
$b_t(m)=b_0+m\underline{S}_t$, where $\overline{S}_t$ and $\underline{S}_t$ do not depend on $l$ or $m$ respectively.
Since the mapping  $0\leq l\to a_t(l)$ ($0\leq m\to b_t(m)$) is concave (convex) it is continuous and therefore 
we have continuity of
$0\leq l\to \overline{S}_t(l)$
($0\leq m\to \underline{S}_t(m)$) for $l>0$ ($m>0$).
\end{Remark}

We have the following consequences of our assumptions on bid and ask curves

\begin{Lemma}\label{lemmaineq}
We have
\begin{equation}\label{ineq1}
a_t(l_1)-a_t(l_1-l_2)\leq a_t(l_2) \ \ for \ \ 0<l_2<l_1
\end{equation}
\begin{equation}\label{ineq2}
b_t(m_1)-b_t(m_1-m_2)\geq b_t(m_2) \ \ for \ \ 0<m_2<m_1
\end{equation}
\begin{equation}\label{ineq3}
\overline{S}_t(0)\geq  {a_t(l_1)-a_t(l_1-l_2)\over l_2}\geq \overline{S}_t(\infty) \ \ for \ \ 0<l_2<l_1
\end{equation}
\begin{equation}\label{ineq4}
\underline{S}_t(0)\leq {b_t(m_1)-b_t(m_1-m_2)\over m_2}\leq\underline{S}_t(\infty)  \ \ for \ \ 0<m_2<m_1
\end{equation}
 \begin{equation}\label{ineq5}
a_t(l_1-l_2) < a_t(l_1)-b_t(l_2) \ \ for \ \ 0<l_2<l_1
\end{equation}
\begin{equation}\label{ineq6}
b_t(m_1-m_2)>b_t(m_1)-a_t(m_2) \ \ for \ \ 0<m_2<m_1
\end{equation}
with strict inequalities in \eqref{ineq1} and \eqref{ineq3} or  \eqref{ineq2} and \eqref{ineq4} when the 
mapping $0\leq l\to a_t(l)$
or $0\leq m\to b_t(m)$ is strictly concave or strictly convex respectively.
\end{Lemma}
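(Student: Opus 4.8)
The plan is to reduce all six inequalities to two structural facts about the cost curves and then combine them with the separation hypothesis (a5); throughout I would fix $t$ and suppress it from the notation. The first fact is that a concave function $f$ with $f(0)\ge 0$ is subadditive, while a convex function $g$ with $g(0)\le 0$ is superadditive. For \eqref{ineq1}, since $a_t$ is concave by (a4) with $a_t(0)=a_0\ge 0$, for $u,v>0$ I would write $u=\frac{u}{u+v}(u+v)+\frac{v}{u+v}\cdot 0$ and apply concavity to get $a_t(u)\ge \frac{u}{u+v}a_t(u+v)+\frac{v}{u+v}a_t(0)\ge \frac{u}{u+v}a_t(u+v)$, and symmetrically for $a_t(v)$; adding gives $a_t(u+v)\le a_t(u)+a_t(v)$, and the choice $u=l_1-l_2$, $v=l_2$ yields \eqref{ineq1}. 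The convex case for $b_t$ (with $b_t(0)=b_0\le 0$ by (a2)) is identical with reversed inequalities and produces the superadditivity behind \eqref{ineq2}. If $a_t$ (resp.\ $b_t$) is strictly concave (convex), the concavity step is strict because $0<\frac{u}{u+v}<1$, so \eqref{ineq1} and \eqref{ineq2} become strict.

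The second fact is the monotonicity of chord slopes. For the concave $a_t$ and $0<x<y$ the three-chord inequality gives $\frac{a_t(y)-a_t(x)}{y-x}\le \frac{a_t(x)-a_t(0)}{x}=\overline{S}_t(x)$, the last equality because $a_t(x)-a_t(0)=x\overline{S}_t(x)$. Taking $x=l_1-l_2$, $y=l_1$ bounds the middle term of \eqref{ineq3} by $\overline{S}_t(l_1-l_2)\le \overline{S}_t(0)$, using that $\overline{S}_t$ is decreasing (a3); this is the upper bound, and it is strict under strict concavity. For the lower bound I would again use chord monotonicity: for any $c>l_1$, $\frac{a_t(l_1)-a_t(l_1-l_2)}{l_2}\ge \frac{a_t(c)-a_t(l_1-l_2)}{c-(l_1-l_2)}$, and letting $c\to\infty$ the right-hand side tends to $\overline{S}_t(\infty)$, since $a_t(c)=a_0+c\overline{S}_t(c)$ and $\overline{S}_t(c)\to\overline{S}_t(\infty)$. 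Strict concavity makes the chord step strict for a fixed $c>l_1$, giving the strict lower bound. The convex/increasing case for $b_t$ and $\underline{S}_t$ is completely symmetric and delivers \eqref{ineq4}.

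Finally, \eqref{ineq5} and \eqref{ineq6} combine these bounds with (a5). Rewriting \eqref{ineq5} as $a_t(l_1)-a_t(l_1-l_2)>b_t(l_2)$, I would bound the left side below by $l_2\overline{S}_t(\infty)$ via \eqref{ineq3}, and the right side above by $b_t(l_2)=b_0+l_2\underline{S}_t(l_2)\le l_2\underline{S}_t(\infty)$ (since $b_0\le 0$ and $\underline{S}_t$ is increasing). As $l_2>0$, (a5) gives $l_2\underline{S}_t(\infty)<l_2\overline{S}_t(\infty)$, so the strict inequality follows. Symmetrically, \eqref{ineq6} rewritten as $a_t(m_2)>b_t(m_1)-b_t(m_1-m_2)$ has left side at least $m_2\overline{S}_t(\infty)$ (because $a_0\ge 0$ and $\overline{S}_t$ is decreasing) and right side at most $m_2\underline{S}_t(\infty)$ by \eqref{ineq4}, and (a5) closes the gap.

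The only delicate point is the lower bound in \eqref{ineq3} (and the matching upper bound in \eqref{ineq4}): it is the single place where the behaviour at infinity enters, and the natural route is the limiting chord argument above, which uses only $\overline{S}_t(c)\to\overline{S}_t(\infty)$ rather than the differentiability and growth conditions in (a1),(a3). Those conditions provide an alternative derivation through $a_t'(l)=\overline{S}_t(l)+l\overline{S}_t'(l)\to\overline{S}_t(\infty)$ together with the monotonicity of $a_t'$, but the chord argument is cleaner since it avoids any appeal to differentiability.
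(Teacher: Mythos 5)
Your proof is correct, and its skeleton matches the paper's (very terse) proof: \eqref{ineq1}--\eqref{ineq2} from subadditivity/superadditivity of $a_t$, $b_t$ using $a_0\geq 0$, $b_0\leq 0$; \eqref{ineq3}--\eqref{ineq4} from slope bounds; \eqref{ineq5}--\eqref{ineq6} by combining \eqref{ineq3}--\eqref{ineq4} with (a5), exactly as the paper does. The one genuine divergence is the mechanism behind the bounds at infinity in \eqref{ineq3} and \eqref{ineq4}: the paper explicitly invokes the differentiability and growth conditions from (a1) and (a3), i.e.\ $\lim_{l\to\infty} l\,\overline{S}'_t(l)=0$ (so that $a_t'(l)=\overline{S}_t(l)+l\,\overline{S}'_t(l)\to\overline{S}_t(\infty)$ and monotonicity of the derivative of a concave function yields $a_t'\geq \overline{S}_t(\infty)$), whereas you replace this with the monotone-chord argument, letting $c\to\infty$ in $\bigl(a_t(c)-a_t(l_1-l_2)\bigr)/\bigl(c-(l_1-l_2)\bigr)$ and using only $\overline{S}_t(c)\to\overline{S}_t(\infty)$. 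Your route is slightly more general and cleaner: it shows the lemma needs neither eventual differentiability nor the vanishing of $l\,\overline{S}'_t(l)$, only the existence of the limits $\overline{S}_t(\infty)$, $\underline{S}_t(\infty)$ guaranteed by (a1), (a3) — a point you correctly flag yourself. The paper's derivative route is shorter to state given its standing assumptions but is strictly less economical in hypotheses. Your tracking of strictness (strict three-chord inequality under strict concavity/convexity, and strictness of \eqref{ineq5}--\eqref{ineq6} coming from (a5) regardless) is also accurate and in fact more carefully argued than in the paper.
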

\begin{proof} Inequalities \eqref{ineq1} and \eqref{ineq2}  follow directly from  concavity or  convexity of 
the mappings
 $0\leq l\to a_t(l)$ and $0\leq m\to b_t(m)$. To show \eqref{ineq3} we use concavity of $0\leq l\to a_t(l)$ and 
 the fact that
 $\lim_{l\to \infty} l \overline{S}'_t(l)=0$. Similarly to show \eqref{ineq4} we use convexity of $0\leq m\to 
 \underline{S}_t(m)$ and
 $\lim_{m\to \infty} m \underline{S}'_t(m)=0$. \eqref{ineq5} follows directly from \eqref{ineq3} and assumption 
 that
 $\underline{S}_t(\infty)<\overline{S}_t(\infty)$ and \eqref{ineq6} can be shown from \eqref{ineq4}.
 \end{proof}
Inequalities \eqref{ineq1}, \eqref{ineq2}, \eqref{ineq5}, \eqref{ineq6} are consistent with our intuition. 
Namely by \eqref{ineq1}  it
is cheaper to buy $l_1$ assets than make two immediate transactions buying first $l_1-l_2$ and then $l_2$ 
assets. Similarly we have
with selling assets (see \eqref{ineq2}). Accordingly to \eqref{ineq5} it is cheaper to buy $l_1-l_2$ assets 
than buy $l_1$ and sell
$l_2$ assets. Similarly we have with selling assets in \eqref{ineq6}. We have the following example of bid and ask 
curves

\begin{Example}\label{ex1}
Let for $m\geq 0$ and $l\geq 0$ with $0<\alpha<1$
\begin{equation}\label{exbid}
\underline{S}_t(m)=\underline{S}_t(\infty)-{\underline{S}_t(\infty)-\underline{S}_t(0) \over (m+1)^\alpha}
\end{equation}
\begin{equation}\label{exask}
\overline{S}_t(l)=\overline{S}_t(\infty)+{\overline{S}_t(0)-\overline{S}_t(\infty) \over (l+1)^\alpha}
\end{equation}
where $\overline{S}_t(\infty)\geq \underline{S}_t(\infty)$.
In particular when
$\underline{S}_t(\infty)=(1+p)\underline{S}_t(0)$ and $\overline{S}_t(\infty)=(1-q)\overline{S}_t(0)$ with 
$p\geq 0$, $0\leq q<1$
we have
\begin{equation}\label{exbidp}
\underline{S}_t(m)=\underline{S}_t(0){(1+p)(m+1)^\alpha-p \over (m+1)^\alpha}
\end{equation}
and
\begin{equation}\label{exaskp}
\overline{S}_t(l)=\overline{S}_t(0){(1-q)(l+1)^\alpha + q \over (l+1)^\alpha}.
\end{equation}
We can easily check that these curves satisfy all assumptions imposed above.
\end{Example}
In what follows it will be convenient to write
\begin{equation}\label{notat}
a_t(l)=a(l,\overline{S}_t):=a_0+l \overline{S}_t(l)   \ \ and \ \ b_t(m)=b(m,\underline{S}_t):=b_0+m 
\underline{S}_t(m)
\end{equation}
to point out dependence of $a_t$ ($b_t$) on the ask curve $l\to a_t(l)$ (bid curve $m\to b_t(m)$). We shall 
denote by $(x_t,y_t)$
amount of money $x_t$ on our bank account and number of assets $y_t$ we have in our portfolio at time $t$ 
before possible
transactions. Given bank-stock position $(x,y)$ at time $t$ after selling $m$ assets and buying $l$ assets our 
position is $(x+m
\underline{S}_t - l \overline{S}_t, y-m+l)$. We can also liquidate our position $(x,y)$ at time $t$ 
transferring it to bank account
for which we use liquidation function $L_t$:
\begin{eqnarray}\label{liquid}
&&L_t(x,y)=\mathbbm{1}_{y\geq 0} \left(x+(b_0+y \underline{S}_t(y))^+\right) + \mathbbm{1}_{y<0} \left(x- a_0+ 
y
\overline{S}_t(-y)\right) = \nonumber \\
&&\mathbbm{1}_{y\geq 0} 
\left(x+(b(y,\underline{S}_t))^+\right)+\mathbbm{1}_{y<0}\left(x-a(-y,\overline{S}_t)\right)=x+L_t(0,y).
\end{eqnarray}
Positive part of $b_0+y \underline{S}_t$ means that we do not sell assets when possible income is below fixed 
transaction cost $-b_0$.
We can define the set $G_t$ of solvent positions at time $t$:
\begin{equation}\label{solv}
G_t:=\left\{(x,y): L_t(x,y)\geq 0\right\}.
\end{equation}
We easily see that $G_t$ is a closed subset of  $\mathbb{R}^{2}$. It contains $\mathbb{R}_+^{2}$ and is bounded 
by two concave curves:
$y=-a_t^{-1}(x)$ for $x\geq a_0$, and $y=b_t^{-1}(-x)$ for $x\leq 0$. Consequently $G_t$ is not a convex set.
We can equivalently write that
\begin{equation}\label{eqdefL}
L_t(x,y)=\max\left\{\alpha: (x-\alpha,y)\in G_t \right\},
\end{equation}
so that there is a $1-1$ relation between $L_t$ and $G_t$. We shall write sometimes $L^G$ to point out 
dependence of $L_t$ on $G_t$.
Directly from \eqref{eqdefL} we have
\begin{Corollary}\label{zeronpart}
 $L_t(x,y)=0$ for any $(x,y)\in \partial G_t$ if and only if $a_0=0$. When $a_0>0$ we have $L_t(x,0)>0$ for 
 $x\in (0,a_0]$ and 
 $L_t(x,y)=0$ for $(x,y)\in \partial G_t\setminus \left\{(0,a_0]\times \left\{0\right\}\right\}$.
\end{Corollary}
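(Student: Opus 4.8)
The plan is to read everything off the representation \eqref{eqdefL} together with the translation identity $L_t(x,y)=x+L_t(0,y)$ from \eqref{liquid}. Setting $\ell(y):=-L_t(0,y)$, the horizontal section of $G_t$ at height $y$ is the closed half-line $\{x:(x,y)\in G_t\}=[\ell(y),\infty)$, and \eqref{eqdefL} then reads $L_t(x,y)=x-\ell(y)$. In other words $L_t(x,y)$ is the signed horizontal distance from $(x,y)$ to the left edge of its section, so that $L_t(x,y)=0$ exactly when $x=\ell(y)$ and $L_t(x,y)>0$ exactly when $x>\ell(y)$. The whole statement thus reduces to deciding which boundary points fail to be the left endpoint of their own section.

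First I would compute the profile $\ell$ in the three regimes: for $y<0$ one gets $\ell(y)=a_t(-y)\ge a_0$, for $y=0$ one gets $\ell(0)=-(b_0)^+=0$ because $b_0\le 0$, and for $y>0$ one gets $\ell(y)=-(b_t(y))^+\le 0$. Since $a_t$ and $b_t$ are continuous (the Remark following the assumptions), $\ell$ is continuous on $\{y<0\}$ and on $\{y\ge 0\}$, with $\lim_{y\to 0^+}\ell(y)=0=\ell(0)$, so the positive part in the region $y>0$ creates no jump. The only possible discontinuity is a left jump at the origin, and indeed $\lim_{y\to 0^-}\ell(y)=\lim_{l\to 0^+}a_t(l)=a_0$. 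Consequently $\ell$ is continuous everywhere if and only if $a_0=0$, and otherwise has a single upward jump of height $a_0$ at $y=0$.

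Then I would identify $\partial G_t$ from $\ell$. Where $\ell$ is continuous the interior of $G_t$ is $\{x>\ell(y)\}$ and the boundary is exactly the graph $\{(\ell(y),y)\}$, so every boundary point has $x=\ell(y)$ and hence $L_t=0$; this already disposes of the case $a_0=0$. When $a_0>0$ the jump forces $\partial G_t$ to contain, besides the two graph branches, the horizontal segment $\{(x,0):0\le x\le a_0\}$: for $0<x\le a_0$ the point $(x,0)$ lies in $G_t$ while $(x,-\varepsilon)\notin G_t$ for all small $\varepsilon>0$, since $\ell(-\varepsilon)=a_t(\varepsilon)>x$; on the other hand $(x,0)$ with $x>a_0$ is interior. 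On this segment $L_t(x,0)=x$, which is strictly positive on $(0,a_0]$ and vanishes only at the corner $(0,0)$, while every remaining boundary point is a left endpoint and carries $L_t=0$. This yields exactly the asserted dichotomy and shows that $L_t\equiv 0$ on $\partial G_t$ fails as soon as $a_0>0$.

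I expect the only delicate point to be the bookkeeping of the boundary near $y=0$: one must check that the upward jump of $\ell$ is the sole source of boundary points with $x>\ell(y)$ (equivalently $L_t>0$), that the positive part in the formula for $\ell$ on $\{y>0\}$ produces no second jump, and that the endpoint $x=a_0$ really lies on $\partial G_t$ while $x>a_0$ does not. Everything else is a direct consequence of the identity $L_t(x,y)=x-\ell(y)$.
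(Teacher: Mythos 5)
Your proof is correct and is essentially the argument the paper intends: the Corollary is stated as following ``directly from \eqref{eqdefL}'', and your computation of the section profile $\ell(y)=-L_t(0,y)$ (equal to $a_t(-y)$ for $y<0$ and $-(b_t(y))^+$ for $y\geq 0$), with its sole upward jump of height $a_0$ at $y=0$ producing the horizontal boundary segment $[0,a_0]\times\{0\}$ on which $L_t(x,0)=x$, is exactly the detailed verification behind that assertion. You correctly isolate and check the one delicate point (that the jump at $y=0$ is the only source of boundary points with $L_t>0$, the positive part on $\{y>0\}$ creating no second jump), so nothing is missing.
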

Therefore in what follows we shall understand $\partial G_t$ as the set $\left\{(x,y): L_t(x,y)=0\right\}$ and 
interior $G_t^0$ of 
$G_t$ as $G_t^0:=G_t \setminus \partial G_t=\left\{(x,y): L_t(x,y)>0 \right\}$. In what follows market with concave transaction costs will be called $(G_t)$ market to point out its dependence on the solvency sets $(G_t)$.   
From \eqref{liquid} we obtain
\begin{Lemma}
We have
\begin{equation}\label{liqin}
L_t((x,y)+(\bar{x},\bar{y}))\geq L_t(x,y)+L_t(\bar{x},\bar{y}).
\end{equation}
\end{Lemma}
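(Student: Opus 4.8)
The plan is to reduce the planar inequality \eqref{liqin} to a one-dimensional superadditivity statement and then verify the latter by a case analysis driven by the inequalities \eqref{ineq1}--\eqref{ineq6}. First I would exploit the identity $L_t(x,y)=x+L_t(0,y)$ recorded in \eqref{liquid}: on expanding both sides of \eqref{liqin} the bank components $x$ and $\bar x$ enter additively and cancel, so that \eqref{liqin} is equivalent to
\begin{equation*}
\phi(y+\bar y)\geq \phi(y)+\phi(\bar y),\qquad \phi(y):=L_t(0,y),
\end{equation*}
where $\phi(y)=(b_t(y))^+$ for $y\geq 0$ and $\phi(y)=-a_t(-y)$ for $y<0$; in particular $\phi(0)=(b_0)^+=0$, while $\phi\geq 0$ on $[0,\infty)$ and $\phi\leq 0$ on $(-\infty,0)$.

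I would then split according to the signs of $y$ and $\bar y$, using the symmetry of the claim in its two arguments. If $y,\bar y\geq 0$, the superadditivity $b_t(y+\bar y)\geq b_t(y)+b_t(\bar y)$ that follows from \eqref{ineq2}, combined with the monotonicity of $b_t$, gives the claim once one checks that the positive parts cause no loss (treating separately the instances where $b_t(y)$ or $b_t(\bar y)$ is non-positive). If $y,\bar y<0$, putting $l_1=-y$ and $l_2=-\bar y$ turns the claim into the subadditivity $a_t(l_1+l_2)\leq a_t(l_1)+a_t(l_2)$, which is a direct consequence of \eqref{ineq1}.

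The mixed-sign case, say $y\geq 0>\bar y$ with $\bar y=-l$, is where the argument concentrates and which I expect to be the main obstacle, as it must absorb both the positive part and the interaction between the buy and sell curves. Here I would split further on the sign of $y-l$: when $y\geq l$ the target becomes $(b_t(y-l))^++a_t(l)\geq (b_t(y))^+$, which follows from \eqref{ineq6} in the form $b_t(y-l)+a_t(l)>b_t(y)$ when $b_t(y)>0$ and is immediate otherwise; when $y<l$ it becomes $a_t(l)-a_t(l-y)\geq (b_t(y))^+$, which follows from \eqref{ineq5} when $b_t(y)>0$ and from the monotonicity of $a_t$ otherwise. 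The delicate points are the boundary instance $l=y$ (and, trivially, $y=0$), where \eqref{ineq5}--\eqref{ineq6} are unavailable since they require a strict inequality between the arguments; the case $l=y$ I would settle directly by showing $a_t(y)\geq (b_t(y))^+$, which holds because $a_0\geq 0\geq b_0$ and $\overline{S}_t(y)\geq \overline{S}_t(\infty)>\underline{S}_t(\infty)\geq \underline{S}_t(y)$ by (a5). Collecting the cases yields \eqref{liqin}.
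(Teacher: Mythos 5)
Your proposal is correct and follows essentially the same route as the paper's proof: reduce via $L_t(x,y)=x+L_t(0,y)$ to a statement about the $y$-components and split into the cases $y,\bar y\geq 0$ (superadditivity of $b_t$ from \eqref{ineq2}), $y,\bar y<0$ (subadditivity of $a_t$ from \eqref{ineq1}), and the mixed case split on the sign of $y+\bar y$, settled by \eqref{ineq6} and \eqref{ineq5} respectively. You are in fact slightly more careful than the paper, which leaves the bookkeeping of the positive parts implicit and tacitly includes the boundary instance $y+\bar y=0$ in the subcase resolved by \eqref{ineq6} even though that inequality as stated requires $0<m_2<m_1$; your direct verification $a_t(y)\geq (b_t(y))^+$ via $a_0\geq 0\geq b_0$ and (a5) closes that gap cleanly.
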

\begin{proof} When $y, \bar{y}\geq 0$ \eqref{liqin} follows from $(b(y+\bar{y}, \underline{S}_t))^+\geq 
(b(y,\underline{S}_t))^+
+ (b(\bar{y}, \underline{S}_t))^+$. The case when $y, \bar{y}< 0$ is also immediate. Consider the case when 
$y\geq 0$ and $\bar{y}<0$.
When $y+\bar{y}\geq 0$ we have $L_t((x,y)+(\bar{x},\bar{y}))=x+\bar{x}+(b(y+\bar{y},\underline{S}_t))^+$ and 
using \eqref{ineq6} we
obtain $b(y,\underline{S}_t)-a(-\bar{y},\overline{S}_t)\leq b(y+\bar{y},\underline{S}_t)$. In the case when 
$y+\bar{y}<0$ we have
$L_t((x,y)+(\bar{x},\bar{y}))=x+\bar{x}-a(-(y+\bar{y}),\overline{S}_t)$ and using \eqref{ineq5} we obtain
$b(y,\underline{S}_t)-a(-\bar{y}, \overline{S}_t)\leq-a(-(y+\bar{y}),\overline{S}_t)$.
\end{proof}
Immediately from \eqref{liqin} we obtain
\begin{Corollary} We have
\begin{equation}\label{gcont}
G_t+G_t \subset G_t.
\end{equation}
\end{Corollary}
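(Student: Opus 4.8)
The plan is to unfold the definition of the solvency set in terms of the liquidation function and then invoke the superadditivity inequality established just above. Recall from \eqref{solv} that $(x,y)\in G_t$ is by definition equivalent to $L_t(x,y)\geq 0$. So proving the inclusion $G_t+G_t\subset G_t$ amounts to showing that whenever two positions are solvent, their vector sum is solvent as well.

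First I would fix arbitrary points $(x,y)\in G_t$ and $(\bar x,\bar y)\in G_t$, which by the definition above means precisely that $L_t(x,y)\geq 0$ and $L_t(\bar x,\bar y)\geq 0$. The goal is then to verify that the combined position satisfies $L_t\bigl((x,y)+(\bar x,\bar y)\bigr)\geq 0$, since that is exactly the membership condition $(x,y)+(\bar x,\bar y)\in G_t$.

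The single key step is to apply the superadditivity of the liquidation function, inequality \eqref{liqin}, which gives
\begin{equation*}
L_t\bigl((x,y)+(\bar x,\bar y)\bigr)\geq L_t(x,y)+L_t(\bar x,\bar y)\geq 0,
\end{equation*}
where the final inequality is immediate from the nonnegativity of the two summands. Since $(x,y)$ and $(\bar x,\bar y)$ were arbitrary elements of $G_t$, this establishes $G_t+G_t\subset G_t$.

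I do not expect any genuine obstacle here: all the substantive work was carried out in proving \eqref{liqin}, and this corollary is purely a bookkeeping consequence. The only point deserving a moment's attention is the direction of the inequality — superadditivity pushes the liquidation value of the aggregated position \emph{up}, so nonnegativity of the parts transfers to the whole, matching the financial intuition that simultaneously holding two individually solvent portfolios cannot render one insolvent.
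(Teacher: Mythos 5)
Your proof is correct and is exactly the argument the paper intends: the Corollary is stated as an immediate consequence of the superadditivity inequality \eqref{liqin}, and you have simply written out the two-line verification (unfold the definition \eqref{solv}, apply \eqref{liqin}, conclude nonnegativity). Nothing is missing and no alternative route is taken.
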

We also have that
\begin{Lemma}
$(x,y)\to L_t(x,y)$ is continuous for $y\neq 0$. When $a_0=0$ it is continuous everywhere.
\end{Lemma}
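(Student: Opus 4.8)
The plan is to exploit the additive structure of the liquidation function. From \eqref{liquid} one reads off that $L_t(x,y)=x+g_t(y)$, where
\[
g_t(y)=\mathbbm{1}_{y\geq 0}\,\bigl(b(y,\underline{S}_t)\bigr)^+ + \mathbbm{1}_{y<0}\,\bigl(-a(-y,\overline{S}_t)\bigr)
\]
depends only on $y$. Since $(x,y)\mapsto x$ is trivially continuous and a sum of a continuous function of $x$ and a (candidate) continuous function of $y$ is automatically jointly continuous, the whole question reduces to the continuity of the single-variable map $g_t$. First I would record this reduction, so that no genuinely two-dimensional argument is needed.

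Next I would establish continuity of $g_t$ away from $y=0$. On $\{y>0\}$ we have $g_t(y)=(b_0+y\underline{S}_t(y))^+$; by (a1) the map $y\mapsto\underline{S}_t(y)$ is continuous, hence so is $y\mapsto b_0+y\underline{S}_t(y)$, and composing with the (Lipschitz) positive part keeps continuity. On $\{y<0\}$ we have $g_t(y)=-a_0+y\overline{S}_t(-y)$, which is continuous by (a3). This already yields continuity of $L_t$ at every point with $y\neq 0$, which is the first assertion of the lemma.

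The only delicate point, and the crux of the argument, is the branch switch at $y=0$, where I would compute the two one-sided limits. For the right limit, finiteness of $\underline{S}_t(0)$ (guaranteed by monotonicity together with the finite limit at $\infty$) gives $\lim_{y\to 0^+}y\underline{S}_t(y)=0$, so $b(y,\underline{S}_t)\to b_0$; because $b_0\leq 0$ the positive part forces $\lim_{y\to 0^+}g_t(y)=(b_0)^+=0=g_t(0)$, i.e.\ $g_t$ is always right-continuous at $0$. For the left limit, finiteness of $\overline{S}_t(0)$ gives $\lim_{y\to 0^-}y\overline{S}_t(-y)=0$, whence $\lim_{y\to 0^-}g_t(y)=-a_0$. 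Thus left-continuity holds precisely when $a_0=0$; under the hypothesis $a_0=0$ both one-sided limits coincide with $g_t(0)=0$, and $g_t$ — hence $L_t$ — is continuous everywhere.

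The hard part is exactly this asymmetry at $y=0$: the positive part in the selling branch absorbs the fixed cost $b_0\leq 0$, pinning the right limit at $0$, whereas the buying branch carries the untruncated term $-a_0$, leaving a jump of size $a_0$. Pinpointing that this jump vanishes if and only if $a_0=0$ is the essential observation; everything else is routine continuity of compositions of the (assumed continuous) bid and ask curves.
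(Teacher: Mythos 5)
Your proof is correct. The paper states this lemma without giving any proof, and your argument is precisely the verification left implicit by the definition \eqref{liquid}: the identity $L_t(x,y)=x+L_t(0,y)$ reduces everything to the one-variable function $g_t$, whose continuity off $y=0$ follows from (a1) and (a3), while the one-sided limits at $y=0$ (right limit $(b_0)^+=0$ because $b_0\leq 0$, left limit $-a_0$) exhibit a jump of size exactly $a_0$. Note that you in fact proved slightly more than the statement requires: continuity everywhere holds \emph{if and only if} $a_0=0$, whereas the lemma only asserts the ``if'' direction.
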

and
\begin{Lemma}
For $\lambda\geq 1$ we have for $l,m\geq 0$
\begin{equation}
a(\lambda l, \overline{S}_t)\leq \lambda a(l, \overline{S}_t),  \ \ b(\lambda m, \underline{S}_t)\geq \lambda 
b(m, \underline{S}_t).
\end{equation}
\end{Lemma}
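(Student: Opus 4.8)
The plan is to use directly the concavity of $l\mapsto a_t(l)$ and the convexity of $m\mapsto b_t(m)$ granted by assumptions (a4) and (a2), together with the sign conditions $a_0\geq 0$ and $b_0\leq 0$ built into the model. The key observation is that for $\lambda\geq 1$ the point $l$ is a convex combination of $0$ and $\lambda l$, namely $l=\tfrac{1}{\lambda}(\lambda l)+\bigl(1-\tfrac{1}{\lambda}\bigr)\cdot 0$, with weight $\tfrac1\lambda\in(0,1]$.

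First I would apply concavity of $a_t$ at this convex combination to obtain
\[
a_t(l)\geq \tfrac{1}{\lambda}\,a_t(\lambda l)+\bigl(1-\tfrac{1}{\lambda}\bigr)\,a_t(0).
\]
Recalling from \eqref{notat} that $a_t(0)=a_0$, multiplying through by $\lambda\geq 0$ yields $\lambda a_t(l)\geq a_t(\lambda l)+(\lambda-1)a_0$. Here the condition $a_0\geq 0$ enters: since $\lambda\geq 1$ the residual term $(\lambda-1)a_0$ is nonnegative, so it may be dropped to give exactly $a(\lambda l,\overline{S}_t)\leq \lambda\, a(l,\overline{S}_t)$.

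The argument for $b_t$ is the mirror image, using convexity in place of concavity. Writing $m=\tfrac{1}{\lambda}(\lambda m)+\bigl(1-\tfrac{1}{\lambda}\bigr)\cdot 0$ and invoking convexity of $b_t$ gives
\[
b_t(m)\leq \tfrac{1}{\lambda}\,b_t(\lambda m)+\bigl(1-\tfrac{1}{\lambda}\bigr)\,b_0,
\]
whence $\lambda b_t(m)\leq b_t(\lambda m)+(\lambda-1)b_0$. Now the sign condition $b_0\leq 0$ makes $(\lambda-1)b_0\leq 0$, and dropping this term gives $b(\lambda m,\underline{S}_t)\geq \lambda\, b(m,\underline{S}_t)$.

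There is no substantial obstacle here; the only point requiring care is that the fixed costs need not vanish at the origin, so $a_t(0)=a_0$ and $b_t(0)=b_0$ are generally nonzero. The sign assumptions $a_0\geq 0$ and $b_0\leq 0$ are precisely what is needed to absorb the residual terms $(\lambda-1)a_0$ and $(\lambda-1)b_0$ in the correct direction; without them this sublinear/superlinear scaling would fail, which is why I would highlight exactly where each sign is used.
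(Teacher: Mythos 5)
Your proof is correct and coincides with the paper's own argument: both apply concavity (resp.\ convexity) at the convex combination $l=\tfrac{1}{\lambda}(\lambda l)+\bigl(1-\tfrac{1}{\lambda}\bigr)\cdot 0$ and then discard the residual term $(\lambda-1)a_0$ (resp.\ $(\lambda-1)b_0$) using the sign conditions $a_0\geq 0$ and $b_0\leq 0$. Your explicit remark on where each sign condition is used is a welcome clarification but does not change the substance.
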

\begin{proof}
By concavity we have
$ a(l, \overline{S}_t)\geq {1\over \lambda} a(\lambda l, \overline{S}_t) + (1-{1\over \lambda})a(0, 
\overline{S}_t)$.
Therefore
$ a(\lambda l, \overline{S}_t)\leq \lambda a(l, \overline{S}_t) - (\lambda - 1)a_0 \leq \lambda a(l, 
\overline{S}_t)$.
Similarly by convexity
$b(m, \underline{S}_t)\leq {1\over \lambda} b(\lambda m, \underline{S}_t) + (1-{1\over \lambda})b(0, 
\underline{S}_t)$
and taking into account that $b_0\leq 0$,
$b(\lambda m, \underline{S}_t)\geq \lambda b(m, \underline{S}_t) - (\lambda - 1) b_0 \geq\lambda b(m, 
\underline{S}_t)$,
which completes the proof.
\end{proof}
Using the last Lemma we obtain
\begin{Corollary}\label{cor1.8}
For $\lambda \geq 1$ we have
\begin{equation}
L_t(\lambda(x,y))\geq \lambda L_t(x,y) \ \  and \ \ \lambda G_t\subset G_t.
\end{equation}
\end{Corollary}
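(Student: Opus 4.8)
The plan is to establish the pointwise inequality $L_t(\lambda(x,y)) \geq \lambda L_t(x,y)$ first, since the inclusion $\lambda G_t \subset G_t$ then follows at once from the definition \eqref{solv}: if $(x,y) \in G_t$ then $L_t(x,y) \geq 0$, and because $\lambda \geq 1 > 0$ the inequality yields $L_t(\lambda(x,y)) \geq \lambda L_t(x,y) \geq 0$, so $\lambda(x,y) \in G_t$.

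To prove the inequality I would split according to the sign of $y$, using the representation \eqref{liquid} and the fact that $\lambda y$ has the same sign as $y$. When $y < 0$ we have $L_t(\lambda x, \lambda y) = \lambda x - a(\lambda(-y), \overline{S}_t)$, and applying the last Lemma with $l = -y \geq 0$ gives $a(\lambda(-y), \overline{S}_t) \leq \lambda a(-y, \overline{S}_t)$, whence $L_t(\lambda x, \lambda y) \geq \lambda(x - a(-y, \overline{S}_t)) = \lambda L_t(x,y)$.

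When $y \geq 0$ the term $L_t(\lambda x, \lambda y) = \lambda x + (b(\lambda y, \underline{S}_t))^+$ carries a positive part, and here the one step requiring care is to pass from the Lemma's estimate $b(\lambda y, \underline{S}_t) \geq \lambda b(y, \underline{S}_t)$ to the corresponding estimate for the positive parts, $(b(\lambda y, \underline{S}_t))^+ \geq \lambda (b(y, \underline{S}_t))^+$. I would handle this by distinguishing whether $b(y, \underline{S}_t)$ is nonnegative: if $b(y, \underline{S}_t) \geq 0$ then $\lambda (b(y, \underline{S}_t))^+ = \lambda b(y, \underline{S}_t) \leq b(\lambda y, \underline{S}_t) = (b(\lambda y, \underline{S}_t))^+$, the last equality holding because $b(\lambda y, \underline{S}_t) \geq \lambda b(y, \underline{S}_t) \geq 0$; whereas if $b(y, \underline{S}_t) < 0$ then $\lambda(b(y, \underline{S}_t))^+ = 0 \leq (b(\lambda y, \underline{S}_t))^+$ trivially. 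In either subcase $L_t(\lambda x, \lambda y) = \lambda x + (b(\lambda y, \underline{S}_t))^+ \geq \lambda x + \lambda(b(y, \underline{S}_t))^+ = \lambda L_t(x,y)$, which completes the inequality and hence both assertions. The only genuine obstacle is the positive-part passage just described; every other step is an immediate substitution of the last Lemma into \eqref{liquid}.
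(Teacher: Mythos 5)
Your proof is correct and follows essentially the same route as the paper, which derives Corollary \ref{cor1.8} directly from the preceding Lemma's estimates $a(\lambda l,\overline{S}_t)\leq \lambda a(l,\overline{S}_t)$ and $b(\lambda m,\underline{S}_t)\geq \lambda b(m,\underline{S}_t)$ applied to the two branches of \eqref{liquid}. The paper leaves these substitutions implicit, and your careful treatment of the positive-part step $(b(\lambda y,\underline{S}_t))^+\geq \lambda (b(y,\underline{S}_t))^+$ correctly fills in the one detail the paper's one-line derivation glosses over.
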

Markets with proportional transaction costs have been studied intensively in a number of papers (see e.g. \cite{KS}, \cite{Zh} and  references therein).  The case 
of illiquid prices depending on the volume of transaction has appeared first in the papers \cite{CJP} and \cite{UR} and the idea was continued in the papers \cite{P1}-\cite{P3}. The
form of transaction costs depends on the impact of the investor on
the market and therefore on his ability to obtain lower costs. In the case of small investor bid 
price increases with the 
volume of transaction while ask price decreases. Therefore we have concave transaction costs. Such situation is frequent in particular in the case of currency or real estate markets. In the case of large investors, large sales diminish bid price and large purchases increase ask price. This leads to convex transaction costs. In the paper we study concave transaction costs. We also consider so called broker's fees composed of fixed plus concave transaction costs. The case of convex transaction costs was studied in the papers \cite{GM}, \cite{P1}-\cite{P3} and \cite{MSXZ}. Fixed plus proportional transaction costs were considered in \cite{HH}. Convex transaction costs appeared in \cite{DTP}. The case of general transaction costs was considered in the papers \cite{EL}, \cite{LT1} and \cite{LT2}. In the last paper there are some gaps which are now corrected in section 3 in which we follows methodology of section 6 of \cite{LT2} .  In the paper, the first asset is the discounted value of a
non risky asset, i.e. $S_0 = 1$ for the cash financial position $x$ while only one
risky asset defines the risky position $y$ of any investment $(x, y)$.
 Sections 2 and 3 may be easily extended to the case of several risky assets.
In section 4 and 5 we introduce so called asymptotic arbitrage, which is useful to study markets with concave transaction costs. Namely we show that arbitrage on the market with proportional transaction costs implies an asymptotic arbitrage on the market with concave transaction costs.         

\section{Absence of arbitrage}

We can easily notice that $-G_t$ forms the set of positions which we can achieve starting at time $t$ from 
position $(0,0)$.
A portfolio is by definition a stochastic process
which is adapted to the filtration $({\cal F}_t)_{t\leq T}$ where ${\cal F}_t$  describes the information
available at time $t$ or before. That means that at time $t$ any trade is chosen
as a function of the information ${\cal F}_t$. 
 We shall denote by
$L^0(-G_t,{\cal F}_t)$ the set of all $-G_t$ valued ${\cal F}_t$ measurable random variables. Given initial 
position $(x,y)$ we can
choose initial portfolio $V_0=(x,y)+(\xi_0,\zeta_0)$ where $(\xi_0,\zeta_0)\in L^0(-G_0,{\cal F}_0)$, and by 
induction
$V_{t+1}=V_t+(\xi_{t+1},\zeta_{t+1})$, where $(\xi_{t+1},\zeta_{t+1})\in L^0(-G_{t+1},{\cal F}_{t+1})$ at time 
$t+1$, for
$t=0,1,\ldots$.
Such portfolio is self financing since it does not require external transfer of capital.
Let
\begin{equation}\label{r01}
R^0_T=\sum_{t=0}^T L^0(-G_{t},{\cal F}_{t}).
\end{equation}
It is the set of all possible final values of portfolio at time $T$ when we start with initial position $(0,0)$ and use self financing strategy.
Let
\begin{equation}\label{r02}
LV^0_T=\left\{L_T(V_T): V_T\in R^0_T\right\}.
\end{equation}
Market with concave transaction costs satisfies {\it absence of arbitrage condition} (NA), when $LV^0_T \cap L^0(\mathbb{R}_+,{\cal F}_T)=\left\{0\right\}$, where $L^0(\mathbb{R}_+,{\cal F}_T)$ is the set of $\mathbb{R}_+$ valued $({\cal F}_T)$ measurable random variables.
We can also say that we have an {\it arbitrage} (A) on the market with concave transaction costs whenever there is $\zeta_T\in R^0_T$ such that $L_T(\zeta_T)\geq 0$, $\mathbb{P}$ a.e. and $\mathbb{P}\left\{L_T(\zeta_T)>0\right\}>0$ where $\left\{0\right\}$ stands for a random variable that is  $\mathbb{P}$ a.e. equal to $0$.
In other words $\zeta_T\in G_T$, $\mathbb{P}$ a.e. and 
$\mathbb{P}\left\{\zeta_T\in G_T^0\right\}>0$.
 In the case when there is $V_T^n=\sum_{t=0}^T \xi_t^n$ such that
 $\lim_{n\to \infty}\mathbb{P}\left\{\forall_t  \ \xi_t^n\in -G_t\right\}=1$  and $\liminf_{n\to \infty}\mathbb{P}\left\{L_T(V_T^n)\geq 0\right\}=1$ and also $\liminf_{n\to \infty}\mathbb{P}\left\{L_T(V_T^n) >0\right\}>0$ we say that we have an {\it asymptotic arbitrage} (AA). It is clear that (A) implies (AA). 

Let
\begin{equation}
A^0_T=\left\{\sum_{i=1}^n \lambda_i X_i,  \ where \ \lambda_i\geq 0 \ and \ X_i\in LV^0_T\right\}.
\end{equation}
Clearly $A^0_T$ is a convex cone in $L^0(\mathbb{R},{\cal F}_T)$. Following  Lemma 3.2 of \cite{LT2} we have 
that
\begin{equation}
A^0_T=\left\{\lambda X, \ where \ \lambda \in [0,1] \ and \ X\in LV^0_T\right\}.
\end{equation}
We have
\begin{Lemma}\label{NAequi} Conditions $LV^0_T \cap L^0(\mathbb{R}_+,{\cal F}_T)=\left\{0\right\}$ and $A^0_T \cap L^0(\mathbb{R}_+,{\cal F}_T)=\left\{0\right\}$ are equivalent.
\end{Lemma}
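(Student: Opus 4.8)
The plan is to prove the two implications separately, the reverse one being immediate from an inclusion and the forward one resting on the representation of $A^0_T$ recorded just above the statement.

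First I would observe that $LV^0_T \subseteq A^0_T$: taking $n=1$, $\lambda_1 = 1$ and $X_1 = X$ in the definition of $A^0_T$ shows every $X \in LV^0_T$ lies in $A^0_T$. Hence $LV^0_T \cap L^0(\mathbb{R}_+,{\cal F}_T) \subseteq A^0_T \cap L^0(\mathbb{R}_+,{\cal F}_T)$, which immediately yields the implication ``$A^0_T \cap L^0(\mathbb{R}_+,{\cal F}_T) = \{0\}$ $\Rightarrow$ $LV^0_T \cap L^0(\mathbb{R}_+,{\cal F}_T) = \{0\}$''.

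For the converse I would assume $LV^0_T \cap L^0(\mathbb{R}_+,{\cal F}_T) = \{0\}$ and take an arbitrary $Y \in A^0_T \cap L^0(\mathbb{R}_+,{\cal F}_T)$, aiming to show $Y = 0$. Using the representation $A^0_T = \{\lambda X : \lambda \in [0,1],\ X \in LV^0_T\}$, I would write $Y = \lambda X$ with $\lambda \in [0,1]$ and $X \in LV^0_T$, and then split according to the value of $\lambda$. When $\lambda = 0$ we get $Y = 0$ directly. When $\lambda \in (0,1]$, dividing by the scalar $\lambda$ is harmless, and from $Y \geq 0$ we obtain $X = \lambda^{-1} Y \geq 0$; thus $X \in LV^0_T \cap L^0(\mathbb{R}_+,{\cal F}_T)$, so $X = 0$ by hypothesis and therefore $Y = \lambda X = 0$.

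The step I expect to carry the real weight is the use of the representation $A^0_T = \{\lambda X : \lambda \in [0,1],\ X \in LV^0_T\}$: this is precisely what collapses a general conic combination $\sum_i \lambda_i X_i$ back to a single scaled element of $LV^0_T$ and lets the positivity argument go through. That representation is not free; it leans on $LV^0_T$ being stable under addition (through $G_t + G_t \subseteq G_t$ together with the superadditivity \eqref{liqin} of $L_T$) and under scaling up (through Corollary \ref{cor1.8}). The one point requiring genuine care is the degenerate case $\lambda = 0$, which must be isolated before any division, together with checking that $\lambda$ enters as a nonnegative scalar so that the sign constraint $Y \geq 0$ indeed transfers to $X \geq 0$; everything else is routine.
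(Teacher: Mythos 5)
Your proof is correct and follows essentially the same route as the paper: one direction via the inclusion $LV^0_T\subset A^0_T$, the other via the representation $A^0_T=\left\{\lambda X:\ \lambda\in[0,1],\ X\in LV^0_T\right\}$ to reduce a positive element of $A^0_T$ to a positive element of $LV^0_T$. Your explicit separation of the case $\lambda=0$ is in fact slightly more careful than the paper's version, which passes from $\lambda X\geq 0$ to $X\geq 0$ without comment (there it is implicitly justified because $\mathbb{P}\{\lambda X>0\}>0$ forces $\lambda>0$).
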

\begin{proof}
Clearly $LV^0_T\subset A^0_T$, so that $A^0_T \cap L^0(\mathbb{R}_+,{\cal F}_T)=\left\{0\right\}$ implies (NA). 
Assume that there is
$X\in LV^0_T$ such that $\lambda X\geq 0$, $\mathbb{P}$ a.e. and $\mathbb{P}\left\{\lambda X>0\right\}>0$ for 
some $\lambda\in [0,1]$.
Therefore also $X\geq 0$, $\mathbb{P}$ a.e. and $\mathbb{P}\left\{X>0\right\}>0$, which means that (NA) does 
not hold.
\end{proof}
Let
\begin{equation}\label{K set}
K_t=conv G_t,
\end{equation}
where conv stays for a convex hull. One can notice that
$\bar{K}_t$ is a closed cone with boundaries $y={-x \over \underline{S}_t(\infty)}$ for $x\leq 0$ and $y={-x 
\over
\overline{S}_t(\infty)}$ for $x\geq 0$. Such cone corresponds to solvent positions on the market with 
proportional transaction costs
with bid price $\underline{S}_t(\infty)$ and ask price $\overline{S}_t(\infty)$. The following graph shows the boundaries of the sets $G_t$ and $K_t$

\setlength{\unitlength}{0.5 cm}
\begin{picture}(8,12)(-14,-4)
\put(-5,0){\vector(1,0){15}}
\put(3,-0.5){$a_{0}$}
\put(0,-4){\vector(0,1){11.5}}
\put(0,0){\line(-2,3){5}}
\put(0,2){\line(-2,3){3.7}}
\put(0,0){\line(3,-1){10}}
\put(3,0){\line(3,-1){7}}
\put(0.2,2)
{$-\frac{b_0}{\underline{S}_t(\infty)}$}
\put(-4,1)
{$y=\frac{-x}{\underline{S}_t(\infty)}$}
\put(-6.25,2.9)
{$y=\frac{-x-b_0}{\underline{S}_t(\infty)}$}
\put(-4.6, 3){\oval(4.3, 1.5)}
\put(-2.6,3.5){\vector(3,2){1}}
\put(1.5,-2)
{$y=\frac{-x}{\overline{S}_t(\infty)}$}
\put(4.5,-3.6)
{$y=\frac{-x+a_0}{\overline{S}_t(\infty)}$}
\put(6.2, -3.5){\oval(4.3, 1.5)}
\put(7.5,-2.75){\vector(3,2){1.2}}
\put(0.2,3)
{$b^{-1}_t(0)$}
\put(-2.1,7.3)
{$y=b^{-1}_t(-x)$}
\put(7.8,-0.75)
{$y=-a^{-1}_t(x)$}

\qbezier(0,3)(-0.8,5.2)
(-2.5,7.5)
\qbezier(3,0)(4, 0.01)
(10,-1.5)

\end{picture}

Let
\begin{equation}
{\bar{R}}^0_T=\sum_{t=0}^T L^0(-\bar{K}_{t},{\cal F}_{t}).
\end{equation}
It is the set of final positions when we use self financing strategies on the market with bid 
$\underline{S}_t(\infty)$ and ask
$\overline{S}_t(\infty)$ prices. Let
\begin{equation}
{\tilde{R}}^0_T=\sum_{t=0}^T L^0(-{K}_{t},{\cal F}_{t}).
\end{equation}

Define $\bar{L}_t(x,y):=\sup\left\{\alpha: (x-\alpha,y)\in \bar{K}_t\right\}$. Clearly we have that
$\bar{L}_t(x,y):=\sup\left\{\alpha: (x-\alpha,y)\in {K}_t\right\}$ and
\begin{equation}
\bar{L}_t(x,y)=\mathbbm{1}_{y\geq 0} \left(x+y \underline{S}_t(\infty)\right) + \mathbbm{1}_{y<0} \left(x+y
\overline{S}_t(\infty)\right)=x+\bar{L}_t(0,y).
\end{equation}
Let
\begin{equation}
\widebar{LV}^0_T=\left\{\bar{L}_T(V_T): V_T\in \bar{R}^0_T\right\}
\end{equation}
and
\begin{equation}
\widetilde{LV}^0_T=\left\{\bar{L}_T(V_T): V_T\in \tilde{R}^0_T\right\}.
\end{equation}
Clearly $\widebar{LV}^0_T$ and $\widetilde{LV}^0_T$ are cones in $L^0(\mathbb{R},{\cal F}_T)$ and  
$\widetilde{LV}^0_T\subset
\widebar{LV}^0_T$. Furthermore, $L_t(x,y)\leq \widebar{L}_t(x,y)$. {\it Absence of arbitrage} $(\widebar{NA})$ on the market 
with bid
$\underline{S}_t(\infty)$ and ask $\overline{S}_t(\infty)$ prices means that $\widebar{LV}^0_T \cap 
L^0(\mathbb{R}_+,{\cal
F}_T)=\left\{0\right\}$. An analog $(\widetilde{NA})$ of $(\widebar{NA})$ in the case of cones $K_t$ is in the form
$\widetilde{LV}^0_T \cap L^0(\mathbb{R}_+,{\cal F}_T)=\left\{0\right\}$. Similarly we denote by $(\widetilde{A})$ 
or $(\widebar{A})$ an
arbitrage on the market with solvent sets $K_t$ or $\bar{K}_t$ respectively, which in what follows will be called  $(K_t)$ or $(\bar{K}_t)$ markets. Since $\widetilde{LV}^0_T \subset 
\widebar{LV}^0_T$ we 
clearly have that $(\widebar{NA})$ implies $(\widetilde{NA})$. Furthermore each $V_T\in \bar{R}^0_T$ can be approximated by $V_T^n\in 
\tilde{R}^0_T$ such that $V_T^n\to V_T$ and $\bar{L}_T(V_T^n) \to \bar{L}_T(V_T)$, $\mathbb{P}$ a.e. as $n\to \infty$.
Therefore we have
\begin{Lemma}
(A) implies $(\widetilde{A})$ and also $(\widebar{A})$. 
\end{Lemma}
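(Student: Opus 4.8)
The plan is to exploit two facts already established in the excerpt: the nesting of solvency sets $G_t \subset K_t \subset \bar{K}_t$, which produces a corresponding nesting of the attainable sets, together with the pointwise domination of liquidation functionals $L_t \le \bar{L}_t$. Combined, these allow an arbitrage witness for the concave-cost $(G_t)$ market to be reused verbatim as an arbitrage witness for the coarser $(K_t)$ and $(\bar{K}_t)$ markets, so no witness needs to be reconstructed.

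First I would record the inclusions of attainable sets. Since $G_t \subset K_t = \conv G_t \subset \bar{K}_t$, we have $-G_t \subset -K_t \subset -\bar{K}_t$, and hence $L^0(-G_t,{\cal F}_t) \subset L^0(-K_t,{\cal F}_t) \subset L^0(-\bar{K}_t,{\cal F}_t)$ for every $t$. Summing over $t=0,\dots,T$ yields $R^0_T \subset \tilde{R}^0_T \subset \bar{R}^0_T$. Thus any self-financing terminal wealth attainable in the $(G_t)$ market is also attainable in both the $(K_t)$ and $(\bar{K}_t)$ markets.

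Next, suppose (A) holds: there is $\zeta_T \in R^0_T$ with $L_T(\zeta_T) \ge 0$, $\mathbb{P}$ a.e., and $\mathbb{P}\{L_T(\zeta_T)>0\}>0$. By the first step $\zeta_T \in \tilde{R}^0_T \subset \bar{R}^0_T$, so $\zeta_T$ is a legitimate terminal position for both coarser markets. Using $L_T \le \bar{L}_T$ we obtain $\bar{L}_T(\zeta_T) \ge L_T(\zeta_T) \ge 0$, $\mathbb{P}$ a.e., and on the event $\{L_T(\zeta_T)>0\}$ we have $\bar{L}_T(\zeta_T) \ge L_T(\zeta_T) > 0$; consequently $\mathbb{P}\{\bar{L}_T(\zeta_T)>0\} \ge \mathbb{P}\{L_T(\zeta_T)>0\} > 0$. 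Since $\bar{L}_T$ is precisely the liquidation functional used to define $\widetilde{LV}^0_T$ and $\widebar{LV}^0_T$, the same $\zeta_T$ simultaneously witnesses $(\widetilde{A})$ and $(\widebar{A})$.

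I expect essentially no obstacle here, as the argument is a pure monotonicity transfer requiring no new measurability or approximation work. The one point worth checking carefully is that the liquidation functional attached to the $(K_t)$ and $(\bar{K}_t)$ markets in the definitions of $\widetilde{LV}^0_T$ and $\widebar{LV}^0_T$ is $\bar{L}_T$ rather than $L_T$, so that the inequality $L_T \le \bar{L}_T$ is exactly what preserves both the a.e. nonnegativity and the strict-positivity event when passing the witness to the coarser markets.
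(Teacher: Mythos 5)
Your proof is correct and follows essentially the same route as the paper: the inclusions $G_t\subset K_t\subset \bar{K}_t$ give $R^0_T\subset \tilde{R}^0_T\subset \bar{R}^0_T$, and the same witness $\zeta_T$ is reused for the coarser markets. The only difference is cosmetic -- the paper transfers the arbitrage via set membership ($\zeta_T\in G_T$ a.e.\ with $\zeta_T\in G_T^0$ on a set of positive probability, hence a $(K_t)$- and $(\bar K_t)$-arbitrage), while you make explicit the equivalent inequality $L_T\leq \bar{L}_T$, which in fact spells out the step the paper's ``clearly'' leaves implicit.
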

\begin{proof}
It is clear that $G_t\subset K_t$. Therefore $ R_T^0 \subset \tilde{R}_T^0$. If $\zeta_T\in R_T^0$ and $\zeta_T$ is non zero random variable and takes values in $G_T$, then we clearly have an arbitrage on the market with solvent sets $(K_t)$ and consequently also on the market with solvent sets $(\bar{K}_t)$.
\end{proof} 
\begin{Remark} Inverse implications are not necessary true. We shall prove later a result that $(\widebar{A})$ implies (AA).
\end{Remark}   
Using some ideas of the Lemma 3.4 of  \cite{LT2} we have the following equivalences
\begin{Lemma}\label{equival}  We have
\begin{enumerate}
\item[(a)] under assumption $a_0=0$ the property (NA) is equivalent to $R^0_T \cap L^0(G_T^0\cup 
    \left\{0\right\},{\cal
    F}_T)=\left\{0\right\}$; moreover if $a_0\neq 0$ we have only the implication: from (NA) it follows that  $R^0_T \cap 
    L^0(G_T^0\cup
    \left\{0\right\},{\cal F}_T)=\left\{0\right\}$,
\item[(b)] $(\widebar{NA})$ is equivalent to $\bar{R}^0_T \cap L^0(K_T,{\cal F}_T)=\left\{0\right\}$,
\item[(c)] $(\widetilde{NA})$ is equivalent to $\tilde{R}^0_T \cap L^0(K_T,{\cal F}_T)=\left\{0\right\}$.
\end{enumerate}
\end{Lemma}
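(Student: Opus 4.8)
The plan is to prove each of the three equivalences by exploiting the relation between the liquidation functions and the solvency sets, together with the superadditivity/monotonicity properties established in the earlier lemmas. The key observation throughout is that $\zeta_T\in R^0_T$ (respectively $\bar R^0_T$, $\tilde R^0_T$) means $\zeta_T$ is a terminal portfolio value built by self-financing trades, and that liquidating such a portfolio amounts to asking whether $\zeta_T$ lies in the relevant solvency set. I would treat part (b) and (c) together since they are the convex-cone cases and are genuinely simpler, then handle the concave case (a) with more care because of the role of $a_0$.

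**For parts (b) and (c):** First I would note that $\bar L_T(V_T)\ge 0$ is, by the definition $\bar L_t(x,y)=\sup\{\alpha:(x-\alpha,y)\in\bar K_t\}$, exactly the statement $V_T\in\bar K_T$; and since $\bar K_T$ is the closure of $K_T$ and both yield the same $\bar L_T$, the distinction between $K_T$ and $\bar K_T$ collapses at the level of $\bar L_T$. So to prove $(\widebar{NA})\iff \bar R^0_T\cap L^0(K_T,{\cal F}_T)=\{0\}$, I would argue both directions: if there is a nonzero $\zeta_T\in\bar R^0_T$ with $\zeta_T\in K_T$ a.e., then $\bar L_T(\zeta_T)\ge 0$ with positive probability of being positive (using that $K_T$ strictly contains its recession cone in the relevant directions, so membership in $K_T$ gives nonnegative liquidation value), contradicting $(\widebar{NA})$; conversely a violation of $(\widebar{NA})$ produces $V_T\in\bar R^0_T$ with $\bar L_T(V_T)\ge 0$, and then $V_T+(\text{liquidation shift})$ can be rewritten as an element of $\bar R^0_T$ lying in $K_T$. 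The case (c) is identical with $\tilde R^0_T$ replacing $\bar R^0_T$, using $\widetilde{LV}^0_T\subset\widebar{LV}^0_T$ and the approximation remark $V_T^n\to V_T$ with $\bar L_T(V_T^n)\to\bar L_T(V_T)$ already stated in the excerpt.

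**For part (a):** The plan is to show $\zeta_T\in R^0_T$ liquidates to something nonnegative and nonzero precisely when, after adding an ${\cal F}_T$-measurable liquidation trade from $-G_T$, one obtains a final position in $G_T^0\cup\{0\}$. Here I would use Corollary~\ref{zeronpart} decisively: when $a_0=0$ the boundary $\partial G_T$ is exactly $\{L_T=0\}$, so $L_T(\zeta_T)\ge 0$ with $\mathbb P\{L_T(\zeta_T)>0\}>0$ translates cleanly into $\zeta_T$ (or its liquidated shift) lying in $G_T^0\cup\{0\}$, using $G_T+G_T\subset G_T$ from \eqref{gcont} to keep the shifted portfolio admissible in $R^0_T$. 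When $a_0\ne 0$, the subtlety flagged in Corollary~\ref{zeronpart} — that $L_T(x,0)>0$ for $x\in(0,a_0]$ even though $(x,0)$ may sit on $\partial G_T$ — breaks the reverse implication, so only one direction survives, and I would verify that the forward implication (from (NA) to the intersection being $\{0\}$) still goes through because a nonzero element of $R^0_T\cap L^0(G_T^0\cup\{0\},{\cal F}_T)$ has strictly positive liquidation value on a positive-probability set, directly contradicting (NA).

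**The main obstacle** I anticipate is precisely the $a_0\ne 0$ asymmetry in part (a): the fixed transaction cost destroys the clean correspondence between the interior $G_T^0$ and strict positivity of the liquidation value, because one can be solvent with a strictly positive liquidation value at points that are not interior (the segment $(0,a_0]\times\{0\}$). I would therefore be careful to formulate the interior-membership argument so that it only uses the implication that does survive, and to invoke \eqref{gcont} and Corollary~\ref{cor1.8} to guarantee that the self-financing structure is preserved when I fold the final liquidation step back into the portfolio as an additional trade in $-G_T$.
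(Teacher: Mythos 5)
Your part (a) is essentially the paper's own proof: the forward direction is the same one-line argument, and your converse --- fold the liquidation trade $-\zeta_T\mathbbm{1}_{\{L_T(\zeta_T)=0\}}\in L^0(-G_T,{\cal F}_T)$ into the time-$T$ transfer (admissible by \eqref{gcont}), obtain a portfolio valued in $G_T^0\cup\{0\}$, conclude it vanishes, and invoke Corollary \ref{zeronpart} with $a_0=0$ on the boundary event --- is exactly the paper's construction $\bar{\xi}_T=(\xi_T-\xi)\mathbbm{1}_{B_T}+\xi_T\mathbbm{1}_{B_T^c}$, including your diagnosis of why only one implication survives when $a_0\neq 0$. (Corollary \ref{cor1.8} is not needed for this; \eqref{gcont} suffices.)

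In parts (b) and (c), however, there is a genuine gap, located exactly where you declare the matter settled. The claim that ``the distinction between $K_T$ and $\bar{K}_T$ collapses at the level of $\bar{L}_T$'' is true of the support function but fatal as a guide here, because the lemma is a statement about set membership, and the difference $\bar{K}_T\setminus K_T$ (the boundary rays) is the entire content: with $\bar{K}_T$ in place of $K_T$, condition (b) becomes $\bar{R}^0_T\cap L^0(\bar{K}_T,{\cal F}_T)=\{0\}$, which is \eqref{tA.1} at $t=T$ and strictly stronger than $(\widebar{NA})$ --- in a deterministic two-period market with $\underline{S}_1(\infty)=\overline{S}_0(\infty)$, buying one asset at time $0$ and holding it produces a nonzero element of $\bar{R}^0_1\cap L^0(\bar{K}_1,{\cal F}_1)$ although $(\widebar{NA})$ holds. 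What your forward direction actually needs, and never establishes, is the geometric fact that $K_T=\conv G_T$ meets $\{\bar{L}_T=0\}$ only at the origin, i.e.\ $K_T\setminus\{0\}\subset\{\bar{L}_T>0\}$; this is what the paper's remark following the lemma is pointing at, and it holds because the linear functionals $x+y\underline{S}_T(\infty)$ and $x+y\overline{S}_T(\infty)$ are nonnegative on $G_T$ and vanish only at the origin (when the curves are strictly monotone or there are fixed costs), a property inherited by the convex hull. Your parenthetical justification delivers only $\bar{L}_T\geq 0$, which is trivial from $K_T\subset\bar{K}_T$ and yields no contradiction with $(\widebar{NA})$.

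The converse direction of (b) contains a second, concrete misstep: the ``liquidation shift'' $V_T-\bar{L}_T(V_T)e_1$ lands precisely on $\partial\bar{K}_T=\{\bar{L}_T=0\}$, hence (off the origin) \emph{outside} $K_T$, so the shifted portfolio is not $K_T$-valued. The repair is the device you already use in part (a): keep $V_T$ on $\{\bar{L}_T(V_T)>0\}$, where $V_T\in\{\bar{L}_T>0\}\subset K_T$, and cancel the position entirely on $\{\bar{L}_T(V_T)=0\}$ by adding $-V_T\mathbbm{1}_{\{\bar{L}_T(V_T)=0\}}\in L^0(-\bar{K}_T,{\cal F}_T)$ to the final trade. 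Note that this cancelling trade lies in $-\bar{K}_T$ but not necessarily in $-K_T$, so while the repair completes (b), in part (c), where trades must come from $-K_t$, the admissibility of the cancellation requires an extra argument --- a point the paper itself glosses over with ``the proof of (b) and (c) follows in a similar way.''
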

\begin{proof}
If $\xi \in R^0_T \cap L^0(G_T^0\cup \left\{0\right\},{\cal F}_T)$ then $L_T(\xi)\geq 0$ and under (NA) we have $L_T(\xi)=0$ and finally $\xi=0$.
Conversely, assume that $R^0_T \cap L^0(G_T^0\cup \left\{0\right\},{\cal F}_T)=\left\{0\right\}$.  When 
$L_T(\xi)\geq 0$ for $\xi
\in R^0_T$, then $\xi\in G_T$. Clearly $\xi=\sum_{i=0}^T \xi_i$, with $\xi_i\in -G_i$. We have that either 
$\xi\in G_T^0$ or 
$\xi\in\partial G_T$. Let $B_T=\left\{\xi\in \partial G_T\right\}$. Define $\bar{\xi}_T=(\xi_T-\xi)1_{B_T} + 
\xi_T 1_{B_T^c}\in -G_T$. 
Clearly
$0=\sum_{i=0}^{T-1}\xi_i+\bar{\xi}_T$ on $B_T$.  Moreover we have that 
$\tilde{\xi}:=\sum_{i=0}^{T-1}\xi_i+\bar{\xi}_T\in R^0_T$.
Notice that $\tilde{\xi}\in G_T^0\cup \left\{0\right\}\in R^0_T$ and since $R^0_T \cap L^0(G_T^0\cup 
\left\{0\right\},{\cal
F}_T)=\left\{0\right\}$ we have that $\tilde{\xi}=0$. Then $\tilde{\xi}=\xi=0$ on $B_T^c$. Since $L_T(\xi)=0$ 
on $B_T$ (because
$a_0=0$ by Corollary \ref{zeronpart}) we finally have that  $L_T(\xi)=0$ and $(NA)$ is satisfied.
The proof of (b) and (c) follows in a similar way.
\end{proof}
\begin{Remark} In the proof above we used Corollary \ref{zeronpart}. Notice that  $\bar{L}_T(\xi)=0$ for 
$\xi\in \partial K_T$.
\end{Remark}

\begin{Corollary}
We have that $(\widetilde{NA})$ under assumption $a_0=0$ implies (NA).
\end{Corollary}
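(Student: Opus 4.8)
The plan is to reduce everything to the reformulations of $(NA)$ and $(\widetilde{NA})$ supplied by Lemma \ref{equival}, and then to exploit the elementary inclusions between the reachable sets. Concretely, I would start from the characterization in Lemma \ref{equival}(c): $(\widetilde{NA})$ is equivalent to $\tilde{R}^0_T \cap L^0(K_T,{\cal F}_T)=\left\{0\right\}$. On the other side, since $a_0=0$, Lemma \ref{equival}(a) gives the full equivalence of $(NA)$ with $R^0_T \cap L^0(G_T^0\cup\left\{0\right\},{\cal F}_T)=\left\{0\right\}$; it is precisely here that the hypothesis $a_0=0$ enters, because for $a_0\neq 0$ only one implication of part (a) is available and it runs the wrong way for this argument.

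Next I would record the two set inclusions that connect the two descriptions. First, $G_t\subset K_t=\conv G_t$ for every $t$, hence $L^0(-G_t,{\cal F}_t)\subset L^0(-K_t,{\cal F}_t)$ and, summing over $t$, $R^0_T\subset \tilde{R}^0_T$. Second, since $0\in G_T$ (indeed $L_T(0,0)=(b_0)^+=0$ because $b_0\leq 0$) and $G_T^0\subset G_T\subset K_T$, we have $G_T^0\cup\left\{0\right\}\subset K_T$, so any ${\cal F}_T$-measurable random variable taking values in $G_T^0\cup\left\{0\right\}$ is in fact $K_T$-valued.

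With these in hand the conclusion follows by a short chase. Take any $\xi\in R^0_T \cap L^0(G_T^0\cup\left\{0\right\},{\cal F}_T)$. By the first inclusion $\xi\in \tilde{R}^0_T$, and by the second $\xi$ is $K_T$-valued, so $\xi\in \tilde{R}^0_T \cap L^0(K_T,{\cal F}_T)$. Assuming $(\widetilde{NA})$, Lemma \ref{equival}(c) forces this intersection to be $\left\{0\right\}$, whence $\xi=0$. Thus $R^0_T \cap L^0(G_T^0\cup\left\{0\right\},{\cal F}_T)=\left\{0\right\}$, and Lemma \ref{equival}(a), which is applicable because $a_0=0$, converts this back into $(NA)$.

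I do not expect a serious obstacle: the argument is purely a matter of assembling the equivalences of Lemma \ref{equival} with the inclusions $R^0_T\subset \tilde{R}^0_T$ and $G_T^0\cup\left\{0\right\}\subset K_T$. The only point demanding care is the role of $a_0=0$, which is needed so that the converse direction of Lemma \ref{equival}(a) is available, allowing the set-theoretic condition on $R^0_T$ to be upgraded to the full $(NA)$ property; without it one would only recover $R^0_T \cap L^0(G_T^0\cup\left\{0\right\},{\cal F}_T)=\left\{0\right\}$ rather than $(NA)$ itself.
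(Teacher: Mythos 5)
Your proposal is correct and follows essentially the same route as the paper: the paper's own proof consists exactly of noting $R^0_T\subset \tilde{R}^0_T$ and $G_T^0\cup\left\{0\right\}\subset K_T$, deducing $R^0_T \cap L^0(G_T^0\cup \left\{0\right\},{\cal F}_T)=\left\{0\right\}$ from $\tilde{R}^0_T \cap L^0(K_T,{\cal F}_T)=\left\{0\right\}$, and invoking Lemma \ref{equival}. Your additional remarks --- that part (c) of Lemma \ref{equival} reformulates $(\widetilde{NA})$ and that $a_0=0$ is needed precisely so that the converse direction of Lemma \ref{equival}(a) is available --- simply make explicit what the paper leaves implicit.
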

\begin{proof}
Since $R_T^0\subset \tilde{R}_T^0$ and $G_T^0\cup\left\{0\right\}\subset K_T$ whenever $\tilde{R}^0_T \cap 
L^0(K_T,{\cal
F}_T)=\left\{0\right\}$ then also $R^0_T \cap L^0(G_T^0\cup \left\{0\right\},{\cal F}_T)=\left\{0\right\}$ and 
we use Lemma
\ref{equival}.
\end{proof}

Consider the following conditions
\begin{equation}\label{tA.1} \tag{$\widebar{nA.1}$}
\bar{R}^0_T \cap L^0(\bar{K}_{t},{\cal F}_{t})=\left\{0\right\} \ for \  every  \ t=0,1,\ldots,T,
\end{equation}
and
\begin{equation}\label{tbA.1} \tag{$\widetilde{nA.1}$}
\tilde{R}^0_T \cap L^0({K}_{t},{\cal F}_{t})=\left\{0\right\} \ for \  every  \ t=0,1,\ldots,T.
\end{equation}
Following section 3.2.2. of \cite{KS} we have
\begin{Lemma} \label{lemp} (see also Lemma 3.27 of \cite{KS})
The following conditions are equivalent
\begin{enumerate}
\item[(a)] \eqref{tA.1}, i.e. $\bar{R}^0_T \cap L^0(\bar{K}_{t},{\cal F}_{t})=\left\{0\right\}$  for every $t=0,1,\ldots,T$,
\item[(b)] when $\sum_{i=0}^T \xi_i=0$ with $\xi_i \in L^0(\bar{K}_{i},{\cal F}_{i})$ then all $\xi_i=0$,
\item[(c)] for each $t\leq T$ we have $\bar{R}^0_t \cap L^0(\bar{K}_{t},{\cal F}_{t})=\left\{0\right\}$.
\end{enumerate}
\end{Lemma}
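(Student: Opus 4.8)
The plan is to treat (b) as the central condition and to establish the two equivalences (a)$\Leftrightarrow$(b) and (b)$\Leftrightarrow$(c) at once, exploiting three structural facts about the $\bar K_t$'s: each is a closed convex cone containing $0$, so $\bar K_t+\bar K_t\subset\bar K_t$; the nesting $\bar R^0_s\subset \bar R^0_{s'}$ holds for $s\le s'$ (one merely pads a representation with zero increments); and, crucially, by (a5) the cone $\bar K_t$ is \emph{pointed}, i.e. $\bar K_t\cap(-\bar K_t)=\{0\}$. This last point I would read off from the explicit boundary lines $y=-x/\underline{S}_t(\infty)$ and $y=-x/\overline{S}_t(\infty)$ together with the strict inequality $\underline{S}_t(\infty)<\overline{S}_t(\infty)$, and I expect it to be the crux of the whole argument: it is precisely what converts the information ``an increment lies in both $\bar K_t$ and $-\bar K_t$'' into ``that increment vanishes.''

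First I would prove (a)$\Rightarrow$(b) and (c)$\Rightarrow$(b) by one and the same descending induction on the top index. Suppose $\sum_{i=0}^T\xi_i=0$ with $\xi_i\in L^0(\bar K_i,{\cal F}_i)$. Then $\xi_T=\sum_{i=0}^{T-1}(-\xi_i)$ is a finite sum of ${\cal F}_i$-measurable terms with $-\xi_i\in-\bar K_i$, so $\xi_T\in\bar R^0_{T-1}\subset\bar R^0_T$ while simultaneously $\xi_T\in L^0(\bar K_T,{\cal F}_T)$. Under (a) taken at $t=T$, or under (c) taken at $s=T$ (using $\bar R^0_{T-1}\subset\bar R^0_T$), this forces $\xi_T=0$; one repeats with $\sum_{i=0}^{T-1}\xi_i=0$, and so on down to $\xi_0$. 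The delicate point to get right is that the induction only ever touches the current top index, so that exactly the hypotheses supplied by (a) and by (c) are the ones invoked at each stage. No measurable selection is needed, since every new element is written down explicitly.

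Conversely I would deduce both (b)$\Rightarrow$(a) and (b)$\Rightarrow$(c) from a single incorporation argument, and this is where pointedness enters. Take $\xi\in\bar R^0_N\cap L^0(\bar K_t,{\cal F}_t)$, where $N=T$ for (a) and $N=t$ for (c), and write $\xi=\sum_{i=0}^N\eta_i$ with $\eta_i\in L^0(-\bar K_i,{\cal F}_i)$, i.e. $-\eta_i\in\bar K_i$. Rearranging gives $\sum_{i=0}^N(-\eta_i)+\xi=0$, and since $\xi\in\bar K_t$ and $-\eta_t\in\bar K_t$ the combination $-\eta_t+\xi$ stays in $\bar K_t$; thus setting $\zeta_i:=-\eta_i$ for $i\ne t$ and $\zeta_t:=-\eta_t+\xi$ produces $\sum_{i=0}^N\zeta_i=0$ with $\zeta_i\in L^0(\bar K_i,{\cal F}_i)$. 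Applying (b) (after padding with zero terms up to $T$ when $N<t$ does not arise, i.e. when $N=t<T$) yields $\zeta_i=0$ for every $i$; in particular $\eta_i=0$ for $i\ne t$ and $\xi=\eta_t\in-\bar K_t$. Together with $\xi\in\bar K_t$, the pointedness $\bar K_t\cap(-\bar K_t)=\{0\}$ forces $\xi=0$, which is exactly the conclusion of (a), respectively (c). Combining these four implications gives (a)$\Leftrightarrow$(b)$\Leftrightarrow$(c), as claimed.
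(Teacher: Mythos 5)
Your proof is correct and follows essentially the same route as the paper's: you derive (b) from (a) and (c) by moving the top increment across the equation $\sum_i\xi_i=0$, and you derive (a) and (c) from (b) by folding the test element $\xi$ into the $t$-th increment and invoking the pointedness $\bar K_t\cap(-\bar K_t)=\{0\}$ coming from $\underline{S}_t(\infty)<\overline{S}_t(\infty)$, which is exactly the step the paper uses when it concludes from $\xi_t+\eta_t=0$ with $\xi_t,\eta_t\in\bar K_t$ that $\xi_t=\eta_t=0$. Your only deviation is organizational: a descending induction on the top index in place of the paper's random maximal index $\bar s$ in the (c)$\Rightarrow$(b) step, which is a slightly cleaner bookkeeping device but not a genuinely different argument.
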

\begin{proof} When $\sum_{i=0}^T \xi_i=0$ with $\xi_i \in L^0(\bar{K}_{i},{\cal F}_{i})$ and some $\xi_t\neq 
0$, then $\bar{R}^0_T\ni
-\sum_{{i=0}, i\neq t}^T \xi_i=\xi_t$ and under \eqref{tA.1} we have  $\xi_t=0$. Conversely under (b)
when  $-\sum_{i=0}^T \xi_i=\eta_t\in L^0(\bar{K}_{t},{\cal F}_{t})$ then $\bar{R}^0_T\ni 
-(\xi_t+\eta_t)-\sum_{{i=0}, i\neq t}^T
\xi_i=0$ and each $\xi_i=0$ for $i\neq t$ and $\xi_t+\eta_t=0$. Since $\xi_t\in \bar{K}_t$ and $\eta_t\in 
\bar{K}_t$ the last can
happen only when $\xi_t=0$ and $\eta_t=0$. Therefore we have (a). Now under (a) also (c) holds since 
$\bar{R}^0_t\subset \bar{R}^0_T$.
To complete the proof it remains to show that (c) implies (b). Assume that $\sum_{i=0}^T \xi_i=0$ with $\xi_i 
\in
L^0(\bar{K}_{i},{\cal F}_{i})$. Let $\bar{s}:=\max\left\{i: \xi_i\neq 0\right\}$ and $\bar{s}=\infty$ when 
$\xi_i=0$ for each $i\leq
T$. We have to consider the case $\bar{s}<\infty$. Then $-\sum_{i=0}^{\bar{s}-1}\xi_i=\xi_{\bar{s}}$ and since
$-\sum_{i=0}^{\bar{s}-1}\xi_i\in \bar{R}^0_{\bar{s}}$ by (c) we have that $\xi_{\bar{s}}=0$ which contradicts 
definition of $\bar{s}$.
Consequently $\bar{s}=\infty$ and (b) is satisfied.
\end{proof}
\begin{Lemma}\label{timpl}
We have that \eqref{tA.1} implies  $(\widebar{NA})$.
\end{Lemma}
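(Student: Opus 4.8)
The plan is to show the contrapositive: if $(\widebar{NA})$ fails, then \eqref{tA.1} fails as well. So suppose there is some $V_T=\sum_{t=0}^T \xi_t$ with $\xi_t\in L^0(-\bar{K}_t,{\cal F}_t)$ such that $\bar{L}_T(V_T)\geq 0$, $\mathbb{P}$ a.e., with $\mathbb{P}\{\bar{L}_T(V_T)>0\}>0$. The goal is to manufacture from this an element of $\bar{R}^0_T$ lying in some $L^0(\bar{K}_t,{\cal F}_t)\setminus\{0\}$, thereby violating \eqref{tA.1}. First I would use the fact that $\bar{L}_T(V_T)\geq 0$ means $V_T\in\bar{K}_T$, and then reabsorb the liquidation profit into the last trade: since $\bar{L}_T(V_T)\geq 0$ we may replace $\xi_T$ by $\xi_T-(\bar{L}_T(V_T),0)$, which still lies in $L^0(-\bar{K}_T,{\cal F}_T)$ because moving cash down keeps us in the cone, to obtain a final position on $\partial\bar{K}_T$, i.e.\ a net achievable position that is $\mathbb{P}$ a.e.\ in $\bar{K}_T$ and strictly in $\bar{K}_T^0\cup\{0\}$ with positive probability, which is what $(\widebar{A})$ amounts to.

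The key observation is that $\bar{K}_t$ is a \emph{cone}, so $-\bar{K}_t$ is closed under addition and $\bar{R}^0_T$ is a subspace-like sum of cones; in particular $-\bar{R}^0_T=\bar{R}^0_T$ as sets since each summand is a cone containing its own negatives' reflections only up to the cone structure---more precisely, if $V_T=\sum_t\xi_t\in\bar{K}_T$ with $\xi_t\in-\bar{K}_t$, then $-V_T=\sum_t(-\xi_t)$ need not lie in $\bar{R}^0_T$, so the right maneuver is different. Instead I would argue directly: set $\eta_t:=-\xi_t\in L^0(\bar{K}_t,{\cal F}_t)$, so that $\sum_{t=0}^T\eta_t=-V_T$. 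Since $V_T\in\bar{K}_T$ with $V_T\in\bar{K}_T^0$ on a set of positive probability, and since $\bar{K}_T$ is a cone containing $\mathbb{R}_+^2$, the element $W_T:=-V_T=\sum_{t=0}^T\eta_t$ can be written as $W_T=\eta_0+\cdots+\eta_{T-1}+(\eta_T+V_T)-V_T$; the clean way is to note $-\sum_{t=0}^{T-1}\xi_t=\xi_T+\,(V_T)$ gives an element of $\bar{R}^0_T$ equal to $\xi_T+V_T$, and since $\xi_T\in-\bar{K}_T$ and $V_T\in\bar{K}_T$ we land in $L^0(\bar{K}_T,{\cal F}_T)$ precisely when $V_T$ dominates $-\xi_T$ in the cone.

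The cleanest route, and the one I would actually carry out, uses Lemma \ref{lemp}: by part (b), \eqref{tA.1} is equivalent to the statement that $\sum_{t=0}^T\eta_t=0$ with $\eta_t\in L^0(\bar{K}_t,{\cal F}_t)$ forces all $\eta_t=0$. So I would assume \eqref{tA.1} and derive $(\widebar{NA})$ from it. Take any $V_T=\sum\xi_t\in\bar{R}^0_T$ with $\bar{L}_T(V_T)\geq 0$. Replacing $\xi_T$ by $\xi_T-(\bar{L}_T(V_T),0)\in-\bar{K}_T$ we may assume $\bar{L}_T(V_T)=0$, i.e.\ $V_T\in\partial\bar{K}_T$, hence $V_T\in\bar{K}_T$. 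Then $\eta_t:=-\xi_t\in\bar{K}_t$ for $t<T$ while $\eta_T:=-\xi_T=V_T-\sum_{t<T}\xi_t$; rearranging, $V_T+\sum_{t=0}^{T}(-\xi_t)$ is not yet of the required form, so instead write $0=(-V_T)+\sum_{t=0}^T\xi_t$ and observe $-V_T\in-\bar{K}_T$, giving $0=\sum_{t=0}^{T-1}\xi_t+(\xi_T-V_T)$ with every summand in the respective $-\bar{K}_t$; applying Lemma \ref{lemp}(b) to the cones $\bar{K}_t$ forces $\xi_t=0$ for $t<T$ and $\xi_T=V_T$, whence $V_T=\xi_T\in-\bar{K}_T\cap\bar{K}_T$. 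Because $\bar{K}_T$ is a cone whose lineality is trivial (its two bounding rays $y=-x/\underline{S}_T(\infty)$ and $y=-x/\overline{S}_T(\infty)$ meet only at the origin under assumption (a5), which gives $\underline{S}_T(\infty)<\overline{S}_T(\infty)$), we get $\bar{K}_T\cap(-\bar{K}_T)=\{0\}$, so $V_T=0$ and thus $\bar{L}_T(V_T)=0$, establishing $(\widebar{NA})$. The main obstacle is the bookkeeping in this last step: one must verify that the liquidation adjustment keeps $\xi_T$ in $-\bar{K}_T$ and that the triviality of the lineality space of $\bar{K}_T$ really follows from (a5); both are straightforward once the cone geometry from the figure is made precise.
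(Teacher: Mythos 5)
Your final argument is correct, but it takes a longer route than the paper. The paper proves the contrapositive in essentially two lines: if $(\widebar{NA})$ fails, there is $V_T=-\sum_{i=0}^T\xi_i\in\bar{R}^0_T$ with $\bar{L}_T(V_T)\geq 0$ a.e.\ and $\bar{L}_T(V_T)>0$ with positive probability; since $\bar{L}_T(V_T)\geq 0$ means exactly $V_T\in\bar{K}_T$, this $V_T$ lies in $\bar{R}^0_T\cap L^0(\bar{K}_T,{\cal F}_T)$ while being nonzero on a set of positive measure, which contradicts \eqref{tA.1} applied verbatim at $t=T$. You instead assume \eqref{tA.1}, pass to the equivalent condition (b) of Lemma \ref{lemp}, form the zero-sum decomposition $0=\sum_{t=0}^{T-1}(-\xi_t)+(V_T-\xi_T)$ with all summands in the respective $\bar{K}_t$, conclude $\xi_t=0$ for $t<T$ and $\xi_T=V_T$, and finish with pointedness $\bar{K}_T\cap(-\bar{K}_T)=\{0\}$, which indeed follows from (a5). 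This works, but it amounts to re-running the machinery inside Lemma \ref{lemp} (whose proof already uses the same pointedness fact) rather than noticing that \eqref{tA.1} applies directly to the terminal position. Two smaller points: your normalization step, replacing $\xi_T$ by $\xi_T-(\bar{L}_T(V_T),0)$, is superfluous, since $\bar{L}_T(V_T)\geq 0$ by itself already places $V_T$ in $\bar{K}_T$; worse, it introduces a slight conflation at the end, because what your decomposition then shows is that the \emph{normalized} portfolio vanishes, i.e.\ $\xi_T=(\bar{L}_T(V_T),0)\in -\bar{K}_T$, from which you still need the one-line observation that this forces $\bar{L}_T(V_T)\leq 0$ and hence $\bar{L}_T(V_T)=0$ for the original portfolio. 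Finally, the exploratory claim in your middle paragraph that $-\bar{R}^0_T=\bar{R}^0_T$ is false (sums of cones are not symmetric under negation), but you correctly abandoned that maneuver, so it does not affect the argument you actually carried out.
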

\begin{proof}
Assume that $(\widebar{NA})$ is not satisfied. Then there is $\xi_i\in L^0(\bar{K}_i,{\cal F}_i)$ such that 
$\bar{L}_T(-\sum_{i=0}^T
\xi_i)\geq 0$, $\mathbb{P}$ a.s., and $\mathbb{P}\left\{\bar{L}_T(-\sum_{i=0}^T \xi_i)>0 \right\}>0$. Therefore 
$\bar{R}^0_T \ni
-\sum_{i=0}^T \xi_i \in \bar{K}_T$ and $\mathbb{P}\left\{(-\sum_{i=0}^T \xi_i)\in K_T\setminus \left\{0\right\} 
\right\}>0$. This
contradicts \eqref{tA.1}.
\end{proof}
Furthermore by similar considerations as above we have
\begin{Lemma} \label{blemp}
The following conditions are equivalent
\begin{enumerate}
\item[(a)] \eqref{tbA.1}, i.e. $\tilde{R}^0_T \cap L^0({K}_{t},{\cal F}_{t})=\left\{0\right\}$  for every  $t=0,1,\ldots,T$,
\item[(b)] when $\sum_{i=0}^T \xi_i=0$ with $\xi_i \in L^0({K}_{i},{\cal F}_{i})$ then all $\xi_i=0$,
\item[(c)] for each $t\leq T$ we have $\tilde{R}^0_t \cap L^0({K}_{t},{\cal F}_{t})=\left\{0\right\}$.
\end{enumerate}
\end{Lemma}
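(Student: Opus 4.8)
The plan is to reproduce the structure of the proof of Lemma~\ref{lemp}, replacing throughout the closed cone $\bar K_t$ by $K_t$ and the summed set $\bar R^0_\cdot$ by $\tilde R^0_\cdot$. Before transcribing, I would isolate the two algebraic properties of $K_t$ on which that argument silently rests: first, that $K_t$ is a convex cone, so that $K_t+K_t\subseteq K_t$ and $\lambda K_t\subseteq K_t$ for $\lambda>0$; and second, that $K_t$ is salient, i.e. $K_t\cap(-K_t)=\{0\}$. The cone property follows from results already proved: since $0\in G_t$ (indeed $L_t(0,0)=(b_0)^+=0$), convexity of the hull gives $\lambda G_t\subseteq\operatorname{conv}G_t$ for $\lambda\in[0,1]$, while Corollary~\ref{cor1.8} gives $\lambda G_t\subseteq G_t$ for $\lambda\ge 1$; passing to convex hulls yields $\lambda K_t\subseteq K_t$ for every $\lambda>0$, and then $K_t+K_t\subseteq K_t$ from convexity. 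Salience follows from $K_t\subseteq\bar K_t$ and the fact that $\bar K_t$ is a pointed cone whose two boundary rays have the distinct slopes $-1/\underline S_t(\infty)$ and $-1/\overline S_t(\infty)$, distinctness being exactly assumption (a5); hence $K_t\cap(-K_t)\subseteq\bar K_t\cap(-\bar K_t)=\{0\}$.

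With these two facts in hand the three implications are routine. For (a)$\Rightarrow$(b): if $\sum_{i=0}^T\xi_i=0$ with $\xi_i\in L^0(K_i,\mathcal F_i)$ and some $\xi_t\neq 0$, then $\xi_t=-\sum_{i\neq t}\xi_i\in\tilde R^0_T$ (pad slot $t$ with $0\in -K_t$) while $\xi_t\in L^0(K_t,\mathcal F_t)$, so \eqref{tbA.1} forces $\xi_t=0$, a contradiction. For (b)$\Rightarrow$(a): given $\eta_t\in\tilde R^0_T\cap L^0(K_t,\mathcal F_t)$, write $-\eta_t=\sum_i\xi_i$ with $\xi_i\in L^0(K_i,\mathcal F_i)$, so that $\sum_{i\neq t}\xi_i+(\xi_t+\eta_t)=0$ is again a representation of $0$ with summands in the respective $K_i$ (here $\xi_t+\eta_t\in K_t$ uses $K_t+K_t\subseteq K_t$); by (b) every summand vanishes, in particular $\xi_t+\eta_t=0$, whence $\eta_t=-\xi_t\in K_t\cap(-K_t)=\{0\}$. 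The implication (a)$\Rightarrow$(c) is immediate from $\tilde R^0_t\subseteq\tilde R^0_T$, which holds since $0\in -K_i$.

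Finally, for (c)$\Rightarrow$(b) I would set $\bar s:=\max\{i:\xi_i\neq 0\}$ (with $\bar s=\infty$ if all $\xi_i$ vanish) and treat the case $\bar s<\infty$: then $\xi_{\bar s}=-\sum_{i<\bar s}\xi_i\in\tilde R^0_{\bar s}\cap L^0(K_{\bar s},\mathcal F_{\bar s})$, so (c) at $t=\bar s$ gives $\xi_{\bar s}=0$, contradicting the definition of $\bar s$; hence all $\xi_i=0$. The only point that needs genuine checking beyond mechanical transcription from Lemma~\ref{lemp} is the cone-and-salience claim for $K_t$, because, unlike $\bar K_t$, the set $K_t=\operatorname{conv}G_t$ was not previously asserted to be a cone; this is the step I expect to be the main (if modest) obstacle, and it is exactly what the verification in the first paragraph settles.
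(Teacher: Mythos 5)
Your proof is correct and takes essentially the same route as the paper, which establishes Lemma \ref{blemp} merely by noting it follows ``by similar considerations'' from the proof of Lemma \ref{lemp}, i.e.\ by the very transcription you carry out. Your explicit verification that $K_t=\conv G_t$ is a salient convex cone (via $0\in G_t$, Corollary \ref{cor1.8}, and pointedness of $\bar{K}_t$ under assumption (a5)) simply makes precise what the paper leaves implicit, and is exactly the right point to check.
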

Moreover
\begin{Lemma}\label{btimpl}
We have that \eqref{tbA.1} implies  $(\widetilde{NA})$.
\end{Lemma}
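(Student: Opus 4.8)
The plan is to reduce the statement to the equivalence already established in Lemma~\ref{equival}(c), so that essentially no new argument is required. Condition \eqref{tbA.1} asserts $\tilde R^0_T\cap L^0(K_t,{\cal F}_t)=\{0\}$ for \emph{every} $t=0,1,\dots,T$; specialising to $t=T$ gives $\tilde R^0_T\cap L^0(K_T,{\cal F}_T)=\{0\}$, which is \emph{verbatim} the condition that Lemma~\ref{equival}(c) declares equivalent to $(\widetilde{NA})$. Hence \eqref{tbA.1} implies $(\widetilde{NA})$ in one line. It is worth noting that the convex-hull side is cleaner here than the cone side: in Lemma~\ref{timpl} a separate direct argument is used because the $t=T$ slice of \eqref{tA.1} is phrased with the closed cone $\bar K_T$ whereas Lemma~\ref{equival}(b) is phrased with $K_T$, while on the present side both \eqref{tbA.1} and Lemma~\ref{equival}(c) are phrased with the same set $K_T$, so they match exactly.

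For completeness I would also indicate the self-contained route mirroring the proof of Lemma~\ref{timpl}, as this is where the only real content sits. Suppose $(\widetilde{NA})$ fails; then there are $\eta_t\in L^0(-K_t,{\cal F}_t)$ with $V_T:=\sum_{t=0}^T\eta_t$ satisfying $\bar L_T(V_T)\ge 0$ a.s.\ and $\mathbb{P}\{\bar L_T(V_T)>0\}>0$. From $\bar L_T(x,y)=\sup\{\alpha:(x-\alpha,y)\in\bar K_T\}$ and $\mathbb{R}_+^2\subset\bar K_T$ one reads off that $\bar L_T(z)\ge 0\iff z\in\bar K_T$ and that $\bar L_T(z)>0$ forces $z$ into the interior of $K_T$, hence into $K_T$. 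Thus on the positive-probability event $\{\bar L_T(V_T)>0\}$ the position $V_T$ already lies in $K_T\setminus\{0\}$. To manufacture an element of $\tilde R^0_T\cap L^0(K_T,{\cal F}_T)$ valued in $K_T$ on all of $\Omega$, I would, exactly as in the proof of Lemma~\ref{equival}, leave $\eta_0,\dots,\eta_{T-1}$ untouched and redefine only the ${\cal F}_T$-measurable last increment, setting $\bar\eta_T:=\eta_T\mathbbm{1}_{D}+\bigl(-\sum_{t<T}\eta_t\bigr)\mathbbm{1}_{D^c}$ with $D:=\{\bar L_T(V_T)>0\}$, so that the new terminal position $\tilde V_T=V_T\mathbbm{1}_{D}$ is nonzero with positive probability and equals $0\in K_T$ off $D$.

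The delicate step, and the one I expect to be the genuine obstacle, is checking that the modified increment remains admissible, i.e.\ that $-\bar\eta_T=\sum_{t<T}\eta_t=V_T-\eta_T\in K_T$ on $D^c$. There $V_T$ sits on $\partial\bar K_T$ (where $\bar L_T=0$, by the Remark following Lemma~\ref{equival}), so $V_T-\eta_T$ is a sum of a boundary point of $\bar K_T$ and a point of $K_T$; using $K_T+K_T\subset K_T$ — which follows from $G_T+G_T\subset G_T$ in \eqref{gcont} together with $\operatorname{conv}(G_T)+\operatorname{conv}(G_T)=\operatorname{conv}(G_T+G_T)$ — and the superadditivity of the concave, positively homogeneous map $\bar L_T$ (concave precisely because $\underline S_T(\infty)<\overline S_T(\infty)$, assumption (a5)), one pins the sum inside $\bar K_T$, and the residual boundary sub-case is controlled by the fact that the nonzero boundary rays of the cone meet $K_T$ only at the origin (a consequence of the scaling structure in Corollary~\ref{cor1.8}). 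This is exactly the $K_T$-versus-$\bar K_T$ bookkeeping that is absent in Lemma~\ref{timpl}, whose target set is already closed, and which has already been dealt with inside Lemma~\ref{equival}(c); accordingly, in the final write-up I would simply invoke the $t=T$ instance of \eqref{tbA.1} together with Lemma~\ref{equival}(c) and not repeat the modification argument.
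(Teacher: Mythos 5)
Your headline argument is correct and is the cleanest derivation available inside the paper: the $t=T$ instance of \eqref{tbA.1} is verbatim the condition $\tilde{R}^0_T \cap L^0(K_T,{\cal F}_T)=\{0\}$, which Lemma~\ref{equival}(c) asserts is equivalent to $(\widetilde{NA})$, and the implication follows in one line. The paper itself gives no written proof of this lemma; the surrounding text (``by similar considerations as above'') gestures at repeating the direct contradiction argument of Lemma~\ref{timpl}. It is worth noting that the direct analogue is actually the more delicate route here: a failure of $(\widetilde{NA})$ produces $\zeta\in\tilde{R}^0_T$ with $\bar{L}_T(\zeta)\geq 0$ a.e., which only places $\zeta$ in $\bar{K}_T$, whereas \eqref{tbA.1} speaks of $K_T$-valued random variables; handling the event $\{\bar{L}_T(\zeta)=0\}$ requires precisely the last-increment modification performed in the proof of Lemma~\ref{equival}. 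Your reduction quarantines that boundary bookkeeping inside Lemma~\ref{equival}(c), where the paper has already taken responsibility for it, so your route is legitimate and arguably preferable to the one the paper hints at.

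Two corrections to your commentary. First, your explanation of why Lemma~\ref{timpl} needed a separate argument is off: since $L^0(K_T,{\cal F}_T)\subset L^0(\bar{K}_T,{\cal F}_T)$, the $t=T$ slice of \eqref{tA.1} is \emph{stronger} than the condition of Lemma~\ref{equival}(b), so the same one-line reduction would have worked there as well; the direct proof in Lemma~\ref{timpl} is a stylistic choice, not forced by a $K_T$-versus-$\bar{K}_T$ mismatch. Second, the patch you sketch for the ``delicate step'' of your self-contained route does not close. On the event where $V_T\in\partial\bar{K}_T\setminus K_T$ and $\eta_T=0$, the modified increment is $-V_T$ and one would need $V_T\in K_T$, which may fail; superadditivity of $\bar{L}_T$ only pins the sum in $\bar{K}_T$, and the fact that $K_T$ meets the boundary rays of $\bar{K}_T$ only at the origin works \emph{against} you in this corner rather than for you. (When $-\eta_T\neq 0$ the step can be saved in the strictly concave case, since then $K_T\setminus\{0\}\subset \mathrm{int}\,\bar{K}_T=\mathrm{int}\,K_T$ and $\mathrm{int}\,K_T+\bar{K}_T\subset \mathrm{int}\,K_T$; it is exactly the vanishing-increment case that requires the more careful treatment belonging to Lemma~\ref{equival}.) Since you explicitly discard that route and rest the final write-up on the $t=T$ slice plus Lemma~\ref{equival}(c), your submitted proof stands.
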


Consider now the following version of \eqref{tA.1} for the sets $G_t$
\begin{equation}\label{A.1}\tag{$nA.1$}
{R}^0_T \cap L^0({G}_{t},{\cal F}_{t})=\left\{0\right\} \ for \  every  \ t=0,1,\ldots,T.
\end{equation}
We have
\begin{Corollary} \label{glemp}
The following conditions are equivalent
\begin{enumerate}
\item[(a)] \eqref{A.1}, i.e. ${R}^0_T \cap L^0({G}_{t},{\cal F}_{t})=\left\{0\right\}$ for every $t=0,1,\ldots,T$,
\item[(b)] when $\sum_{i=0}^T \xi_i=0$ with $\xi_i \in L^0({G}_{i},{\cal F}_{i})$ then all $\xi_i=0$,
\item[(c)] for each $t\leq T$ we have ${R}^0_t \cap L^0({G}_{t},{\cal F}_{t})=\left\{0\right\}$.
\end{enumerate}
\end{Corollary}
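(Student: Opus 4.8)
The plan is to reproduce the proof of Lemma \ref{lemp} with $\bar{K}_t$ replaced by $G_t$ throughout, paying attention to the fact that $G_t$ is neither a cone nor convex, so that only two steps of that argument require a substitute for the cone structure of $\bar{K}_t$. I would establish the cycle (a)$\Rightarrow$(b)$\Rightarrow$(a) together with (a)$\Rightarrow$(c) and (c)$\Rightarrow$(b), exactly as there.

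For (a)$\Rightarrow$(b), suppose $\sum_{i=0}^T \xi_i = 0$ with $\xi_i \in L^0(G_i,{\cal F}_i)$ and some $\xi_t \neq 0$. Since $0 \in G_t$ I may write $\xi_t = \sum_{i\neq t}(-\xi_i)$ as an element of $R^0_T$ (take the $t$-th summand to be $0\in -G_t$), so $\xi_t \in R^0_T \cap L^0(G_t,{\cal F}_t)$; by (a) this forces $\xi_t=0$, a contradiction, giving (b). The converse (b)$\Rightarrow$(a) is the step that genuinely uses the geometry of $G_t$. If $\eta_t \in R^0_T \cap L^0(G_t,{\cal F}_t)$, write $\eta_t = -\sum_{i=0}^T \xi_i$ with $\xi_i \in G_i$. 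Grouping the index-$t$ terms and invoking $G_t+G_t\subset G_t$ from \eqref{gcont}, I set $\zeta_t := \xi_t+\eta_t \in G_t$ and $\zeta_i := \xi_i$ for $i\neq t$, so $\sum_i \zeta_i = 0$; by (b) all $\zeta_i$ vanish, whence $\xi_t+\eta_t=0$ and $\eta_t=-\xi_t$ with both $\eta_t$ and $-\eta_t$ in $G_t$. At this point the cone argument of Lemma \ref{lemp} no longer applies directly, and I instead use $G_t\subset K_t\subset \bar{K}_t$ to conclude $\eta_t\in G_t\cap(-G_t)\subset \bar{K}_t\cap(-\bar{K}_t)$; since assumption (a5), $\underline{S}_t(\infty)<\overline{S}_t(\infty)$, makes $\bar{K}_t$ a pointed cone, we have $\bar{K}_t\cap(-\bar{K}_t)=\{0\}$ and hence $\eta_t=0$, which is (a).

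Finally, (a)$\Rightarrow$(c) is immediate from $R^0_t\subset R^0_T$, and (c)$\Rightarrow$(b) follows by the descent of Lemma \ref{lemp}: from $\sum_{i=0}^T\xi_i=0$ one gets $\xi_T=\sum_{i<T}(-\xi_i)\in R^0_T\cap L^0(G_T,{\cal F}_T)=\{0\}$ by (c) at $t=T$, and then inductively $\xi_{T-1}=\dots=\xi_0=0$ (this is precisely the maximal-index argument of Lemma \ref{lemp}, which avoids any measurability issue once phrased as this backward induction on the top index). I expect the only real obstacle to be the single non-cone step inside (b)$\Rightarrow$(a): checking $G_t\cap(-G_t)=\{0\}$ for the non-convex set $G_t$, which I resolve by embedding $G_t$ into the pointed solvency cone $\bar{K}_t$ supplied by (a5), while \eqref{gcont} keeps the grouped position inside $G_t$.
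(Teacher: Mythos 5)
Your proposal is correct and takes essentially the same route as the paper, whose proof of this corollary consists precisely of the instruction to repeat the argument of Lemma \ref{lemp} using \eqref{gcont}, together with the assertion that $\xi_t+\eta_t=0$ with $\xi_t,\eta_t\in L^0(G_t,{\cal F}_t)$ forces $\xi_t=\eta_t=0$. Your only addition is to justify that last assertion explicitly, via $G_t\subset \bar{K}_t$ and the pointedness of $\bar{K}_t$ under (a5), which the paper leaves implicit.
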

\begin{proof} Notice that we have \eqref{gcont} so that we can repeat most of the arguments in the proof of 
Lemma \ref{lemp}. When
$\xi_t+\eta_t=0$, with $\xi_t,\eta_t\in L^0(G_t,{\cal F}_t)$ we have $\xi_t=\eta_t=0$.
\end{proof}
We also have
\begin{Corollary}
\eqref{A.1} implies (NA).
\end{Corollary}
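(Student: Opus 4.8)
The plan is to prove the contrapositive: assuming (NA) fails I would exhibit a nonzero element of $R^0_T\cap L^0(G_T,{\cal F}_T)$, which is precisely the failure of \eqref{A.1} read at the single date $t=T$. A convenient feature of this route is that it invokes \eqref{A.1} only for $t=T$, where the intersection is taken with the full solvency set $G_T$ rather than with $G_T^0\cup\{0\}$. Consequently no assumption on $a_0$ is needed and none of the boundary-modification trick used in the proof of Lemma \ref{equival} has to be reproduced.

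Concretely, suppose (NA) does not hold. By the characterization of arbitrage recalled before the statement, there is then $\zeta_T\in R^0_T$ with $\zeta_T\in G_T$, $\mathbb{P}$ a.e., and $\mathbb{P}\{\zeta_T\in G_T^0\}>0$. The first step I would carry out is to record that the origin is not an interior point of $G_T$: since $b_0\leq 0$ we have $L_T(0,0)=(b_0)^+=0$, so $(0,0)\in\partial G_T$ (by Corollary \ref{zeronpart}) and hence $0\notin G_T^0$.

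The second step is to deduce that $\zeta_T$ is a genuinely nonzero random variable. Indeed, were $\zeta_T=0$ $\mathbb{P}$ a.e., the event $\{\zeta_T\in G_T^0\}$ would coincide up to a null set with $\{0\in G_T^0\}=\emptyset$, contradicting $\mathbb{P}\{\zeta_T\in G_T^0\}>0$. Since moreover $\zeta_T$ takes values in $G_T$ $\mathbb{P}$ a.e., it belongs to $L^0(G_T,{\cal F}_T)$, and by construction $\zeta_T\in R^0_T$. Thus $\zeta_T$ is a nonzero element of $R^0_T\cap L^0(G_T,{\cal F}_T)$, which contradicts \eqref{A.1} taken at $t=T$. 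Therefore (NA) must hold.

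The argument is short, and the only place that demands attention is the identification $0\notin G_T^0$ in the first step: this is exactly what prevents a degenerate solvent position supported on $\partial G_T$ from collapsing to the zero random variable and thereby escaping the contradiction. Everything else is routine bookkeeping. I would also emphasize that only the $t=T$ instance of \eqref{A.1} is used, so the implication is genuinely one-way; the converse can fail when $a_0\neq 0$, since a nonzero terminal position lying on $\partial G_T$ (where $L_T=0$) is compatible with (NA) but violates \eqref{A.1}.
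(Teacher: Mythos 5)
Your proof is correct and takes essentially the same route as the paper: the paper derives this corollary from the proof of Lemma \ref{timpl}, which is exactly your contrapositive — an arbitrage $\zeta_T\in R^0_T$ with $\zeta_T\in G_T$ $\mathbb{P}$ a.e. and $\mathbb{P}\left\{\zeta_T\in G_T^0\right\}>0$ is a nonzero element of $R^0_T\cap L^0(G_T,{\cal F}_T)$, contradicting \eqref{A.1} at $t=T$. Your explicit verification that $L_T(0,0)=(b_0)^+=0$, hence $0\notin G_T^0$, just makes precise the nonvanishing step the paper leaves implicit.
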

\begin{proof}
The fact that \eqref{A.1} implies (NA) follows from the proof of Lemma \ref{timpl}. When there are no fixed costs then
$\mathbb{R}_{++}\subset G_t^0$ for $t\leq T$ and we can repeat the arguments of the proof of Lemma 3.4 of 
\cite{LT2}.
\end{proof}
\begin{Remark} The inverse inequality that is that (NA) implies \eqref{A.1} does not hold in the case of fixed 
plus concave
transaction costs as is shown in the Remark 3.5 of \cite{LT2}.
\end{Remark}

Comparing parts (b) of Lemmas \ref{lemp} and \ref{blemp} with Corollary \ref{glemp} we immediately obtain
\begin{Corollary}
If there are no fixed costs, then \eqref{tA.1} or \eqref{tbA.1} imply \eqref{A.1}.
\end{Corollary}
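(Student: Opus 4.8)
The plan is to prove this without returning to the definitions \eqref{r01}--\eqref{A.1}, and instead to work entirely through the purely algebraic reformulations supplied by part (b) of Lemma \ref{lemp}, part (b) of Lemma \ref{blemp}, and part (b) of Corollary \ref{glemp}. Each of these recasts the corresponding no-arbitrage condition as the statement that whenever a finite sum $\sum_{i=0}^T \xi_i = 0$ has its increments drawn from the appropriate solvency family --- the $\bar K_i$ for \eqref{tA.1}, the $K_i$ for \eqref{tbA.1}, and the $G_i$ for \eqref{A.1} --- every increment $\xi_i$ must vanish. Since these three conditions have exactly the same logical shape and differ only in which nested family the increments range over, the entire argument collapses to a single set containment.

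First I would record the elementary chain $G_t \subset K_t \subset \bar K_t$, valid for each $t$: the first inclusion holds because $K_t = \operatorname{conv} G_t$ by \eqref{K set}, and the second because $\bar K_t$ is by definition the closure of $K_t$. Passing to measurable selectors this yields $L^0(G_t,\mathcal{F}_t) \subset L^0(K_t,\mathcal{F}_t) \subset L^0(\bar K_t,\mathcal{F}_t)$, so that any representation $\sum_{i=0}^T \xi_i = 0$ with $\xi_i \in L^0(G_i,\mathcal{F}_i)$ is simultaneously a representation with increments in $L^0(K_i,\mathcal{F}_i)$ and in $L^0(\bar K_i,\mathcal{F}_i)$.

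With this in hand the implications run immediately. Assume \eqref{tA.1} and take any $\sum_{i=0}^T \xi_i = 0$ with $\xi_i \in L^0(G_i,\mathcal{F}_i)$; by the inclusion each $\xi_i$ lies in $L^0(\bar K_i,\mathcal{F}_i)$, so Lemma \ref{lemp}(b) forces all $\xi_i = 0$, and by Corollary \ref{glemp}(b) this is precisely \eqref{A.1}. The case of \eqref{tbA.1} is verbatim the same, using $G_i \subset K_i$ together with Lemma \ref{blemp}(b) in place of $G_i \subset \bar K_i$ and Lemma \ref{lemp}(b). This is the sense in which the conclusion is ``immediate'' once the three part-(b) equivalences are lined up against the nesting of the solvency sets.

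The only place where anything beyond a trivial containment is used --- and where the hypothesis $a_0=b_0=0$ enters --- is in invoking Corollary \ref{glemp} to identify \eqref{A.1} with its additive form: that equivalence rests on \eqref{gcont} and on the pointedness $\xi_t+\eta_t=0 \Rightarrow \xi_t=\eta_t=0$ for $G_t$-valued increments. I would therefore make explicit that under the no-fixed-cost assumption $G_t$ is genuinely pointed at the origin, so that Corollary \ref{glemp}(b) is available and the inclusion argument can be applied to the $G_t$-family. This verification, rather than the formal chain of implications, is the substantive content, and I expect it to be the only step requiring attention.
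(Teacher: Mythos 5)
Your proof is correct and is essentially identical to the paper's own argument: the paper obtains this corollary exactly by comparing the part (b) characterizations in Lemmas \ref{lemp} and \ref{blemp} with Corollary \ref{glemp}, using the inclusions $G_t\subset K_t\subset \bar{K}_t$ so that any zero-sum with $G_i$-valued increments is one with $K_i$- or $\bar{K}_i$-valued increments. Your one caveat --- that the no-fixed-cost hypothesis is needed to secure pointedness of $G_t$ for Corollary \ref{glemp}(b) --- is harmless but not actually where that hypothesis lives, since the paper states and proves Corollary \ref{glemp} without it (pointedness of $G_t$ holds in general, because $\xi_t+\eta_t=0$ with both in $G_t$ is ruled out by $\underline{S}_t(\cdot)<\overline{S}_t(\cdot)$ together with $b_0\leq 0\leq a_0$).
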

We have the following important result (see Lemma 3.2.8 of \cite{KS})
\begin{Proposition}\label{improp}
 Under \eqref{tA.1} the set ${\bar{R}}^0_T$ is closed in $L^0({\cal F}_T)$.
\end{Proposition}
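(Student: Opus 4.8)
The plan is to follow the Kabanov--Stricker closedness argument underlying Lemma 3.2.8 of \cite{KS}, exploiting \eqref{tA.1} through the equivalent algebraic form supplied by Lemma \ref{lemp}: any representation $\sum_{t=0}^T \eta_t=0$ with $\eta_t\in L^0(\bar{K}_t,{\cal F}_t)$ forces every $\eta_t=0$ (part (b)), and by part (c) the same triviality holds over each truncated horizon $t_0,\ldots,T$. Since the $\bar{K}_t$ are cones, this applies equally to selections of $-\bar{K}_t$. Because closedness in $L^0({\cal F}_T)$ means closedness for convergence in probability, and every sequence converging in probability has an a.s.\ convergent subsequence, it suffices to take $\xi^n=\sum_{t=0}^T\xi_t^n$ with $\xi_t^n\in L^0(-\bar{K}_t,{\cal F}_t)$ and $\xi^n\to\xi$ $\mathbb{P}$-a.e., and to produce adapted $\xi_t\in L^0(-\bar{K}_t,{\cal F}_t)$ with $\xi=\sum_{t=0}^T\xi_t$.

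I would argue by induction on the number of summands, peeling the earliest time. The base case, closedness of $L^0(-\bar{K}_T,{\cal F}_T)$, is immediate: an a.s.\ limit of ${\cal F}_T$-measurable selections of the closed random cone $-\bar{K}_T$ is again such a selection. For the inductive step I first extract from the ${\cal F}_{t_0}$-measurable component $\xi_{t_0}^n$. The decisive device is that the Kabanov--Stricker randomized-subsequence lemma, applied to the ${\cal F}_{t_0}$-measurable sequence $\xi_{t_0}^n$ on the set where $\liminf_n|\xi_{t_0}^n|<\infty$, yields ${\cal F}_{t_0}$-measurable random indices $\sigma_k$ with $\xi_{t_0}^{\sigma_k}\to\xi_{t_0}\in L^0(-\bar{K}_{t_0},{\cal F}_{t_0})$; since the $\sigma_k$ are ${\cal F}_{t_0}\subset{\cal F}_t$-measurable, each $\xi_t^{\sigma_k}$ with $t>t_0$ stays ${\cal F}_t$-measurable. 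Along $\sigma_k$ the tail sum $\sum_{t>t_0}\xi_t^{\sigma_k}=\xi^{\sigma_k}-\xi_{t_0}^{\sigma_k}$ converges, and the inductive hypothesis (valid because the truncated no-arbitrage condition holds by Lemma \ref{lemp}(c)) furnishes an adapted representation of the limit, which I combine with $\xi_{t_0}$ to represent $\xi$.

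The heart of the matter is to justify that the relevant $\liminf$ is finite a.s., i.e.\ that the component norms cannot blow up while the sums stay bounded. Writing $A_n=\max_{t_0\le t\le T}|\xi_t^n|$ and supposing $\Gamma=\{\liminf_nA_n=\infty\}$ had positive probability, I would normalize by setting $\tilde\xi_t^n=\xi_t^n/A_n$, so that $\max_t|\tilde\xi_t^n|=1$ and, along the subsequence realizing the $\liminf$ on $\Gamma$, $\sum_t\tilde\xi_t^n=\xi^n/A_n\to0$, because $\xi^n$ stays bounded while $A_n\to\infty$. Extracting adapted limits $\tilde\xi_t\in L^0(-\bar{K}_t,{\cal F}_t)$ with $\sum_t\tilde\xi_t=0$ and $\max_t|\tilde\xi_t|=1$ contradicts Lemma \ref{lemp}(b); hence $\mathbb{P}(\Gamma)=0$, and in particular $\liminf_n|\xi_{t_0}^n|\le\liminf_nA_n<\infty$ a.s., which is exactly what the peeling step requires.

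The main obstacle is precisely the coordination of measurable selection with normalization: the randomized subsequences delivered by the Kabanov--Stricker lemma are in general only ${\cal F}_T$-measurable, so a careless simultaneous extraction would destroy the adaptedness of the individual $\xi_t$, and in the normalization step the preservation of non-triviality ($\max_t|\tilde\xi_t|=1$) through the inductive reduction is delicate. Both difficulties are resolved by keeping the induction organized so that at each stage only the current earliest, genuinely ${\cal F}_{t_0}$-measurable component is extracted, and by invoking the truncated-horizon triviality of Lemma \ref{lemp}(c) at every reduction, exactly along the lines of section 3.2.2 of \cite{KS}.
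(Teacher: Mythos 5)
Your overall route is the right one: the paper gives no proof of Proposition \ref{improp} beyond the citation of Lemma 3.2.8 of \cite{KS}, and your scheme --- induction with peeling of the earliest date, the Kabanov--Stricker randomized-subsequence lemma with ${\cal F}_{t_0}$-measurable indices, and the triviality of null representations from Lemma \ref{lemp}(b),(c) --- is exactly the machinery of section 3.2.2 of \cite{KS}. However, the step you yourself call ``the heart of the matter'' fails as written, on measurability. You normalize by $A_n=\max_{t_0\le t\le T}|\xi_t^n|$, which is only ${\cal F}_T$-measurable. Consequently $\tilde\xi_{t_0}^n=\xi_{t_0}^n/A_n$ is no longer ${\cal F}_{t_0}$-measurable, the blow-up set $\Gamma=\{\liminf_n A_n=\infty\}$ lies only in ${\cal F}_T$, and no reorganization of the order of extraction (your proposed fix in the last paragraph concerns only the subsequence extraction, not the denominator) can make the limits $\tilde\xi_t$ land in $L^0(-\bar{K}_t,{\cal F}_t)$. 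But Lemma \ref{lemp}(b) applies only to null representations $\sum_t\tilde\xi_t=0$ with \emph{adapted} summands; with merely ${\cal F}_T$-measurable $\tilde\xi_t$, condition \eqref{tA.1} gives you nothing, the contradiction does not materialize, and the a.s.\ finiteness of $\liminf_n|\xi_{t_0}^n|$ --- the very input your peeling step needs --- is not established.

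The repair, which is how the argument behind Lemma 3.2.8 of \cite{KS} actually runs, is to renormalize by the current component only: on the ${\cal F}_{t_0}$-measurable set $\Gamma_{t_0}=\{\liminf_n|\xi_{t_0}^n|=\infty\}$ divide the whole sequence by the ${\cal F}_{t_0}$-measurable factor $1+|\xi_{t_0}^n|$; this preserves adaptedness of every term, since the $\bar{K}_t$ are cones and ${\cal F}_{t_0}\subset{\cal F}_t$ for $t\ge t_0$. Extract, via the randomized-subsequence lemma, an ${\cal F}_{t_0}$-measurable limit $\tilde\xi_{t_0}\in L^0(-\bar{K}_{t_0},{\cal F}_{t_0})$ with $|\tilde\xi_{t_0}|=1$ on $\Gamma_{t_0}$; the normalized tail sums then converge to $-\tilde\xi_{t_0}$, and the \emph{inductive hypothesis} --- closedness of $\sum_{t>t_0}L^0(-\bar{K}_t,{\cal F}_t)$, available through Lemma \ref{lemp}(c) --- represents this limit as $\sum_{t>t_0}\tilde\xi_t$ with adapted $\tilde\xi_t\in L^0(-\bar{K}_t,{\cal F}_t)$. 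This yields an adapted null representation that is nonzero on $\Gamma_{t_0}$, so Lemma \ref{lemp}(b) forces $\mathbb{P}(\Gamma_{t_0})=0$, and the peeling proceeds. Note that you never need the later components to converge individually, only their sum; that is precisely why the single-component, ${\cal F}_{t_0}$-measurable normalization suffices and the global maximum $A_n$ should never be introduced.
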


For $(x,y)\in \mathbb{R}^2$ and $t\leq T$ define $Q^\epsilon_t(x,y)=(x,y\vee {-x \over
\underline{S}_t(\infty)-{\underline{S}_t(\infty)\over 2}\wedge \epsilon})$ for $x\leq 0$ and
$Q^\epsilon_t(x,y)=(x,y\vee {-x \over \underline{S}_t(\infty)+ \epsilon})$ for $x\geq 0$. Clearly 
$Q^\epsilon_t$ transforms $(x,y)\in
\mathbb{R}^2$ into $K_t$. For $(x,y)\in \bar{K}_t$ we have $\lim_{\epsilon \to 0}Q^\epsilon_t(x,y)=(x,y)$, 
$\mathbb{P}$ a.e..

\begin{Lemma}\label{closures}
Closure of ${\tilde{R}}^0_T$ in $L^0({\cal F}_T)$ coincides with the closure of ${\bar{R}}^0_T$, which under 
\eqref{tA.1} is equal to
${\bar{R}}^0_T$.
\end{Lemma}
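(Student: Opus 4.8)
The plan is to establish two separate assertions: first, that the closures of $\tilde{R}^0_T$ and $\bar{R}^0_T$ in $L^0({\cal F}_T)$ coincide, and second, that under \eqref{tA.1} this common closure equals $\bar{R}^0_T$ itself. The second assertion is immediate once the first is known: Proposition \ref{improp} states that under \eqref{tA.1} the set $\bar{R}^0_T$ is closed in $L^0({\cal F}_T)$, so it equals its own closure $\overline{\bar{R}^0_T}$, and hence the common closure obtained from the first assertion is $\bar{R}^0_T$. Thus essentially all of the work lies in the first assertion.

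Since $K_t\subset \bar{K}_t$ we have $\tilde{R}^0_T\subset \bar{R}^0_T$, whence $\overline{\tilde{R}^0_T}\subset \overline{\bar{R}^0_T}$ trivially. The substantive step is the reverse inclusion, for which it suffices to show $\bar{R}^0_T\subset \overline{\tilde{R}^0_T}$. I would prove this by an $\epsilon$-truncation built from the maps $Q^\epsilon_t$ introduced just before the statement. Take an arbitrary $V_T=\sum_{t=0}^T \xi_t\in \bar{R}^0_T$ with $\xi_t\in L^0(-\bar{K}_t,{\cal F}_t)$, so that $-\xi_t\in \bar{K}_t$ a.e., and set $\xi_t^\epsilon:=-Q^\epsilon_t(-\xi_t)$. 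Since $Q^\epsilon_t$ sends all of $\mathbb{R}^{2}$ into $K_t$, we get $Q^\epsilon_t(-\xi_t)\in K_t$, hence $\xi_t^\epsilon\in -K_t$; and because $Q^\epsilon_t$ is a Borel function of $(x,y)$ together with the ${\cal F}_t$-measurable quantity $\underline{S}_t(\infty)$, the random vector $\xi_t^\epsilon$ is ${\cal F}_t$-measurable. Consequently $\xi_t^\epsilon\in L^0(-K_t,{\cal F}_t)$ and $V_T^\epsilon:=\sum_{t=0}^T \xi_t^\epsilon\in \tilde{R}^0_T$.

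It then remains to check that $V_T^\epsilon\to V_T$ in $L^0({\cal F}_T)$ as $\epsilon\to 0$. Here I would invoke the stated property that $\lim_{\epsilon\to 0}Q^\epsilon_t(x,y)=(x,y)$, $\mathbb{P}$ a.e., for $(x,y)\in \bar{K}_t$. Applied with $(x,y)=-\xi_t\in \bar{K}_t$ this gives $\xi_t^\epsilon\to \xi_t$ a.e. Choosing a sequence $\epsilon_n\downarrow 0$ and summing over the finitely many dates $t=0,\ldots,T$ yields $V_T^{\epsilon_n}=\sum_{t=0}^T \xi_t^{\epsilon_n}\to V_T$, $\mathbb{P}$ a.e., hence in probability, i.e.\ in $L^0({\cal F}_T)$. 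Thus $V_T\in \overline{\tilde{R}^0_T}$, which proves $\bar{R}^0_T\subset \overline{\tilde{R}^0_T}$ and therefore $\overline{\bar{R}^0_T}\subset \overline{\tilde{R}^0_T}$. Combined with the trivial inclusion this gives $\overline{\tilde{R}^0_T}=\overline{\bar{R}^0_T}$, and together with Proposition \ref{improp} the full statement follows.

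I expect the only delicate points to be bookkeeping rather than conceptual: verifying that the truncation $\xi_t^\epsilon$ genuinely lands in $-K_t$, which rests entirely on the fact, asserted in the excerpt, that $Q^\epsilon_t$ maps into the convex hull $K_t$ rather than merely into its closure $\bar{K}_t$, and checking that ${\cal F}_t$-measurability survives the composition with $Q^\epsilon_t$. The mode of convergence also deserves a word of care, since $L^0$ carries the topology of convergence in probability; but because almost everywhere convergence of a finite sum along $\epsilon_n\downarrow 0$ implies convergence in probability, this presents no real obstacle.
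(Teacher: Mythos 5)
Your proposal is correct and follows essentially the same route as the paper: the paper's proof also approximates an arbitrary $z=-\sum_{i=0}^T \xi_i\in\bar{R}^0_T$ by $z_\epsilon=-\sum_{i=0}^T Q^\epsilon_i(\xi_i)\in\tilde{R}^0_T$ and uses the almost everywhere convergence $Q^\epsilon_i(\xi_i)\to\xi_i$ on $\bar{K}_i$, with Proposition \ref{improp} supplying closedness of $\bar{R}^0_T$ under \eqref{tA.1}. Your write-up merely makes explicit the points the paper leaves implicit (the trivial inclusion $\tilde{R}^0_T\subset\bar{R}^0_T$, ${\cal F}_t$-measurability after composing with $Q^\epsilon_t$, and passing from a.e.\ convergence along $\epsilon_n\downarrow 0$ to convergence in probability).
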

\begin{proof} It suffices to show that for any element $z:=-\sum_{i=0}^T \xi_i\in {\bar{R}}^0_T$ such that 
$\xi_i\in
L^0({\bar{K}}_{i},{\cal F}_{i})$ there is a sequence $z_\epsilon\in {\tilde{R}}^0_T$ converging in probability 
to $z$. Let
$z_\epsilon=-\sum_{i=0}^T Q^\epsilon_i(\xi_i)$. Clearly $z_\epsilon\in {\tilde{R}}^0_T$ and $z_\epsilon \to z$, 
$\mathbb{P}$ a.e.,
which completes the proof.
\end{proof}
We shall need the following 
\begin{Lemma}
Concave hull of $L_t(x,y)$ defined as $\sup\left\{\sum \lambda_i L_t(x_i,y_i): \lambda_i\geq 0, \sum 
\lambda_i(x_i,y_i)=(x,y)\right\}$
coincides with $\bar{L}_t(x,y)$. Furthermore
\begin{equation}\label{impineqp}
\bar{L}_t(x,y)= \lim_{\lambda \to \infty} {L_t(\lambda (x,y))\over \lambda}
\end{equation}
and for
\begin{equation}\label{impineq}
\delta_t((x,y)):=\bar{L}_t(x,y)-L_t(x,y)=\bar{L}_t(0,y)-L_t((0,y))
\end{equation}
we have $0\leq \delta_t(x,y)$, and ${\delta_t(\lambda(x,y))\over \lambda}$  converges decreasingly to $0$ 
uniformly in $x$ and
uniformly in $y$ from compact subsets as $\lambda\to \infty$.
\end{Lemma}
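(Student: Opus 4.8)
The plan is to exploit the decomposition $L_t(x,y)=x+L_t(0,y)$ and $\bar L_t(x,y)=x+\bar L_t(0,y)$, together with the fact that $\bar L_t$ is concave and positively homogeneous of degree one in $(x,y)$ (and hence superadditive). Writing $z=(x,y)$, I would first establish \eqref{impineqp}. By Corollary \ref{cor1.8} we have $L_t(s\,w)\geq s\,L_t(w)$ for every $s\geq 1$ and $w\in\mathbb{R}^2$; taking $w=\lambda z$ and $s=\mu/\lambda$ for $0<\lambda\leq\mu$ shows that $\lambda\mapsto L_t(\lambda z)/\lambda$ is nondecreasing on $(0,\infty)$, so the limit in \eqref{impineqp} is in fact a supremum. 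To identify it, note $L_t(\lambda z)/\lambda=x+L_t(0,\lambda y)/\lambda$, and for $y>0$ one has $L_t(0,\lambda y)/\lambda=\big(b_0+\lambda y\,\underline{S}_t(\lambda y)\big)^+/\lambda\to y\,\underline{S}_t(\infty)$, while for $y<0$, $L_t(0,\lambda y)/\lambda=-a_0/\lambda+y\,\overline{S}_t(-\lambda y)\to y\,\overline{S}_t(\infty)$, using continuity of the bid/ask curves and their limits at infinity. This is precisely $\bar L_t(0,y)$, which gives \eqref{impineqp}.

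Next I would treat the concave hull. For the upper bound, for any admissible family with $\lambda_i\geq 0$ and $\sum_i\lambda_i z_i=z$ I use $L_t\leq\bar L_t$, positive homogeneity and superadditivity of $\bar L_t$ to get $\sum_i\lambda_i L_t(z_i)\leq\sum_i\bar L_t(\lambda_i z_i)\leq\bar L_t\big(\sum_i\lambda_i z_i\big)=\bar L_t(z)$, so the concave hull is $\leq\bar L_t$. For the lower bound I use a single-term combination $\lambda_1=1/\lambda$, $z_1=\lambda z$ (which is admissible since $\lambda_1 z_1=z$); its value $L_t(\lambda z)/\lambda$ converges to $\bar L_t(z)$ by \eqref{impineqp}, so the concave hull is $\geq\bar L_t(z)$, and equality follows.

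Then \eqref{impineq} and the inequality $\delta_t\geq 0$ are immediate: since $L_t(x,y)=x+L_t(0,y)$ and $\bar L_t(x,y)=x+\bar L_t(0,y)$, the $x$-terms cancel in $\delta_t$, so $\delta_t(x,y)=\bar L_t(0,y)-L_t(0,y)=\delta_t(0,y)$, and nonnegativity is the $\lambda=1$ case of the monotone supremum. For the monotone convergence of $\delta_t(\lambda z)/\lambda$, I rewrite, using homogeneity of $\bar L_t$, $\delta_t(\lambda z)/\lambda=\bar L_t(0,y)-L_t(0,\lambda y)/\lambda$; since $\lambda\mapsto L_t(0,\lambda y)/\lambda$ is nondecreasing with limit $\bar L_t(0,y)$, this expression is nonincreasing in $\lambda$ and tends to $0$. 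Independence of $x$ makes the convergence uniform in $x$ trivially.

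The main work is the uniformity in $y$ on compact sets, and this is the step I expect to be the real obstacle, because $\delta_t(0,\cdot)$ may jump at $y=0$ when $a_0>0$. The key reduction is that, after the substitution $Y=\lambda y$, uniform convergence of $\delta_t(0,\lambda y)/\lambda$ over $|y|\leq R$ is equivalent to $\sup_{|Y|\leq\lambda R}\delta_t(0,Y)=o(\lambda)$, so I would prove the stronger statement $\sup_{|Y|\leq M}\delta_t(0,Y)=o(M)$ as $M\to\infty$. From the explicit formulas, $\delta_t(0,Y)=Y\big(\underline{S}_t(\infty)-\underline{S}_t(Y)\big)-b_0$ for large $Y>0$ and $\delta_t(0,Y)=|Y|\big(\overline{S}_t(|Y|)-\overline{S}_t(\infty)\big)+a_0$ for $Y<0$, whence $\delta_t(0,Y)/|Y|\to 0$ as $|Y|\to\infty$, while $\delta_t(0,\cdot)$ stays bounded on bounded $Y$-sets. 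A standard $\varepsilon$-argument then yields the sublinear bound: fixing $\varepsilon>0$, choose $Y_\varepsilon$ with $\delta_t(0,Y)\leq\varepsilon|Y|$ for $|Y|\geq Y_\varepsilon$; on $|Y|\leq Y_\varepsilon$ the supremum is a finite constant $C_\varepsilon$, so for $M\geq C_\varepsilon/\varepsilon$ one gets $\sup_{|Y|\leq M}\delta_t(0,Y)\leq\varepsilon M$. Dividing by $\lambda$ with $M=\lambda R$ gives the uniform convergence on compacts, completing the proof.
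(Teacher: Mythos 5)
Your proposal is correct, and it is genuinely more self-contained than the paper's proof, though the two share their core mechanism. The paper disposes of \eqref{impineqp} (and, implicitly, the concave-hull identification) by citing Proposition 2.6 (vi) of \cite{LT2}, whereas you prove both from scratch: the limit via the explicit formulas $L_t(0,\lambda y)/\lambda=(b_0+\lambda y\,\underline{S}_t(\lambda y))^+/\lambda\to y\underline{S}_t(\infty)$ for $y>0$ and the analogous computation for $y<0$, and the hull via superadditivity plus positive homogeneity of $\bar L_t$ (a minimum of two linear functions) for the upper bound, combined with the one-term representation $\lambda_1=1/\lambda$, $z_1=\lambda z$ for the lower bound --- a neat exploitation of the fact that the paper's definition of the hull does not impose $\sum\lambda_i=1$. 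The monotonicity of $\lambda\mapsto L_t(\lambda z)/\lambda$ via Corollary \ref{cor1.8} is identical in both proofs. Where you genuinely diverge is the uniformity on compacts: the paper works directly from the explicit formula \eqref{fff}, splitting into the cases $0\leq y\leq M$ and $-M\leq y\leq 0$ and estimating $y(\underline{S}_t(\infty)-\underline{S}_t(\lambda y))$ and $-y(\overline{S}_t(-\lambda y)-\overline{S}_t(\infty))$ term by term for large $\lambda$ (implicitly using that these expressions are controlled uniformly over $[0,M]$, e.g.\ by their monotonicity in $y$); your rescaling identity $\sup_{|y|\leq R}\delta_t(0,\lambda y)/\lambda=\lambda^{-1}\sup_{|Y|\leq\lambda R}\delta_t(0,Y)$ reduces everything to the sublinear growth $\sup_{|Y|\leq M}\delta_t(0,Y)=o(M)$, which your $\varepsilon$-split (using the exact expressions of Remark \ref{imfff}) establishes cleanly, and which gracefully absorbs the discontinuity of $\delta_t(0,\cdot)$ at $y=0$ when $a_0>0$ --- a point the paper's terse estimate glosses over. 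One caveat worth making explicit in your write-up: the thresholds $Y_\varepsilon$, $C_\varepsilon$ and the resulting large-$\lambda$ cutoffs depend on $\omega$ through the random curves, so what you obtain (like the paper) is $\mathbb{P}$-a.e.\ pointwise-in-$\omega$ convergence, uniform in $(x,y)$; the $L^\infty$-uniform statements are deliberately deferred in the paper to the separate conditions $(gL_0)$ and $(L_0)$.
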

\begin{proof}
The convergence \eqref{impineqp} follows from Proposition 2.6 (vi) of \cite{LT2}.
Notice furthermore that for $\lambda_1>\lambda_2$ by Corollary \ref{cor1.8} we have
$L_t(\lambda_1(x,y))\geq {\lambda_1 \over \lambda_2} L_t(\lambda_2(x,y))$ so that 
${\delta_t(\lambda_1(x,y))\over \lambda_1}\leq
{\delta_t(\lambda_2(x,y))\over \lambda_2}$. We have to show that the convergence of ${\delta_t(\lambda(x,y))\over 
\lambda}$ to $0$ is
uniform for bounded $y$.
Assume first that $M\geq y\geq 0$. For a given $\ve>0$ we want to show that  ${\delta_t(\lambda(x,y))\over 
\lambda}\leq \ve$ for a
sufficiently large $\lambda$. We have
\begin{equation}\label{fff}
\delta_t((x,y))=\mathbbm{1}_{y\geq 0} \left(y \underline{S}_t(\infty)-y(\underline{S}_t(y) + b_0)^+ \right) + 
\mathbbm{1}_{y<0}
\left(y(\overline{S}_t(\infty)-\overline{S}_t(-y)) + a_0 \right).
\end{equation}
Therefore
\begin{equation}
{\delta_t(\lambda(x,y)) \over \lambda}\leq y(\underline{S}_t(\infty)-(\underline{S}_t(\lambda y)) + {b_0\over 
\lambda}
\end{equation}
and  $ {b_0\over \lambda}\leq {\ve \over 2}$ and also $y(\underline{S}_t(\infty)-(\underline{S}_t(\lambda 
y))\leq {\ve \over 2}$,
since $y\leq M$, for a sufficiently large $\lambda$.
Consider now the case $0\geq y \geq -M$. Then
\begin{equation}
{\delta_t(\lambda(x,y)) \over \lambda}\leq y(\overline{S}_t(\infty)-\overline{S}_t(-\lambda y)) - {a_0\over 
\lambda} \leq -y (\overline{S}_t(-\lambda y)-\overline{S}_t(\infty))+ {-a_0\over \lambda}
\end{equation}
and similarly as above $-y (\overline{S}_t(-\lambda y)-\overline{S}_t(\infty))\leq {\ve \over 2}$ and
${-a_0\over \lambda}\leq {\ve \over 2}$ for a sufficiently large $\lambda$.
\end{proof}
\begin{Remark}
We can not expect that in general that
$\sup_y {\delta_t(\lambda(x,y))\over \lambda} $ converges to $0$ as $\lambda \to \infty$. In the case of our 
Example \ref{ex1} we have
$y(\underline{S}_t(\infty)-\underline{S}_t(\lambda y))={(\underline{S}_t(\infty)-\underline{S}_t(0))y \over 
(\lambda y + 1)^\alpha}\to
\infty$ when $y\to \infty$ and
$y(\overline{S}_t(\infty)-\overline{S}_t(-\lambda y))={-y(\overline{S}_t(0)-\overline{S}_t(\infty))\over 
(\lambda y + 1)^{\alpha}}\to
\infty$ when $y\to -\infty$.
\end{Remark}
\begin{Remark}\label{imfff}
 We can write explicitly the form of \eqref{fff}. Namely
for $y<0$ we have $\delta_t(x,y)=a_0 - y(\overline{S}_t(-y)-\overline{S}_t(\infty))$,
for $0\leq y\leq b_t^{-1}(0)$ we have $\delta_t(x,y)=y \underline{S}_t(\infty)$ and finally for $y\geq 
b_t^{-1}(0)$ we have
 $\delta_t(x,y)=-b_0+ y(\underline{S}_t(\infty)-\underline{S}_t(y))$.
\end{Remark}
\begin{Proposition}
Closures of $\widetilde{LV}^0_T$ and $\widebar{LV}_T^0$ in $L^0({\cal{F}}_T)$ coincide and are contained in the 
closure of $A_T^0$.
\end{Proposition}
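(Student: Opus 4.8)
The plan is to prove the two assertions separately: for the coincidence of closures I would use continuity of $\bar L_T$ together with the approximation in Lemma \ref{closures}, and for the containment I would use the scaling representation \eqref{impineqp} of $\bar L_T$ as a limit of rescaled concave liquidation values, repaired by a truncation and a boundary perturbation.

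First I would settle that the two closures coincide. Since $\widetilde{LV}^0_T\subset\widebar{LV}^0_T$ we have $\overline{\widetilde{LV}^0_T}\subset\overline{\widebar{LV}^0_T}$ at once. For the reverse inclusion take $\bar L_T(V_T)\in\widebar{LV}^0_T$ with $V_T=-\sum_{t=0}^T\xi_t$, $\xi_t\in L^0(\bar K_t,\mathcal F_t)$, and use the maps $Q^\epsilon_t$ preceding Lemma \ref{closures}: the elements $V_T^\epsilon=-\sum_t Q^\epsilon_t(\xi_t)\in\tilde R^0_T$ satisfy $V_T^\epsilon\to V_T$, $\mathbb P$ a.e.. Because $\bar L_T(x,y)=x+\mathbbm{1}_{y\geq 0}\,y\underline S_T(\infty)+\mathbbm{1}_{y<0}\,y\overline S_T(\infty)$ is continuous (indeed Lipschitz), $\bar L_T(V_T^\epsilon)\to\bar L_T(V_T)$ a.e., hence in probability, so $\bar L_T(V_T)\in\overline{\widetilde{LV}^0_T}$. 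Thus $\widebar{LV}^0_T\subset\overline{\widetilde{LV}^0_T}$ and the two closures agree; consequently it suffices to prove $\widebar{LV}^0_T\subset\overline{A^0_T}$.

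Fix $V_T=-\sum_{t=0}^T\bar\xi_t$ with $\bar\xi_t\in L^0(\bar K_t,\mathcal F_t)$. The main obstacle is that rescaling a proportional–cost position by $\lambda$ keeps it in $\bar K_t$ but not in $G_t$, so the tempting candidate $\tfrac1n L_T(nV_T)$ need not lie in $A^0_T$. To repair this I would first move $\bar\xi_t$ strictly inside $\bar K_t$: set $\bar\xi^\rho_t=\bar\xi_t+\rho(1,0)$, so that $\bar L_t(\bar\xi^\rho_t)=\bar L_t(\bar\xi_t)+\rho\geq\rho>0$, i.e. $\bar\xi^\rho_t$ lies in the interior $\{\bar L_t>0\}$ of $\bar K_t$, and $V^\rho_T:=-\sum_t\bar\xi^\rho_t$ satisfies $\bar L_T(V^\rho_T)=\bar L_T(V_T)-\rho(T+1)\to\bar L_T(V_T)$ as $\rho\to0$. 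For fixed $\rho$ I would exploit \eqref{impineq}: since $L_t(n\bar\xi^\rho_t)=n\bar L_t(\bar\xi^\rho_t)-\delta_t(n\bar\xi^\rho_t)$ with $\delta_t(n\bar\xi^\rho_t)/n\to0$ a.e. and $\bar L_t(\bar\xi^\rho_t)\geq\rho>0$, the sets $A^t_n:=\{n\bar\xi^\rho_t\in G_t\}=\{L_t(n\bar\xi^\rho_t)\geq 0\}\in\mathcal F_t$ satisfy $\mathbb P(A^t_n)\to1$.

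With this in hand I would define the admissible increments $\hat\xi^n_t:=-n\bar\xi^\rho_t\,\mathbbm{1}_{A^t_n}\in L^0(-G_t,\mathcal F_t)$ (on $A^t_n$ one has $n\bar\xi^\rho_t\in G_t$, off $A^t_n$ the value is $0\in G_t$), so that $\hat W_n:=\sum_t\hat\xi^n_t\in R^0_T$ and, for $n\geq 1$, $\tfrac1n L_T(\hat W_n)\in A^0_T$. On $A_n:=\bigcap_t A^t_n$, with $\mathbb P(A_n)\to1$, we have $\hat W_n=nV^\rho_T$, hence $\tfrac1n L_T(\hat W_n)=\tfrac1n L_T(nV^\rho_T)=\bar L_T(V^\rho_T)-\tfrac1n\delta_T(nV^\rho_T)$, which by \eqref{impineqp} tends to $\bar L_T(V^\rho_T)$ a.e. on $A_n$; since $\mathbb P(A_n)\to1$ this gives $\tfrac1n L_T(\hat W_n)\to\bar L_T(V^\rho_T)$ in probability, so $\bar L_T(V^\rho_T)\in\overline{A^0_T}$. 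Letting $\rho\to0$ and using continuity of $\bar L_T$ yields $\bar L_T(V_T)\in\overline{A^0_T}$, i.e. $\widebar{LV}^0_T\subset\overline{A^0_T}$, and therefore both closures are contained in $\overline{A^0_T}$. The delicate points, which the $\mathcal F_t$–adapted truncation by $A^t_n$ and the perturbation by $\rho$ are precisely designed to handle, are the measurability of the stopped increments and the fact that positions on $\partial\bar K_t$ may never rescale into $G_t$.
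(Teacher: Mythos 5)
Your proof is correct, and its two halves track the paper's proof at the level of the key tools: the coincidence of the closures is obtained exactly as in the paper, via the maps $Q^\epsilon_t$ of Lemma \ref{closures} together with continuity of $\bar{L}_T$, and the containment in the closure of $A^0_T$ rests on the same scaling identity $L_T(\lambda V)/\lambda=\bar{L}_T(V)-\delta_T(\lambda V)/\lambda$ coming from \eqref{impineq} and \eqref{impineqp}. Where you genuinely diverge is in how you secure membership in $A^0_T$ before passing to the limit. The paper starts from $\widetilde{LV}^0_T$ (increments valued in $-K_t$) and treats $L_T(\lambda\sum_i\xi_i)/\lambda$ as an element of $A^0_T$ outright; that step tacitly needs $\lambda\sum_i\xi_i\in R^0_T$, i.e. $\lambda\xi_i\in -G_i$, which for a fixed deterministic $\lambda$ holds only outside an exceptional set: a point of $K_t=\conv G_t$ enters $G_t$ under scaling only beyond an $\omega$-dependent threshold (via \eqref{gcont} and Corollary \ref{cor1.8}), and a point of $\partial\bar{K}_t$ outside $K_t$ may never enter, as you correctly observe. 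Your two devices --- the cash shift $\bar\xi_t+\rho(1,0)$, which forces $\bar{L}_t(\bar\xi^\rho_t)\geq\rho>0$ and hence $\mathbb{P}(A^t_n)\to 1$, and the adapted truncation $\hat\xi^n_t=-n\bar\xi^\rho_t\mathbbm{1}_{A^t_n}$, which keeps the portfolio honestly in $R^0_T$ while the discrepancy event has vanishing probability --- supply precisely the justification the paper leaves implicit, and as a by-product let you handle $\widebar{LV}^0_T$ (increments in $-\bar{K}_t$) directly rather than reducing to $\widetilde{LV}^0_T$ first; since the limit along $n$ is only in probability and closures in $L^0({\cal F}_T)$ are taken in that metric, the final $\rho\to 0$ step closes the argument cleanly. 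One stylistic caution: the phrase ``tends to $\bar{L}_T(V^\rho_T)$ a.e.\ on $A_n$'' is loose because $A_n$ varies with $n$; the correct reading, which you in fact use, is the bound $\mathbb{P}\{|\frac1n L_T(\hat W_n)-\bar{L}_T(V^\rho_T)|>\epsilon\}\leq\mathbb{P}(A_n^c)+\mathbb{P}\{\delta_T(nV^\rho_T)/n>\epsilon\}\to 0$. In short: same mechanism as the paper, but your version repairs (at the cost of a $\rho$--$n$ double limit) an admissibility step that the paper's one-line argument does not actually verify.
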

\begin{proof}
 An element of $\widetilde{LV}^0_T$ is of the form $\bar{L}_T(\sum_{i=0}^T \xi_i)$ with $\xi_i\in 
 L^0(K_i,{\cal{F}}_i)$. Then by
 \eqref{impineq} we have
\begin{equation}
0\leq \bar{L}_T\left(\sum_{i=0}^T \xi_i\right)-{L_T(\lambda \sum_{i=0}^T \xi_i)\over \lambda}={\delta_T(\lambda 
\sum_{i=0}^T \xi_i)\over
\lambda}\to 0
\end{equation}
as $\lambda\to \infty$ in $L^0({\cal{F}}_T)$.  Therefore ${L_T(\lambda \sum_{i=0}^T \xi_i)\over \lambda}$ converges as $\lambda \to \infty$ 
to
$\bar{L}_T(\sum_{i=0}^T \xi_i)$ and  $\bar{L}_T(\sum_{i=0}^T \xi_i)$ is in the closure of $A_T^0$  in 
$L^0({\cal{F}}_T)$.
By Lemma \ref{closures} and continuity of $\bar{L}$ we obtain that the sets $\widetilde{LV}^0_T$ and 
$\widebar{LV}_T^0$ have the same
closure in $L^0({\cal{F}}_T)$, which completes the proof.
\end{proof}

\section{Strong arbitrage}

We say that we have {\it strong arbitrage} $(SA)$ on the market with solvent sets $(G_i)$ if there is 
$t\in\{0,1,2,...,T\}$ and set
$B_t \in \mathcal{F}_t$ such that $\mathbb{P}(B_t)>0$ and for portfolio process $V_n=\sum_{i=0}^n \xi_i$, with 
$\xi_i \in
L^0(-G_i,\mathcal{F}_i)$ we have $\xi_i=0$ for $i\leq t-1$, $\xi_i(\omega)=0$ for $i\geq t$ and $\omega \in 
B_t^c$ and $L_T(V_T)\geq
m_t>0$ on $B_t$, where $m_t \in L^0([0,\infty),\mathcal{F}_t)$.\newline
In the case when $G_i$ is replaced by $K_i$ or $\bar{K}_i$ we shall have {\it strong arbitrage} 
$(\widetilde{SA})$ or
$(\widebar{SA})$.\newline
We have {\it bounded arbitrage} $(BA)$ on the market with solvent sets $(G_i)$ when for portfolio process 
$V_n=\sum_{i=0}^n \xi_i$,
with $\xi_i \in L^0(-G_i,\mathcal{F}_i)$ there is $M>0$ such that for $\xi_i:=(\xi_i^1,\xi_i^2)$ we have
$|\xi_i|=|\xi_i^1|+|\xi_i^2|\leq M$ and $L_T(V_T)\geq 0$ with $\mathbb{P}\left\{L_T(V_T) > 0\right\}>0$. 
\newline
We say that we have {\it random bounded strong arbitrage} $(rbSA)$ on the market with solvent sets ($G_i$) if 
there is
$t\in\{0,1,2,...,T\}$ and set $B_t \in \mathcal{F}_t$ such that
$\mathbb{P}(B_t)>0$ and for portfolio process $V_n=\sum_{i=0}^n \xi_i$, with $\xi_i \in 
L^0(-G_i,\mathcal{F}_i)$ we have $\xi_i=0$ for
$i\leq t-1$, $\xi_i(\omega)=0$ for $i\geq t$ and $\omega \in B_t^c$ and $ |\xi_u(\omega) 
|=|\xi_u^1(\omega)|+|\xi_u^2(\omega)| \leq
\alpha_t(\omega)$ for $u\geq t$, $\omega \in B_t$ and $L_T(V_T)\geq m_t>0$ on $B_t$, where $m_t, \alpha_t \in
L^0((0,\infty),\mathcal{F}_t)$.\newline
In the case when there is $M>0$ such that $\alpha_t(\omega)\leq M$ we have {\it bounded strong arbitrage} 
$(BSA)$.
Similarly as above, when $G_i$ are replaced by $K_i$, $\bar{K}_i$ we shall have ($\widetilde{rbSA}$) and 
($\widebar{rbSA}$) or
($\widetilde{BSA}$) and ($\widebar{BSA}$) respectively.\newline
{\it Absence of arbitrage} will be denoted by $(NSA)$, $(\widetilde{NSA})$, $(\widebar{NSA})$ or $(NrbSA)$, 
$(\widetilde{NrbSA})$,
$(\widebar{NrbSA})$. It is rather clear that
\begin{eqnarray*}
&&(SA)\implies(A), \quad (\widetilde{SA})\implies (\widetilde{A}) \quad\text{and}\quad (\widebar{SA})\implies 
(\widebar{A}) \\
&&(rbSA)\implies(A), \quad (\widetilde{rbSA})\implies (\widetilde{A}) \quad \text{and} \quad 
(\widebar{rbSA})\implies (\widebar{A})
\end{eqnarray*}
and conversely
\begin{eqnarray*}
&&(NA)\implies(NSA), \quad (\widetilde{NA})\implies (\widetilde{NSA}), \quad (\widebar{NA})\implies 
(\widebar{NSA}) \\
&&(NA)\implies(NrbSA), \quad (\widetilde{NA})\implies (\widetilde{NrbSA}) \text{  and  } 
(\widebar{NA})\implies (\widebar{NrbSA})
\end{eqnarray*}
and similar implications hold in the case of bounded portfolio strategies.
Consider now an example being a continuation of Example \ref{ex1}
\begin{Example}
Assume we have the following bid and ask curves as in \eqref{exbidp} and \eqref{exaskp} 
\begin{equation}
\underline{S}_t(m)=\underline{S}_t(0) \underline{d}_t(m),
\end{equation} 
where $\underline{d}_t(m)={(1+p_t)(m+1)^\alpha - p_t \over (M+1)^\alpha}$ with deterministic $p_t\geq 0$,
\begin{equation}
\overline{S}_t(l)=\overline{S}_t(0)\overline{d}_t(l),
\end{equation}
where $\overline{d}_t(l)={(1-q_t)(l+1)^\alpha + q_t \over (l+1)^\alpha}$ with deterministic $q_t\in [0,1)$, assuming additionally that $(1-q_t)\overline{S}_t(0)\geq (1+p_t) \underline{S}_t(0)$. Let now 
$\underline{S}_t(0)=(1+e_t)\underline{S}_{t-1}(0)$, $\overline{S}_t(0)=(1+f_t)\overline{S}_{t-1}(0)$ for $t=1,2,\ldots$, with ${\cal F}_t$ measurable random variable $e_t,f_t$ taking values in $(-1,\infty)$.  
Assume furthermore that there is a set $B_1\in {\cal F}_1$, $\mathbb{P}(B_1)>0$ and an ${\cal F}_1$ measurable random variable $\bar{e}_1$ such that $e_2\geq \bar{e}_1$ on $B_1$ and for some   ${\cal F}_1$ nonnegative random variable  $l$
\begin{equation}
(1+\bar{e}_1)\underline{S}_1(0)\underline{d}_2(l)-\overline{S}_1(0)\overline{d}_1(l):=m_1>0 
\end{equation}
on the set $B_1$. 
The strategy to buy $l$ assets at time $1$ on the event $B_1$ and sell them at time $2$ gives a strong arbitrage. 
If $l\overline{S}_1(l)$, $l\underline{S}_2(l)$ and $l$ are bounded we have bounded arbitrage, and when they are bounded by an ${\cal F}_1$ measurable random variable we have random bounded strong arbitrage.
\end{Example}  
Consider the following two conditions
\begin{enumerate}
\item[$(gL_0)$]  $\lim_{\lambda \to \infty} 
    \sup_{y}\left\|\frac{\delta_t(\lambda(0,y))}{\lambda}\right\|_{\infty}=0$, \label{gL}
\item[$(L_0)$] $\lim_{\lambda \to \infty} \sup_{|y|\leq 
    M}\left\|\frac{\delta_t(\lambda(0,y))}{\lambda}\right\|_{\infty}=0$ for any
    $M \in (0,\infty)$, \label{L}
\end{enumerate}
where $\|\cdot \|_{\infty}$ stands for $L^{\infty}$ norm.

\begin{Remark} Condition $(gL_0)$ is satisfied when $(G_i)$ correspond to fixed plus proportional transaction 
costs i.e.
$a_t(l)=a_0+l\overline{S}_t$, $b_t(m)=b_0+m\underline{S}_t$. Then clearly $\delta_t(0,y)\leq 
\max\left\{a_0,-b_0 \right\}$.
\end{Remark}

\begin{Lemma}\label{lem15}
When $\|\overline{S}_t(0)\|_{\infty}<\infty$,  $\|\underline{S}_t(\infty)-\underline{S}_t(\lambda 
y)\|_{\infty}\to 0$ for $y > 0$ and
$\|\overline{S}_t(\infty)-\overline{S}_t(-\lambda y)\|_{\infty}\to 0$ for $y < 0$ as $\lambda \to \infty$ then 
condition $(L_0)$ is
satisfied.
\end{Lemma}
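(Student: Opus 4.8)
The plan is to estimate $\delta_t(\lambda(0,y))/\lambda=\delta_t(0,\lambda y)/\lambda$ directly from the explicit form \eqref{fff} and Remark \ref{imfff}, and to exploit the product structure $|y|\cdot(\text{price increment})$ by splitting the range $|y|\le M$ into a neighbourhood of the origin and its complement. First I would record two elementary bounds. For $y\ge 0$, since $\delta_t(0,\lambda y)=\lambda y\,\underline S_t(\infty)-(b_0+\lambda y\,\underline S_t(\lambda y))^+$ and the positive part is nonnegative and dominates its argument,
\begin{equation*}
0\le \frac{\delta_t(0,\lambda y)}{\lambda}\le \min\Bigl\{\, y\,\underline S_t(\infty),\ \frac{-b_0}{\lambda}+y\bigl(\underline S_t(\infty)-\underline S_t(\lambda y)\bigr)\,\Bigr\},
\end{equation*}
while for $y<0$, writing $|y|=-y$,
\begin{equation*}
0\le \frac{\delta_t(0,\lambda y)}{\lambda}= \frac{a_0}{\lambda}+|y|\bigl(\overline S_t(-\lambda y)-\overline S_t(\infty)\bigr).
\end{equation*}
Nonnegativity of $\delta_t$ was already established above.

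Next I would use $\|\overline S_t(0)\|_\infty<\infty$ to obtain a single $\omega$-uniform envelope near the origin. From (a5) and the monotonicity of $\overline S_t$ in (a3) we have $\underline S_t(\infty)\le\overline S_t(\infty)\le\overline S_t(0)$, so, setting $C:=\|\overline S_t(0)\|_\infty$, both $\underline S_t(\infty)\le C$ and $\overline S_t(-\lambda y)-\overline S_t(\infty)\le\overline S_t(0)\le C$ hold $\mathbb{P}$ a.e. Fix $\varepsilon>0$ and pick $\eta>0$ with $\eta C<\varepsilon/2$. For $|y|\le\eta$ the first bound gives $\delta_t(0,\lambda y)/\lambda\le \eta C<\varepsilon/2$ when $y\ge 0$, and the second gives $\delta_t(0,\lambda y)/\lambda\le a_0/\lambda+\eta C$ when $y<0$; in both cases the $y$-dependent part is already below $\varepsilon/2$ uniformly in $\omega$, and the deterministic terms $a_0/\lambda,\,-b_0/\lambda\to 0$.

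Then I would treat $\eta\le|y|\le M$ by forcing the rescaled argument to infinity. Here $\lambda|y|\ge\lambda\eta$, so monotonicity of $\underline S_t$ and $\overline S_t$ yields $\underline S_t(\infty)-\underline S_t(\lambda y)\le\underline S_t(\infty)-\underline S_t(\lambda\eta)$ for $y\ge\eta$ and $\overline S_t(-\lambda y)-\overline S_t(\infty)\le\overline S_t(\lambda\eta)-\overline S_t(\infty)$ for $y\le-\eta$. Multiplying by $|y|\le M$ and invoking the two convergence hypotheses at the fixed points $\lambda\eta$ (namely $\|\underline S_t(\infty)-\underline S_t(\lambda\eta)\|_\infty\to 0$ and $\|\overline S_t(\infty)-\overline S_t(\lambda\eta)\|_\infty\to 0$) together with $a_0/\lambda,\,-b_0/\lambda\to 0$, I can make this contribution smaller than $\varepsilon$ in $L^\infty$ for all large $\lambda$. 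Combining the two regimes gives $\sup_{|y|\le M}\|\delta_t(\lambda(0,y))/\lambda\|_\infty\to 0$, i.e. $(L_0)$.

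The only delicate point is the uniformity in $y$: the hypotheses are pointwise in $y$ and do not by themselves control the supremum over the whole interval. The product form $|y|\cdot(\text{increment})$ is exactly what resolves this—near the origin the increment is merely bounded (by $C$) while the prefactor $|y|$ is small, whereas away from the origin the prefactor is bounded by $M$ while the increment decays because $\lambda y\to\infty$ uniformly. I expect no further obstacle beyond organising this split and choosing $\eta$ before $\lambda$.
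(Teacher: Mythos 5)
Your proof is correct, and it rests on the same underlying dichotomy as the paper's proof --- near the origin the increment is merely bounded by $\overline{S}_t(0)$ while the prefactor $|y|$ is small, away from the origin the prefactor is at most $M$ while the increment vanishes --- but you implement it by a genuinely different route. The paper argues sequentially: it picks $\lambda_n\to\infty$ realizing the limit, chooses near-maximizers $y_n$ of the supremum (its inequality \eqref{api}), and then splits into the cases ``some subsequence $y_{n_k}\to 0$'' versus ``$|y_n|\geq c>0$ eventually'', handling the first via the envelope $\max\{\underline{S}_t(m),\overline{S}_t(l)\}\leq \overline{S}_t(0)$ and the second via the convergence hypotheses. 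You instead run a direct, quantitative two-regime split of the whole interval $|y|\leq M$ at a threshold $\eta$ with $\eta\|\overline{S}_t(0)\|_\infty<\varepsilon/2$, chosen before $\lambda$, and on $\eta\leq |y|\leq M$ you reduce the increment at $\lambda y$ to the increment at the single fixed argument $\lambda\eta$ by monotonicity of the bid and ask curves, to which the pointwise-in-$y$ hypothesis applies. This buys two things: you avoid the subsequence bookkeeping entirely, and you make explicit the monotone comparison that the paper uses only implicitly --- in its second case the claim $\|y_n(\underline{S}_t(\infty)-\underline{S}_t(\lambda_n y_n))\|_\infty\to 0$ with varying $y_n\in[c,M]$ also requires passing, via monotonicity, to the hypothesis at the fixed point $c$, a step the paper does not spell out. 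Conversely, the paper's extraction argument is shorter on the page. Your preliminary bounds on $\delta_t(\lambda(0,y))/\lambda$ (the two upper bounds for $y\geq 0$ obtained from $(\cdot)^+\geq 0$ and $(\cdot)^+\geq(\cdot)$, and the exact expression for $y<0$) agree with \eqref{fff} and Remark \ref{imfff}, so there is no gap.
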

\begin{proof}
There is $\lambda_n\to \infty$ such that
\begin{equation}
\lim_{\lambda \to \infty} \sup_{|y|\leq M}\left\|\frac{\delta_t(\lambda(0,y))}{\lambda}\right\|_{\infty}=
\lim_{n \to \infty} \sup_{|y|\leq M}\left\|\frac{\delta_t(\lambda_n(0,y))}{\lambda_n}\right\|_{\infty}.
\end{equation}
For each $\epsilon>0$ and  $\lambda_n$ there is $y_n$ such that
\begin{equation}\label{api}
\sup_{|y|\leq M}\left\|\frac{\delta_t(\lambda_n(0,y))}{\lambda_n}\right\|_{\infty}-
\left\|\frac{\delta_t(\lambda_n(0,y_n))}{\lambda_n}\right\|_{\infty}\leq \epsilon.
\end{equation}
We have two cases: either there is a subsequence $n_k$ such that $y_{n_k}\to 0$ as $k\to \infty$ or
there is $c>0$ such that $|y_n|\geq c$ for sufficiently large $n$. In the first case since 
$\max\left\{\underline{S}_t(m),\overline{S}_t(l)\right\}\leq \overline{S}_t(0)$ for $l,m\geq 0$ we have
$\|y_{n_k}(\underline{S}_t(\infty)-\underline{S}_t(\lambda_{n_k} y_{n_k}))\|_{\infty}\to 0
$
and
$\|y_{n_k}(\overline{S}_t(\infty)-\overline{S}_t(-\lambda_{n_k} y_{n_k}))\|_{\infty}\to 0
$
as $k\to \infty$, while in the second case
$\|y_{n}(\underline{S}_t(\infty)-\underline{S}_t(\lambda_{n} y_{n}))\|_{\infty}\to 0$
and
$\|y_{n}(\overline{S}_t(\infty)-\overline{S}_t(-\lambda_{n} y_{n}))\|_{\infty}\to 0$
as $n\to \infty$. Consequently using Remark \ref{imfff} and \eqref{api} we obtain
\begin{equation}
\lim_{\lambda \to \infty} \sup_{|y|\leq M}\left\|\frac{\delta_t(\lambda(0,y))}{\lambda}\right\|_{\infty}\leq 
\epsilon,
\end{equation}
which taking into account that $\epsilon$ could be chosen arbitrarily small completes the proof.
\end{proof}
\begin{Remark}
In the case of Example \ref{ex1} under $\|\overline{S}_t(0)\|_{\infty}<\infty$ we have that
$\|\underline{S}_t(\infty)-\underline{S}_t(\lambda y)\|_{\infty}\to 0$ for $y > 0$ and
$\|\overline{S}_t(\infty)-\overline{S}_t(-\lambda y)\|_{\infty}\to 0$ for $y < 0$ as $\lambda \to \infty$ and 
consequently by Lemma
\ref{lem15} condition $(L^0)$ is satisfied.
\end{Remark}

\begin{Theorem}\label{thm:3.2}
Under $(gL_0)$ condition ($NSA$) is equivalent to ($\widebar{NSA}$), while when $\|\overline{S}_T(0)\|_\infty 
<\infty$ under $(L_0)$
condition ($NrbSA$) is equivalent to ($\widebar{NrbSA}$).
\end{Theorem}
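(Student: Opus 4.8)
The plan is to prove both equivalences by splitting each into the two implications, the forward one being immediate and the reverse one requiring a scaling argument built on \eqref{impineq} and the conditions $(gL_0)$/$(L_0)$.

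First I would dispose of the easy directions. Since $G_i\subset \bar{K}_i$ and $L_T(x,y)\le \bar{L}_T(x,y)$ for every $(x,y)$, any strategy $(\xi_i)$ with $\xi_i\in L^0(-G_i,{\cal F}_i)$ is also admissible on the $(\bar{K}_t)$ market, the boundedness requirement in $(rbSA)$ is untouched, and $\bar{L}_T(V_T)\ge L_T(V_T)\ge m_t>0$ on $B_t$. Hence $(SA)$ implies $(\widebar{SA})$ and $(rbSA)$ implies $(\widebar{rbSA})$; equivalently $(\widebar{NSA})\Rightarrow(NSA)$ and $(\widebar{NrbSA})\Rightarrow(NrbSA)$, and this uses neither $(gL_0)$ nor $(L_0)$.

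The substance is the reverse implication $(\widebar{SA})\Rightarrow(SA)$. Suppose we are given a $(\bar{K}_t)$ strong arbitrage: a date $t$, a set $B_t\in{\cal F}_t$ with $\mathbb{P}(B_t)>0$, and increments $\xi_i\in L^0(-\bar{K}_i,{\cal F}_i)$ vanishing for $i\le t-1$ and off $B_t$, with $\bar{L}_T(V_T)\ge m_t>0$ on $B_t$, where $V_T=\sum_{i=t}^T\xi_i$. The idea is to scale by a large deterministic factor $\lambda$ and use that, by positive homogeneity of $\bar{L}_i$ together with \eqref{impineq}, $L_i(\lambda z)=\lambda\bar{L}_i(z)-\delta_i(\lambda z)$, while $(gL_0)$ forces $\delta_i(\lambda\,\cdot)/\lambda$ to be uniformly small. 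The difficulty is that a boundary increment ($\bar{L}_i(-\xi_i)=0$) need not land in $G_i$ after scaling, since $\delta_i\ge0$. To create room I would first push each increment strictly inside $\bar{K}_i$ by spending a little extra cash: on $B_t$ replace $\xi_i$ by $\xi_i'=\xi_i-(\rho_i,0)$, so that $\bar{L}_i(-\xi_i')=\bar{L}_i(-\xi_i)+\rho_i\ge\rho_i>0$, while the terminal liquidation drops only by $\sum_i\rho_i$, which I keep $\le \frac{1}{2}m_t$.

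Next I would convert the random thresholds into deterministic ones, which is where the ${\cal F}_t$-measurability constraints of strong arbitrage bite: the single $\lambda$ must be chosen once and the active set must remain in ${\cal F}_t$. So I shrink $B_t$ to $B_t'=B_t\cap\{m_t\ge c\}\in{\cal F}_t$ for a constant $c>0$; since $m_t>0$ a.e. on $B_t$, a suitable $c$ gives $\mathbb{P}(B_t')>0$. On $B_t'$ take $\rho_i=c/(2(T-t+1))$ and zero the strategy off $B_t'$. Under $(gL_0)$ the deterministic quantity $\gamma_i(\lambda):=\sup_y\|\delta_i(\lambda(0,y))/\lambda\|_\infty\to0$ and dominates $\delta_i(-\lambda\xi_i')/\lambda$, so for one large deterministic $\lambda$ we get, on $B_t'$, $L_i(-\lambda\xi_i')\ge\lambda(\rho_i-\gamma_i(\lambda))\ge0$, i.e. $\lambda\xi_i'\in -G_i$, and $L_T(\lambda V_T')\ge\lambda\big(m_t-\sum_i\rho_i-\gamma_T(\lambda)\big)\ge\lambda c/4>0$. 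Thus $(\eta_i):=(\lambda\xi_i')$ is a $(G_t)$ strong arbitrage with active set $B_t'$, proving $(NSA)\Rightarrow(\widebar{NSA})$.

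For the $(rbSA)$ statement the global control $(gL_0)$ is unavailable, so I would use the boundedness built into $(rbSA)$ to invoke $(L_0)$ with a fixed $M$. Here $|\xi_u|\le\alpha_t$ on $B_t$ gives $|y_T|\le(T-t+1)\alpha_t=:\tilde{\alpha}_t<\infty$ a.e., and I shrink the active set further to $B_t'=B_t\cap\{m_t\ge c\}\cap\{\tilde{\alpha}_t\le M\}$, still of positive probability for suitable $c,M$. On $B_t'$ every second coordinate appearing, both of the increments and of $V_T'$, lies in $[-M,M]$, so $(L_0)$ replaces $\gamma_i(\lambda)$ by $\sup_{|y|\le M}\|\delta_i(\lambda(0,y))/\lambda\|_\infty\to0$ and the same computation goes through; the hypothesis $\|\overline{S}_T(0)\|_\infty<\infty$ is what keeps $\bar{L}_T(V_T')$ and $\delta_T$ finite and controlled on this bounded-position set. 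Finally $|\eta_i|=\lambda|\xi_i'|\le\lambda(\alpha_t+\rho_i)$ is dominated by an ${\cal F}_t$-measurable random variable, so $(\eta_i)$ is a genuine $(G_t)$ random bounded strong arbitrage, giving $(NrbSA)\Rightarrow(\widebar{NrbSA})$. I expect the main obstacle to be precisely this bookkeeping of measurability and uniformity — arranging one deterministic $\lambda$ to work simultaneously for all finitely many dates on a single ${\cal F}_t$-set — rather than the analytic estimate, which is a direct consequence of $(gL_0)$/$(L_0)$.
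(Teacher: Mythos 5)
Your proof is correct, and it implements the key scaling step differently enough from the paper's that a comparison is worthwhile. Both arguments rest on the same analytic core, namely the identity $L_i(\lambda z)=\lambda\bar{L}_i(z)-\delta_i(\lambda z)$ coming from \eqref{impineq} together with $(gL_0)$ resp.\ $(L_0)$, but the mechanics diverge. The paper first forces the terminal position into pure cash (replacing $\tilde{V}_T$ by $\bar{L}_T(\tilde{V}_T)\cdot e_1$ after subtracting the boundary position $\zeta_T$), then scales and repairs each increment \emph{exactly}, setting $\xi_r^k=k\tilde{\xi}_r+L_r(-k\tilde{\xi}_r)e_1$ so that $L_r(-\xi_r^k)=0$; the cash compensations accumulate in $L_T(V_T^k)$ and are bounded below by $-\delta_r(-k\tilde{\xi}_r)$. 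You instead spend a fixed slice $\rho_i=c/(2(T-t+1))$ of the guaranteed profit to push each increment strictly inside $\bar{K}_i$ \emph{before} scaling, so that $\lambda\xi_i'\in -G_i$ outright once your $\gamma_i(\lambda)<\rho_i$, and you control the terminal value directly through $\delta_T(\lambda V_T')$, avoiding the pure-cash reduction altogether; since only finitely many dates occur and $(gL_0)$/$(L_0)$ are stated in $L^\infty$, one deterministic $\lambda$ serves all dates and almost all $\omega$, as you note. In the $(rbSA)$ part this buys you two things: truncating to $B_t\cap\{m_t\geq c\}\cap\{(T-t+1)\alpha_t\leq M\}$ lets you apply $(L_0)$ with a fixed $M$ and a single deterministic $\lambda$ --- whereas the paper must choose a random ${\cal F}_t$-measurable scaling $k$ beating $\sup_{|y|\leq\alpha_t}$ --- and your witness is in fact bounded by a constant, i.e.\ you actually produce a $(BSA)$, which is stronger than required (shrinking the active set is legitimate, since the definitions only demand some positive-probability event in ${\cal F}_t$). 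One small inaccuracy: you claim $\|\overline{S}_T(0)\|_\infty<\infty$ ``keeps $\bar{L}_T(V_T')$ and $\delta_T$ finite and controlled'', but in your route this hypothesis is never genuinely used; in the paper it enters only to bound the modified last increment $\tilde{\xi}_T-\zeta_T$ by $T\alpha_t(1+\|\overline{S}_T(0)\|_\infty)$ after the pure-cash reduction, a step you do not perform. What the paper's compensation trick buys in exchange is that it works on the original event $B_t$ with the original random bound $\alpha_t$, without truncating the active set.
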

\begin{proof}
Since $(\widebar{NSA})\implies (NSA)$ to have inverse implication it is sufficient to show that 
$(\widebar{SA})\implies (SA)$.
Suppose there is $t \in \{0,...,T\}$, set $B_t \in \mathcal{F}_t$, $\mathbb{P}(B_t)>0$ and portfolio 
$\tilde{V}_u=\sum_{i=0}^u
\tilde{\xi}_i$ such that $\tilde{\xi}_i=0$ on $B_t^c$ for $i=0,1,2,...,T$, while $\tilde{\xi}_i=0$ on $B_t$ for 
$i=1,2,...,t-1$ and
$\tilde{\xi}_i \in L^0(-\bar{K}_i,\mathcal{F}_i)$ for $i=t,t+1,...,T$. Moreover 
$\bar{L}_T(\tilde{V}_T)>m_t>0$ on $B_t$
with $m_t \in L^0((0,\infty),\mathcal{F}_t)$.
We may assume that $m_t(\omega)\geq \epsilon >0$ on $B_t$ (replacing if necessary $m_t$ by 
$\mathbbm{1}_{\{m_t\geq \epsilon\}}m_t$ and
$B_t$ by $B_t \cap \{m_t \geq \epsilon \}$). Another assumption is that $\tilde{V}_T$ is in the form
$\bar{L}_T(\tilde{V}_T)\cdot e_1$, where $e=(0,1)$. This requires suitable explanation:\newline
Notice first that $\bar{L}_T(\bar{L}_T(\tilde{V}_T)\cdot e_1)=\bar{L}_T(\tilde{V}_T)\geq 
0$ and therefore
$\bar{L}_T(\tilde{V}_T)\cdot e_1 \in L^0(\bar{K}_T,\mathcal{F}_T)$. Let
$\zeta_T=\tilde{V}_T-\bar{L}_T(\tilde{V}_T)\cdot e_1$. Since $\bar{L}_T(\zeta_T)=0$ we have 
that $\zeta_T \in
L^0(\bar{K}_T,\mathcal{F}_T)$. Therefore
\begin{equation}\label{eeeq}
L^0(\bar{K}_T,\mathcal{F}_T) \ni \bar{L}_T(\tilde{V}_T)\cdot
e_1=\tilde{V}_T-\zeta_T=\sum_{i=0}^{T-1}\tilde{\xi}_i+(\tilde{\xi}_T-\zeta_T)
\end{equation}
$\tilde{\xi}_i \in L^0(-\bar{K}_i, \mathcal{F}_i)$ for $i\leq T-1$, $(\tilde{\xi}_T-\zeta_T)\in 
L^0(-\bar{K}_T,
\mathcal{F}_T)$ and when we have $(\widebar{SA})$ for $\tilde{V}_T$ then also for 
$\bar{L}_T(\tilde{V}_T)\cdot
e_1$.\newline
We assume therefore that $\tilde{V}_T =\bar{L}_T(\tilde{V}_T)\cdot e_1$, where $\tilde{V}_T=\sum_{i=0}^T 
\tilde{\xi}_i$ with
$\tilde{\xi}_i\in L^0(-\bar{K}_i, \mathcal{F}_i)$     and $m_t\geq\epsilon$ on $B_t$. For $k>1$ let
\begin{equation}
\xi_r^k=k\tilde{\xi}_r+L_r(-k\tilde{\xi}_r)e_1 \quad \text{for } t\leq r\leq T
\end{equation}
and define $V_u^k=\sum_{r=1}^u\xi_r^k \mathbbm{1}_{B_t}$ for $u\geq t$.\newline
We have that
\begin{equation}
L_r(-\xi_r^k)=L_r\left(-k\tilde{\xi}_r-L_r(-k\tilde{\xi}_r)e_1\right)=L_r(-k\tilde{\xi}_r)-L_r(-k\tilde{\xi}_r)=0.
\end{equation}
So that $-\xi_r^k \in L^0(G_r,\mathcal{F}_r)$ i.e. $\xi_r^k \in L^0(-G_r,\mathcal{F}_r)$ and $V^k$ is a $G$ 
portfolio.\newline
Now
\begin{equation}\label{rp1}
L_r(-k\tilde{\xi}_r)=\bar{L}_r(-k\tilde{\xi}_r)-\delta_r(-k\tilde{\xi}_r)\geq -\delta_r(-k\tilde{\xi}_r)
\end{equation}
since $\bar{L}_r(-k\tilde{\xi}_r)\geq 0$ and we have on $B_t$
\begin{equation}
L_T(V_T^k)=L_T\left(k\cdot
\tilde{V}_T+\sum_{r=t}^TL_r(-k\tilde{\xi}_r)e_1\right)=L_T(k\tilde{V}_T)+\sum_{r=t}^TL_r(-k\tilde{\xi}_r).
\end{equation}
Since $\tilde{V}_T=\bar{L}_T(\tilde{V}_T)e_1$ by Corollary \ref{cor1.8}  and \eqref{rp1}
\begin{eqnarray}
L_T(V_T^k)&\geq& k
L_T(\tilde{V}_T)-\sum_{r=t}^T\delta_r(-k\tilde{\xi}_r)=k\left(L_T(\tilde{V}_T)-\sum_{r=t}^T\frac{\delta_r(-k\tilde{\xi}_r)}{k}
\right)\geq \nonumber \\
&&k\left(\epsilon - \sum_{r=t}^T\sup_y \frac{\delta_r(-k(0,y))}{k} \right)\geq 
k\left(\epsilon - \sum_{r=t}^T \sup_y \left\| \frac{\delta_r(-k(0,y))}{k}\right\|_{\infty} \right)>0
\end{eqnarray}
for a sufficiently large $k$, $\mathbb{P}$ a.e., so that we have $(SA)$. \newline
Assume now $L_0$ and $(\widebar{rbSA})$. Then $|\tilde{\xi}_u(\omega)|\leq \alpha_{t}$ for $T-1\geq u\geq t$ 
and by \eqref{eeeq} we
have that $\tilde{\xi}_T$ is now equal to $\tilde{\xi}_T-\zeta_T$ and
$|\tilde{\xi}_T-\zeta_T|\leq |\sum_{i=t}^{T-1}\tilde{\xi}_i|+|\bar{L}_T(\tilde{V}_T)|\leq T\alpha_t (1+
\|\overline{S}_T(0)\|_\infty)$. Therefore with an abuse of  notation we can write that there is $\mathcal{F}_t$ 
measurable $\alpha_t$
such that for all $0\leq i \leq T$ we have $|\tilde{\xi}_i|\leq \alpha_t$ and
\begin{equation}
\sum_{r=t}^T\frac{\delta_r(-k\tilde{\xi}_r)}{k}\leq \sum_{r=t}^T\sup_{|y| \leq 
\alpha_t}\frac{\delta_r(-k(0,y))}{k}\leq
\sum_{r=t}^T\sup_{|y| \leq \alpha_t}\frac{\|\delta_r(-k(0,y))\|_{\infty}}{k}.
\end{equation}
We choose random $k$ such that $\forall_{r\geq t}\ \sup_{y \leq
\alpha_t}\frac{\|\delta_r(-k(0,y))\|_{\infty}}{k}\leq\frac{\epsilon}{2(T-t+1)}$. Clearly $k$ is $\mathcal{F}_t$ 
measurable. Then for
portfolio $V_T^k$ with random $k$ we have
\begin{equation}\label{impineqq}
L_t(V_T^k)\geq k\left(\epsilon - \sum_{r=t}^T\frac{\delta_r(-k\tilde{\xi}_r)}{k}  \right)\geq k\left(\epsilon - 
\frac{\epsilon}{2}
\right)=k(\omega)\frac{\epsilon}{2}.
\end{equation}
Consequently $(\widebar{rbSA})$ implies $(rbSA)$, and since $(\widebar{NrbSA})$ implies $(NrbSA)$ we have 
that $(\widebar{NrbSA})$
is equivalent to $(NrbSA)$.

\end{proof}

\begin{Remark}
Theorem \ref{thm:3.2} corrects Theorem 6.4 of \cite{LT2}. In the case of the assumption $(L_0)$ we had to 
introduce the notion of
bounded strong arbitrage since otherwise we couldn't get \eqref{impineqq}. The same problem appears in the 
proof of Theorem 6.6 of
\cite{LT2}, which can not be adapted in the case of $(L_0)$.
\end{Remark}
An analysis of the proof of Theorem \ref{thm:3.2} shows that

\begin{Corollary}
Assuming that $\|\overline{S}_t(0)\|_\infty <\infty$ and $(L_0)$ holds we have that $(\widebar{BSA})$ is 
equivalent to  $(BSA)$.
\end{Corollary}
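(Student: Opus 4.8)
The plan is to prove the two implications separately, re-using almost entirely the machinery already set up for the $(rbSA)$ part of Theorem~\ref{thm:3.2}. The direction $(BSA)\implies(\widebar{BSA})$ requires no new idea: if $V_n=\sum_{i=0}^n\xi_i$ with $\xi_i\in L^0(-G_i,{\cal F}_i)$, $|\xi_i|\le M$, realizes a bounded strong arbitrage on the $(G_i)$ market, then each $\xi_i\in -G_i\subset -\bar K_i$, the same deterministic bound $M$ applies, and $\bar L_T(V_T)\ge L_T(V_T)\ge m_t>0$ on $B_t$ because $L_t\le\bar L_t$ everywhere. Hence the very same portfolio is a $(\widebar{BSA})$, and everything reduces to showing $(\widebar{BSA})\implies(BSA)$.

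For the converse I would follow the $(L_0)$ part of the proof of Theorem~\ref{thm:3.2} verbatim up to the construction of the approximating $(G_i)$ portfolio. Starting from a $(\widebar{BSA})$ with data $(t,B_t,\tilde V_T=\sum_{i=0}^T\tilde\xi_i)$, $|\tilde\xi_u|\le\alpha_t\le M$ and $\bar L_T(\tilde V_T)\ge\epsilon$ on $B_t$, I reduce as there to the normalized form $\tilde V_T=\bar L_T(\tilde V_T)\,e_1$; the estimate $|\tilde\xi_T-\zeta_T|\le T\alpha_t(1+\|\overline S_T(0)\|_\infty)$ from \eqref{eeeq}, together with $\alpha_t\le M$ and $\|\overline S_T(0)\|_\infty<\infty$, now yields a single \emph{deterministic} constant $M':=TM(1+\|\overline S_T(0)\|_\infty)$ with $|\tilde\xi_i|\le M'$ for all $i$. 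Setting $\xi_r^k=k\tilde\xi_r+L_r(-k\tilde\xi_r)e_1$ and $V_u^k=\sum_{r=t}^u\xi_r^k\mathbbm 1_{B_t}$ exactly as before gives $-\xi_r^k\in L^0(G_r,{\cal F}_r)$ and, on $B_t$,
\begin{equation}
L_T(V_T^k)\ge k\Bigl(\epsilon-\sum_{r=t}^T\sup_{|y|\le M'}\Bigl\|\tfrac{\delta_r(-k(0,y))}{k}\Bigr\|_\infty\Bigr).
\end{equation}

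The crucial new point — the step that genuinely separates $(BSA)$ from $(rbSA)$ — is that because $M'$ is now \emph{deterministic}, condition $(L_0)$ applied with this fixed $M'$ lets me choose a deterministic $k$, rather than the $\mathcal F_t$-measurable random $k$ used in Theorem~\ref{thm:3.2}, so large that the bracket above is at least $\epsilon/2$; consequently $L_T(V_T^k)\ge k\epsilon/2>0$ on $B_t$. It then remains to check that $V_T^k$ is bounded by a deterministic constant: writing $L_r(-k\tilde\xi_r)=k\bar L_r(-\tilde\xi_r)-\delta_r(-k\tilde\xi_r)$ and using the positive homogeneity of $\bar L_r$, the bound $|\tilde\xi_r|\le M'$, $\max\{\underline S_r(\infty),\overline S_r(\infty)\}\le\overline S_r(0)$, and the explicit form of $\delta_r$ from Remark~\ref{imfff}, one sees that $|L_r(-k\tilde\xi_r)|$ — and hence $|\xi_r^k|$ — is bounded by a deterministic constant for the fixed $k$ just selected. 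Thus $V_T^k$ is a bounded strong arbitrage on the $(G_i)$ market, i.e. $(BSA)$ holds, and the equivalence follows. The only real obstacle is bookkeeping: one must verify that the deterministic nature of $M$ propagates through the normalization step and through the bound on $\xi_r^k$, which is precisely where $\|\overline S_T(0)\|_\infty<\infty$ enters.
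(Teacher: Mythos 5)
Your proof is correct and is essentially the paper's intended argument: the paper disposes of $(BSA)\implies(\widebar{BSA})$ with the same one-line observation and then simply refers to ``the ideas of the second part of the proof of Theorem \ref{thm:3.2}'' for the converse, which is exactly the construction $\xi_r^k=k\tilde\xi_r+L_r(-k\tilde\xi_r)e_1$ you carry out. Your added bookkeeping --- that the deterministic bound $M'=TM(1+\|\overline{S}_T(0)\|_\infty)$ survives the normalization $\tilde V_T=\bar L_T(\tilde V_T)e_1$, so that $(L_0)$ yields a \emph{deterministic} $k$ and hence a deterministic bound on $|\xi_r^k|$ --- is precisely the point that distinguishes $(BSA)$ from $(rbSA)$ and is what the paper leaves implicit.
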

\begin{proof}
Notice that clearly $(BSA)$ implies $(\widebar{BSA})$ and we therefore have to show that $(\widebar{BSA})$ 
implies $(BSA)$. The
proof then follows ideas of the second part of the proof of Theorem \ref{thm:3.2}.
\end{proof}

\noindent
\textbf{Assumption $(L_1)$}: there is an ${\cal F}_t$ adapted positive random variable $c_t(y)$, $y\in 
\mathbb{R}$ such that
$L_t(0,y)\leq\bar{L}_t(0,y)-c_t(y)$ for $y\neq 0$ and $t \in \{0,1,...,T\}$.

\begin{Remark} Assumption $(L_1)$ is not restrictive. It is satisfied for fixed plus proportional transaction 
costs, fixed plus
concave transaction costs or for strictly concave transaction costs. It is not satisfied only when there is $t$ 
such that  the
boundary of $G_t$ contains either halfline $y={-x\over \overline{S}_t(\infty)}$ with $x\geq 0$ or $y={-x \over
\underline{S}_t(\infty)}$ for $x\leq 0$.
\end{Remark}

\begin{Theorem}\label{thm:3.3} Under ($gL_0$) and ($L_1$) we have that $(A) \equiv (SA)\equiv (\widebar{SA})$.
\end{Theorem}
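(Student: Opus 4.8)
The plan is to close the cycle $(SA)\Rightarrow(A)\Rightarrow(\widebar{SA})\Rightarrow(SA)$, from which all three properties are equivalent. The implication $(SA)\Rightarrow(A)$ is immediate and was already recorded, while $(\widebar{SA})\Rightarrow(SA)$ is exactly the content of Theorem \ref{thm:3.2} under $(gL_0)$. Thus the whole weight of the theorem rests on the remaining implication $(A)\Rightarrow(\widebar{SA})$, and this is where assumption $(L_1)$ enters.

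First I would start from a $G$-arbitrage $\zeta_T=\sum_{i=0}^T\xi_i$ with $\xi_i\in L^0(-G_i,\mathcal F_i)$, $L_T(\zeta_T)\ge 0$ and $\mathbb P\{L_T(\zeta_T)>0\}>0$, and lift it to the $(\bar K_t)$ market by \emph{pocketing}, at every date, the slack that the frictionless liquidation leaves. Concretely, set $\hat\xi_i:=\xi_i+\bar L_i(-\xi_i)e_1$, where $e_1$ is the cash unit. Since $\bar L_i(-\hat\xi_i)=\bar L_i(-\xi_i)-\bar L_i(-\xi_i)=0$, we have $\hat\xi_i\in L^0(-\bar K_i,\mathcal F_i)$, so $\hat V_T:=\sum_{i=0}^T\hat\xi_i$ is an admissible portfolio on the $(\bar K_t)$ market. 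Because adding cash only shifts the liquidation value,
\[
\bar L_T(\hat V_T)=\bar L_T(\zeta_T)+\sum_{i=0}^T\bar L_i(-\xi_i)\ge \sum_{i=0}^T\delta_i(-\xi_i)\ge 0,
\]
using $\bar L_T(\zeta_T)\ge L_T(\zeta_T)\ge 0$ together with $\bar L_i(-\xi_i)=L_i(-\xi_i)+\delta_i(-\xi_i)\ge\delta_i(-\xi_i)$ and $L_i(-\xi_i)\ge 0$ (as $-\xi_i\in G_i$). This is where $(L_1)$ does the real work: whenever the bundle $-\xi_i$ has a non-zero risky coordinate, $\delta_i(-\xi_i)\ge c_i(\cdot)>0$, so a trade that merely breaks even against the frictional costs of the $(G_t)$ market produces a \emph{strictly positive} gain on the $(\bar K_t)$ market.

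It then remains to localize. After discarding the pure-cash trades (those with zero risky coordinate, which can only be cash withdrawals $\xi_i=(c,0)$ with $c\le 0$, and whose removal only increases $L_T$ and $\bar L_T$), I would let $t$ be the first date at which $\xi_t$ has a non-zero risky coordinate on a set of positive probability and put $B_t:=\{\xi_t^2\neq 0\}\in\mathcal F_t$. Restricting the pocketed strategy to $B_t$ and to dates $i\ge t$ (all earlier increments being zero) keeps every increment in $-\bar K_i$, and on $B_t$
\[
\bar L_T(\hat V_T)\ge \bar L_T(\zeta_T)+\sum_{i\ge t}\delta_i(-\xi_i)\ge \delta_t(-\xi_t)\ge c_t(-\xi_t^2)=:m_t>0,
\]
with $m_t$ an $\mathcal F_t$-measurable strictly positive random variable. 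This is precisely $(\widebar{SA})$, and invoking Theorem \ref{thm:3.2} under $(gL_0)$ returns $(SA)$, closing the cycle.

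The main obstacle I anticipate is the localization bookkeeping rather than the inequalities: one must verify that suppressing the trades before $t$ and restricting the pocketed strategy to $B_t$ leaves a genuinely self-financing $(\bar K_t)$-portfolio, that the reduction to ``first risky trade'' is legitimate, and that the earlier, possibly non-risky increments do not spoil the $\mathcal F_t$-measurable lower bound $m_t$. The conceptual point to get right is that the gain must be read off on the cone market, not on $(G_t)$: the per-trade slack $\delta_i$ is typically zero for the efficient $G$-trades (which sit on $\partial G_i$), so it can only be extracted after passing to $\bar K_i$, which is exactly why the detour through $(\widebar{SA})$ and the hypothesis $(gL_0)$ of Theorem \ref{thm:3.2} are indispensable for coming back to $(SA)$.
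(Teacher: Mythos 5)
Your proof is correct and follows essentially the same route as the paper's: the cycle $(SA)\Rightarrow(A)\Rightarrow(\widebar{SA})\Rightarrow(SA)$, with the core step $(A)\Rightarrow(\widebar{SA})$ obtained by discarding the pure cash-withdrawal increments, pocketing the frictionless slack via $\hat\xi_i=\xi_i+\bar L_i(-\xi_i)e_1$, localizing at the first time $t^\star$ with a nonvanishing (necessarily risky) increment on $B_{t^\star}=\{\xi_{t^\star}\neq 0\}$, and extracting the $\mathcal{F}_{t^\star}$-measurable lower bound from $(L_1)$, then closing via Theorem \ref{thm:3.2} under $(gL_0)$. The only (harmless) deviation is that you skip the paper's normalization $V_T=L_T(V_T)\cdot e_1$, using instead $\bar L_T(\zeta_T)\geq L_T(\zeta_T)\geq 0$ directly, which works equally well since $\bar L_T(\cdot+c\,e_1)=\bar L_T(\cdot)+c$.
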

\begin{proof}
We want to show that $(A) \implies (\widebar{SA})$. Assume $(A)$. Then there is $V_t=\sum_{u=0}^{t}\xi_u$, 
$\xi_u \in L^0
(-G_u,\mathcal{F}_u)$ such that $L_T(V_T)\geq0$ and $\mathbb{P}\{L_T(V_T)>0\}>0$. Let $\hat{\xi}_u=\xi_u 
\mathbbm{1}_{\xi_u \notin
-\mathbb{R}_+e_1\backslash\{0\}}$. For $\tilde{V}_t=\sum_{u=0}^t \hat{\xi}_u$ we have
\begin{eqnarray}
&&
L_t(V_t)=L_t\left( \tilde{V}_t+\sum_{u=0}^t \xi_u\mathbbm{1}_{\xi_u \in -\mathbb{R}_+e_1}\right)=L_t\left(
\tilde{V}_t+\sum_{u=0}^tL_u(\xi_u)e_1\mathbbm{1}_{\xi_u \in -\mathbb{R}_+e_1}\right)= \nonumber \\
&&L_t(\tilde{V}_t)+\sum_{u=0}^tL_u(\xi_u)\mathbbm{1}_{\xi_u \in -\mathbb{R}_+e_1}\leq L_t(\tilde{V}_t)
\end{eqnarray}
since for $\xi_u=(\xi_u^1,\xi_u^2)$ we have $\xi_u\mathbbm{1}_{\xi_u \in 
-\mathbb{R}_+e_1}=\xi_u^1\mathbbm{1}_{\xi_u \in
-\mathbb{R}_+e_1}e_1$ and  $\xi_u^1\mathbbm{1}_{\xi_u \in -\mathbb{R}_+e_1} =L_u(\xi_u)\mathbbm{1}_{\xi_u \in 
-\mathbb{R}_+e_1}\leq0$.
Consequently also $L_T(\tilde{V}_T)\geq0$ and $\mathbb{P}\{L_T(\tilde{V}_T)>0\}>0$. In what follows we 
shall assume that
$V_t=\sum_{u=0}^t\xi_u$ and $\xi_u \notin -\mathbb{R}_+e_1\backslash\{0\}$. Let 
$t^{\star}=\min\{t\geq0:\mathbb{P}\{\xi_t
\neq 0\}>0\}$ ($0$ means $(0,0)$ in our case).  Set $B_{t^{\star}}=\{\xi_{t^{\star}} \neq 0\}$. We 
have that
$\mathbb{P}(B_{t^{\star}})>0$. We may also assume that $V_T=L_T(V_T)\cdot e_1$, since as in the proof of 
Theorem \ref{thm:3.2} for
$\zeta_T=V_T-L_T(V_T)\cdot e_1$ we have $L_T(\zeta_T)=0$, so that  $\zeta_T \in L^0(G_T,\mathcal{F}_T)$
\begin{equation*}
L_T(V_T)\cdot e_1=V_T-\zeta_T=\sum_{u=0}^{T-1}\xi_u +\xi_T-\zeta_T
\end{equation*}
and $L_T(L_T(V_T)\cdot e_1)=L_T(V_T)\geq0$, $\mathbb{P}\{L_T(L_T(V_T)\cdot 
e_1)>0\}=\mathbb{P}\{L_T(V_T)>0\}>0$.\newline
On $B_{t^{\star}}$ we have (using $(L_1)$)
\begin{equation*}
\bar{L}_{t^{\star}}(-\xi_{t^{\star}})\geq 
L_{t^{\star}}(-\xi_{t^{\star}})+c_{t^{\star}}(\xi_{t^{\star}}^2)\geq
c_{t^{\star}}(\xi_{t^{\star}}^2) \quad (\text{since }L_{t^{\star}}(-\xi_{t^{\star}})\geq 0).
\end{equation*}
Let 
$\tilde{V}_T=\sum_{t=t^{\star}}^{T}\left(\xi_t+\bar{L}_t(-\xi_t)e_1\right)\mathbbm{1}_{B_{t^{\star}}}$ 
and
$\tilde{\xi}_t=\left(\xi_t+\bar{L}_t(-\xi_t)e_1\right)\mathbbm{1}_{B_{t^{\star}}}$. We have
\begin{equation*}
\bar{L}_t(-\tilde{\xi}_t)=\bar{L}_t\left(-\xi_t-\bar{L}_t(-\xi_t)e_1\right)=0 \quad \text{on 
}B_{t^{\star}}
\end{equation*}
so that $-\tilde{\xi}_t \in \bar{K}_t$ and $\tilde{\xi}_t \in -\bar{K}_t$ on $B_{t^{\star}}$. 
Therefore on $B_{t^{\star}}$
we have, taking into account that $V_T=L_T(V_T)\cdot e_1$
\begin{equation*}
\bar{L}_T(\tilde{V}_T)=\bar{L}_T\left(V_T+\sum_{t=t^{\star}}^T\bar{L}_t(-\xi_t)e_1
\right)=L_T(V_T)+\sum_{t=t^{\star}}^T\bar{L}_t(-\xi_t)\geq \sum_{t=t^{\star}}^T\bar{L}_t(-\xi_t) \geq
\bar{L}_{t^{\star}}(-\xi_{t^{\star}})\geq c_{t^{\star}}(\xi_{t^{\star}}^2)
\end{equation*}
(since $-\xi_t \in L^0(-G_t,\mathcal{F}_t)\subset L^0(-\bar{K}_t, \mathcal{F}_t)$ we have that 
$\bar{L}_t(-\xi_t)\geq 0$).
Consequently we have $(\widebar{SA})$ for portfolio $\tilde{V}_T$.\newline
Since by Theorem \ref{thm:3.2} $(\widebar{SA}) \equiv (SA)$ we therefore have $(A)\implies (SA)$. The 
implication $(SA) \implies (A)$
is obvious and we finally obtain that $(A)\equiv (SA)$.
\end{proof}
Following the proof of Theorem \ref{thm:3.3} we see that

\begin{Corollary}
Under $(L_1)$ we have $(A) \Longrightarrow (\widebar{SA})$.
\end{Corollary}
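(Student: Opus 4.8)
The plan is to observe that the proof of Theorem \ref{thm:3.3} already contains a complete proof of the implication $(A)\Longrightarrow(\widebar{SA})$ that never invokes $(gL_0)$: the hypothesis $(gL_0)$ enters Theorem \ref{thm:3.3} only at its last line, through the appeal to Theorem \ref{thm:3.2} which transports a $\bar{K}$-strong arbitrage back to a $G$-strong arbitrage, i.e.\ identifies $(\widebar{SA})$ with $(SA)$. So I would isolate and rerun exactly the first half of that argument, tracking which steps genuinely use $(L_1)$ and which are cost-free.

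First I would take an arbitrage $(A)$, that is a self-financing $G$-portfolio $V_T=\sum_{u=0}^T\xi_u$ with $\xi_u\in L^0(-G_u,\mathcal{F}_u)$, $L_T(V_T)\geq 0$ a.e.\ and $\mathbb{P}\{L_T(V_T)>0\}>0$. As in Theorem \ref{thm:3.3} I would strip off the pure cash-withdrawal increments by replacing each $\xi_u$ with $\xi_u\mathbbm{1}_{\xi_u\notin -\mathbb{R}_+e_1\setminus\{0\}}$; since each removed term contributes $L_u(\xi_u)\mathbbm{1}_{\xi_u\in-\mathbb{R}_+e_1}\leq 0$ and adding cash shifts $L$ additively, the liquidation value can only increase, so the arbitrage is preserved and I may assume $\xi_u\notin -\mathbb{R}_+e_1\setminus\{0\}$ for all $u$. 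I would then set $t^{\star}=\min\{t:\mathbb{P}\{\xi_t\neq 0\}>0\}$ and $B_{t^{\star}}=\{\xi_{t^{\star}}\neq 0\}$, so $\mathbb{P}(B_{t^{\star}})>0$, and normalise to $V_T=L_T(V_T)\cdot e_1$ by pushing the remainder $\zeta_T=V_T-L_T(V_T)\cdot e_1$, which satisfies $L_T(\zeta_T)=0$ and hence lies in $L^0(G_T,\mathcal{F}_T)$, into the terminal increment.

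The decisive step, and the only place $(L_1)$ is used, is at time $t^{\star}$. First I would note that the stripping guarantees $\xi_{t^{\star}}^2\neq 0$ on $B_{t^{\star}}$: if the asset component vanished, then $-\xi_{t^{\star}}\in G_{t^{\star}}$ would force $\xi_{t^{\star}}^1\leq 0$, i.e.\ $\xi_{t^{\star}}\in -\mathbb{R}_+e_1$, which was excluded. Hence $(L_1)$ applies and, using $L_{t^{\star}}(-\xi_{t^{\star}})\geq 0$ (as $-\xi_{t^{\star}}\in G_{t^{\star}}$), gives $\bar{L}_{t^{\star}}(-\xi_{t^{\star}})\geq L_{t^{\star}}(-\xi_{t^{\star}})+c_{t^{\star}}(\xi_{t^{\star}}^2)\geq c_{t^{\star}}(\xi_{t^{\star}}^2)>0$ on $B_{t^{\star}}$. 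I would then form the $\bar{K}$-portfolio $\tilde{\xi}_t=\bigl(\xi_t+\bar{L}_t(-\xi_t)e_1\bigr)\mathbbm{1}_{B_{t^{\star}}}$, verify $\bar{L}_t(-\tilde{\xi}_t)=0$ so that $\tilde{\xi}_t\in -\bar{K}_t$, and, since $V_T=L_T(V_T)\cdot e_1$ and each $\bar{L}_t(-\xi_t)\geq 0$ (because $-\xi_t\in\bar{K}_t$), telescope to $\bar{L}_T(\tilde{V}_T)=L_T(V_T)+\sum_{t=t^{\star}}^T\bar{L}_t(-\xi_t)\geq \bar{L}_{t^{\star}}(-\xi_{t^{\star}})\geq c_{t^{\star}}(\xi_{t^{\star}}^2)>0$ on $B_{t^{\star}}$, which is precisely $(\widebar{SA})$.

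The point I would emphasise is that this construction terminates at a $\bar{K}$-strong arbitrage and is never transported back to the $G$-market; it is exactly that return trip in Theorem \ref{thm:3.3} that calls on Theorem \ref{thm:3.2} and hence on $(gL_0)$. Consequently no uniform control of $\delta_t(\lambda(0,y))/\lambda$ is needed, and $(L_1)$ alone yields $(A)\Longrightarrow(\widebar{SA})$. There is no serious obstacle beyond bookkeeping: one must perform the two normalisations (removing cash withdrawals and collapsing $V_T$ to $L_T(V_T)\cdot e_1$) before perturbing the increments, so that the $\tilde{\xi}_t$ stay adapted and the event $B_{t^{\star}}$ is genuinely $\mathcal{F}_{t^{\star}}$-measurable of positive probability, and one must check that the stripping is what makes $\xi_{t^{\star}}^2\neq 0$, which is the sole ingredient letting $(L_1)$ bite.
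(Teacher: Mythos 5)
Your proposal is correct and is exactly the paper's intended argument: the Corollary is stated with the one-line justification ``Following the proof of Theorem \ref{thm:3.3}'', and what you carry out --- stripping the cash-withdrawal increments, locating $t^{\star}$ and $B_{t^{\star}}$, collapsing $V_T$ to $L_T(V_T)\cdot e_1$, and invoking $(L_1)$ at $t^{\star}$ to produce the $\bar{K}$-portfolio $\tilde{\xi}_t=\bigl(\xi_t+\bar{L}_t(-\xi_t)e_1\bigr)\mathbbm{1}_{B_{t^{\star}}}$ with $\bar{L}_T(\tilde{V}_T)\geq c_{t^{\star}}(\xi_{t^{\star}}^2)>0$ --- is precisely the first half of that proof, with the correct observation that $(gL_0)$ enters only through the final appeal to Theorem \ref{thm:3.2}. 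Your explicit verification that the stripping forces $\xi_{t^{\star}}^2\neq 0$ on $B_{t^{\star}}$ (so that $(L_1)$ applies) is a detail the paper leaves implicit, but it introduces no new method.
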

We can also adapt the proof of Theorem \ref{thm:3.3} to obtain the following result

\begin{Corollary}
Assuming that $\|\overline{S}_T(0)\|_\infty <\infty$ under $(L_1)$ we have that $(BA)$ implies 
$\widebar{BSA}$. Assuming additionally
$(L_0)$ we obtain that $(BA)$ is equivalent to $(BSA)$ and also to
$\widebar{BSA}$.
\end{Corollary}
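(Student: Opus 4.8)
The plan is to rerun the construction from the proof of Theorem~\ref{thm:3.3} (equivalently, from the Corollary giving $(A)\Rightarrow(\widebar{SA})$ under $(L_1)$), this time carrying a \emph{deterministic} bound on the portfolio through every transformation; this bookkeeping is precisely what upgrades $(\widebar{SA})$ to $(\widebar{BSA})$. First I dispose of the cheap directions. A bounded strong arbitrage, being $0$ off its support $B_t$, is a bounded arbitrage, since $L_T(0)=0\ge0$ while $L_T(V_T)\ge m_t>0$ on $B_t$; thus $(BSA)\Rightarrow(BA)$. Moreover the Corollary immediately preceding this one already gives $(\widebar{BSA})\equiv(BSA)$ under $(L_0)$ and bounded ask prices. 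Hence it suffices to prove the single implication $(BA)\Rightarrow(\widebar{BSA})$: this is Part~1, and together with the two observations above it closes the cycle $(BA)\Rightarrow(\widebar{BSA})\Rightarrow(BSA)\Rightarrow(BA)$ needed for Part~2.

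So assume $(BA)$: a self-financing $V_T=\sum_{u=0}^T\xi_u$ with $\xi_u\in L^0(-G_u,{\cal F}_u)$, $|\xi_u|\le M$ for a constant $M$, $L_T(V_T)\ge0$ and $\mathbb{P}\{L_T(V_T)>0\}>0$. I would reproduce the steps of Theorem~\ref{thm:3.3}, verifying at each one that the bound stays a constant. Stripping the pure cash--outflow moves $\xi_u\in-\mathbb{R}_+e_1\setminus\{0\}$ only decreases $|\xi_u|$ and does not lower $L_T$, so $M$ survives; put $t^\star=\min\{t:\mathbb{P}\{\xi_t\neq0\}>0\}$ and $B_{t^\star}=\{\xi_{t^\star}\neq0\}$. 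Note $t^\star<T$, since otherwise $V_T=\xi_T$ and \eqref{liqin} applied to $\xi_T$ and $-\xi_T$ would force $L_T(V_T)=0$, contradicting $\mathbb{P}\{L_T(V_T)>0\}>0$. The reduction to $V_T=L_T(V_T)\cdot e_1$, obtained by replacing $\xi_T$ with $\xi_T-\zeta_T$ for $\zeta_T=V_T-L_T(V_T)\cdot e_1\in L^0(G_T,{\cal F}_T)$, is where the hypothesis first enters: since $\bar L_T$ involves only the time-$T$ prices and $\max\{\underline{S}_T(\infty),\overline{S}_T(\infty)\}\le\overline{S}_T(0)$, one gets $0\le L_T(V_T)\le\bar L_T(V_T)\le(T+1)M(1+\|\overline{S}_T(0)\|_\infty)$, so $|\xi_T-\zeta_T|\le|L_T(V_T)|+(T+1)M$ is still bounded by a constant $M'$. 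As $t^\star<T$ the component $\xi_{t^\star}$ is untouched by this reduction, and as in Theorem~\ref{thm:3.3} the removal of pure cash moves forces $\xi_{t^\star}^2\neq0$ on $B_{t^\star}$, so $(L_1)$ gives $\bar L_{t^\star}(-\xi_{t^\star})\ge L_{t^\star}(-\xi_{t^\star})+c_{t^\star}(\xi_{t^\star}^2)\ge c_{t^\star}(\xi_{t^\star}^2)>0$ there.

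Now set $\tilde\xi_t=(\xi_t+\bar L_t(-\xi_t)\cdot e_1)\mathbbm{1}_{B_{t^\star}}$ for $t^\star\le t\le T$ and $\tilde V_T=\sum_{t=t^\star}^T\tilde\xi_t$. Then $\bar L_t(-\tilde\xi_t)=0$ on $B_{t^\star}$, so $\tilde\xi_t\in L^0(-\bar K_t,{\cal F}_t)$, and since each $\bar L_t(-\xi_t)\ge0$ while $V_T=L_T(V_T)\cdot e_1$,
\[
\bar L_T(\tilde V_T)=L_T(V_T)+\sum_{t=t^\star}^T\bar L_t(-\xi_t)\ge\bar L_{t^\star}(-\xi_{t^\star})\ge c_{t^\star}(\xi_{t^\star}^2)=:m_{t^\star}>0
\]
on $B_{t^\star}$; this is already a $(\widebar{SA})$. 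The crux, which I expect to be the one genuine obstacle, is to check that $\tilde\xi_t$ is bounded by a \emph{constant}, for that is exactly what separates $(\widebar{BSA})$ from $(\widebar{SA})$. Writing $\bar L_t(-\xi_t)=-\xi_t^1+(-\xi_t^2)\,p_t$ with $p_t\in\{\underline{S}_t(\infty),\overline{S}_t(\infty)\}$, the cash coordinate of $\tilde\xi_t$ collapses to $-\xi_t^2\,p_t$, whence $|\tilde\xi_t|=|\xi_t^2|(1+p_t)\le M'(1+\overline{S}_t(0))$, using $p_t\le\overline{S}_t(0)$ and $|\xi_t|\le M'$. This must be a genuine constant, not merely an a.s.\ finite random variable; it is here that $\|\overline{S}_t(0)\|_\infty<\infty$ is invoked, replacing $\overline{S}_t(0)$ by its essential supremum and yielding the deterministic bound $M''=M'\bigl(1+\max_{t\le T}\|\overline{S}_t(0)\|_\infty\bigr)$. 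Thus $\tilde V_T$ is a \emph{bounded} $\bar K$ strong arbitrage, i.e.\ $(\widebar{BSA})$ holds, which proves Part~1.

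For Part~2 I add $(L_0)$ and invoke the preceding Corollary to obtain $(\widebar{BSA})\equiv(BSA)$; combined with $(BA)\Rightarrow(\widebar{BSA})$ and $(BSA)\Rightarrow(BA)$ this gives $(BA)\equiv(BSA)\equiv(\widebar{BSA})$. I would flag one point about the hypotheses: the estimate in the previous paragraph genuinely needs $\|\overline{S}_t(0)\|_\infty<\infty$ at \emph{every} $t\le T$, because the intermediate corrections $\bar L_t(-\xi_t)$ carry the prices at all times $t^\star\le t\le T$, not only at $T$. Read this way the assumption coincides with the bounded-ask-price condition used in the Corollary cited for Part~2, so both parts rest on a consistent hypothesis.
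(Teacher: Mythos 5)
Your proposal is correct and follows essentially the same route the paper intends: the Corollary is stated there with the one-line proof ``adapt the proof of Theorem \ref{thm:3.3}'', and your argument is exactly that adaptation --- the stripping of pure cash moves, the reduction to $V_T=L_T(V_T)\cdot e_1$, the corrections $\bar{L}_t(-\xi_t)e_1$ on $B_{t^\star}$ with $(L_1)$ supplying the lower bound $c_{t^\star}(\xi_{t^\star}^2)$ --- carried out while tracking deterministic bounds via $\max\{\underline{S}_t(\infty),\overline{S}_t(\infty)\}\leq \overline{S}_t(0)$, and then closing the cycle through the preceding Corollary $(\widebar{BSA})\equiv(BSA)$ under $(L_0)$ and the trivial $(BSA)\Rightarrow(BA)$. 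Your flag that the boundedness of the intermediate corrections genuinely requires $\|\overline{S}_t(0)\|_\infty<\infty$ for all $t\leq T$, not merely $t=T$, is a fair and accurate reading of the paper's tersely stated hypothesis, consistent with the formulation of the Corollary it invokes.
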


\section{Asymptotic arbitrage}
Assume we are given two markets $M_1$ and $M_2$ with bid and ask curves $(\underline{S}_t^1, \overline{S}_t^1)$ 
and 
$(\underline{S}_t^2, \overline{S}_t^2)$, adapted to filtration ${\cal F}_t$ respectively. We also assume that 
these bid and ask curves 
satisfy all conditions (a1)-(a5)  formulated in the Introduction. By analogy to \eqref{liquid} and \eqref{solv} 
we define liquidation 
functions $L_t^1$ and $L_t^2$ and solvent sets $G_t^1$ and $G_t^2$ respectively for $t=0,1,\ldots, T$.

We say that {\it arbitrage on $M_1$ market implies weak asymptotic arbitrage on $M_2$ market} if there is a sequence 
$\zeta_T^n=\sum_{t=0}^T \xi_t^n$ with $\xi_t^n\in -G_t^1$ of 
portfolios taking values in $R_T^0(G^1):=\sum_{t=0}^T L^0(-G_t^1,{\cal{F}}_t)$ for which for each $n\in 
\mathbb{N}$  we have an 
arbitrage i.e. $\mathbb{P}\left\{L_T^1(\zeta_T^n)\geq 0\right\}=1$ and $\mathbb{P}\left\{L_T^1(\zeta_T^n) > 
0\right\}>0$ and 
also
 $\lim_{n\to \infty}\mathbb{P}\left\{\forall_t  \ \xi_t^n\in -G_t^2\right\}=1$
$\liminf_{n\to \infty}\mathbb{P}\left\{L_T^2(\zeta_T^n)\geq 0\right\}=1$ and
$\liminf_{n\to \infty}\mathbb{P}\left\{L_T^2(\zeta_T^n) > 0\right\}>0$.
We also use notation $R_T^0(G^2):=\sum_{t=0}^T L^0(-G_t^2,{\cal{F}}_t)$.

We say that {\it asymptotic arbitrage on $M_1$ market implies weak asymptotic arbitrage on $M_2$ market} if there is a sequence $\zeta_T^n=\sum_{t=0}^T \xi_t^n$ with $\xi_t^n\in -G_t^1$ of portfolios taking values in $R_T^0(G^1):=\sum_{t=0}^T L^0(-G_t^1,{\cal{F}}_t)$ such that $\liminf_{n\to \infty}\mathbb{P}\left\{L_T^1(\zeta_T^n)\geq 0\right\}=1$ and $\liminf_{n\to \infty}\mathbb{P}\left\{L_T^1(\zeta_T^n) > 
0\right\}>0$ and also $\lim_{n\to \infty} \mathbb{P}\left\{\forall_t \ \xi_t^n\in -G_t^2\right\}=1$,  
$\liminf_{n\to \infty}\mathbb{P}\left\{L_T^2(\zeta_T^n)\geq 0\right\}=1$ and
$\liminf_{n\to \infty}\mathbb{P}\left\{L_T^2(\zeta_T^n) > 0\right\}>0$.

We say that {\it asymptotic arbitrage on $M_1$ market implies asymptotic arbitrage on $M_2$ market} if there is a sequence $\zeta_T^n=\sum_{t=0}^T \xi_t^n$ with $\xi_t^n\in -G_t^1$ of portfolios taking values in $R_T^0(G^1):=\sum_{t=0}^T L^0(-G_t^1,{\cal{F}}_t)$ 
such that 

$\liminf_{n\to \infty}\mathbb{P}\left\{L_T^1(\zeta_T^n)\geq 0\right\}=1$ and $\liminf_{n\to \infty}\mathbb{P}\left\{L_T^1(\zeta_T^n) > 
0\right\}>0$ and 

 $\mathbb{P}\left\{\forall_t \ \xi_t^n \in -G_t^2)\right\}=1$, 
$\liminf_{n\to \infty}\mathbb{P}\left\{L_T^2(\zeta_T^n)\geq 0\right\}=1$ and
$\liminf_{n\to \infty}\mathbb{P}\left\{L_T^2(\zeta_T^n) > 0\right\}>0$.

Let $M^\alpha$ be a market with proportional transaction costs with bid prices 
$(\underline{S}_t=(1-\alpha)\underline{S}_t(\infty))$ 
and ask prices $(\overline{S}_t=(1+\alpha)\overline{S}_t(\infty))$, where $0<\alpha \leq \min\left\{ 
{\overline{S}_t(0) \over \overline{S}_t(\infty)}-1, 1-{\underline{S}_t(0) \over 
\underline{S}_t(\infty)}\right\}$, $\mathbb{P}$ a.s. (note that $\overline{S}_t\leq \overline{S}_t(0)$ and 
$\underline{S}_t\geq \underline{S}_t(0)$). For such market define
\begin{equation}
{L}_t^\alpha(x,y)=\mathbbm{1}_{y\geq 0} \left(x+y (1-\alpha)\underline{S}_t(\infty)\right) + 
\mathbbm{1}_{y<0} \left(x+y
(1+\alpha)\overline{S}_t(\infty)\right)=x+{L}_t^\alpha(0,y). 
\end{equation}
and the solvent set $\bar{K}_t^\alpha:=\left\{(x,y): L_t^\alpha (x,y)\geq 0\right\}$.  
We have 

\begin{Lemma}\label{Lem4} Assume that 
\begin{enumerate}
\item[(B1)] $\|\overline{S}_t(\infty)\|_\infty \leq S<\infty $ for $t=0,1,\ldots,T$ 
\item[(B2)] $\max\left\{\|\underline{S}_t(z)-\underline{S}_t(\infty)\|_{\infty},
\|\overline{S}_t(z)-\overline{S}_t(\infty)\|_{\infty}\right\}\leq\beta(z)$
where $\beta(z)$ decreases to $0$ as $z\to \infty$, and for a given $\ve>0$ such that $\alpha 
S>\varepsilon>0$ there is $M(\varepsilon)$ such that for $z\geq 
M(\varepsilon)$ we have $\beta(z)\leq \varepsilon$. 
\end{enumerate}
 Then line $y={-x \over (1+\alpha) \overline{S}_t(\infty)}$ is above the curve 
$y=-a_t^{-1}(x)$  for $x\geq 0$ and  $y\leq 0$ when 
$y\leq
\min\left\{-M(\varepsilon), {a_0 \over \varepsilon-\alpha S}\right\}$.  Furthermore line $y={-x \over 
(1-\alpha)\underline{S}_t(\infty)}$ is above the curve $y=b_t^{-1}(-x)$ for $x\leq 0$ and $y\geq 0$ when
$y\geq \max \left\{M(\varepsilon), {-b_0 \over \alpha S}\right\}$. 
\end{Lemma}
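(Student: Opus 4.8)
The plan is to reduce both geometric claims to a pair of one-dimensional inequalities and then to verify each of them by separating the fixed cost from the (asymptotically proportional) price.

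I would first describe the two boundary curves by the number of traded assets. For $y<0$ the curve $y=-a_t^{-1}(x)$ consists of the points $(a_t(l),-l)$ with $l=-y>0$, because $L_t(a_t(l),-l)=a_t(l)-a_t(l)=0$. The line $y=\frac{-x}{(1+\alpha)\overline{S}_t(\infty)}$ at the abscissa $x=a_t(l)$ has ordinate $\frac{-a_t(l)}{(1+\alpha)\overline{S}_t(\infty)}$, so it lies above the curve precisely when
\[
a_t(l)\le l\,(1+\alpha)\overline{S}_t(\infty).
\]
In the same way, the part of $\partial G_t$ with $x\le 0$ is swept by $(-b_t(m),m)$ with $m=y>0$ (the regime $b_t(m)>0$), and the line $y=\frac{-x}{(1-\alpha)\underline{S}_t(\infty)}$ lies above it exactly when
\[
b_t(m)\ge m\,(1-\alpha)\underline{S}_t(\infty).
\]
Thus the whole statement reduces to these two scalar inequalities, which I must establish for $l=-y$ and $m=y$ beyond the announced thresholds.

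For the ask inequality I would insert $a_t(l)=a_0+l\overline{S}_t(l)$ and rewrite the target as
\[
a_0\le l\bigl[(1+\alpha)\overline{S}_t(\infty)-\overline{S}_t(l)\bigr]
= l\bigl[\alpha\overline{S}_t(\infty)-(\overline{S}_t(l)-\overline{S}_t(\infty))\bigr].
\]
By (a3) the difference $\overline{S}_t(l)-\overline{S}_t(\infty)$ is nonnegative, and by (B2) it is at most $\beta(l)\le\varepsilon$ once $l\ge M(\varepsilon)$; hence the bracket is bounded below by $\alpha\overline{S}_t(\infty)-\varepsilon$, which is positive since $\alpha S>\varepsilon$ while, by (B1), $\overline{S}_t(\infty)$ stays comparable to $S$. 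It then suffices to take $l$ so large that $l\,(\alpha\overline{S}_t(\infty)-\varepsilon)\ge a_0$; bounding the price through (B1) this holds once $l\ge \frac{a_0}{\alpha S-\varepsilon}$, i.e. $y=-l\le\frac{a_0}{\varepsilon-\alpha S}$. Intersecting this with the requirement $l\ge M(\varepsilon)$, i.e. $y\le -M(\varepsilon)$, produces exactly the condition $y\le\min\{-M(\varepsilon),\frac{a_0}{\varepsilon-\alpha S}\}$. The bid inequality is handled symmetrically: writing $b_t(m)=b_0+m\underline{S}_t(m)$ and using (a1) and (B2) to bound $\underline{S}_t(\infty)-\underline{S}_t(m)$ by $\varepsilon$ for $m\ge M(\varepsilon)$, the claim becomes $-b_0\le m(\alpha\underline{S}_t(\infty)-\varepsilon)$, which holds for $m$ past the stated lower bound.

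The delicate point is not the algebra but the interplay between the fixed cost and the fact that $\overline{S}_t(l)\to\overline{S}_t(\infty)$ (respectively $\underline{S}_t(m)\to\underline{S}_t(\infty)$) only pointwise and not uniformly in the volume; this is precisely why the conclusion is confined to large $|y|$, where (B2) makes the proportional discrepancy smaller than $\varepsilon$ and the linear term $\alpha\overline{S}_t(\infty)\,l$ eventually dominates the constant $a_0$. Care is also needed to keep all estimates in the $L^\infty$/almost sure sense demanded by (B1)--(B2), and to replace $\overline{S}_t(\infty)$, $\underline{S}_t(\infty)$ by the uniform bound $S$ in the final thresholds, which is the step where the hypothesis $\alpha S>\varepsilon$ is used.
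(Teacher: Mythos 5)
Your reduction to the scalar inequalities $a_t(l)\le l(1+\alpha)\overline{S}_t(\infty)$ and $b_t(m)\ge m(1-\alpha)\underline{S}_t(\infty)$, followed by isolating the fixed cost, bounding the price discrepancy by $\beta(z)\le\varepsilon$ via (B2) beyond $M(\varepsilon)$, and passing to the uniform bound $S$ via (B1), is essentially identical to the paper's proof, merely phrased through the parametrization $(a_t(l),-l)$ and $(-b_t(m),m)$ of the boundary instead of the inverse functions $a_t^{-1}$, $b_t^{-1}$. You match the paper step for step, including the same delicate final substitution of the random prices $\overline{S}_t(\infty)$, $\underline{S}_t(\infty)$ by $S$ to obtain the deterministic thresholds $y\le\min\left\{-M(\varepsilon),{a_0\over \varepsilon-\alpha S}\right\}$ and $y\ge\max\left\{M(\varepsilon),{-b_0\over\alpha S}\right\}$, so there is nothing to add.
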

\begin{proof}
 Let $y\geq -a_t^{-1}(x)$. Clearly $x \geq a_t(-y)$ and
we have $x\geq a_0-y\overline{S}_t(-y)$ and $x=-y(1+\alpha)\overline{S}_t(\infty)$. Consequently 
$a_0-y\overline{S}_t(-y)\leq 
-y(1+\alpha)\overline{S}_t(\infty)$ and for $-y\geq M(\varepsilon)$ 
\begin{equation}a_0 \leq y(\overline{S}_t(-y)-(1+\alpha)\overline{S}_t(\infty))\leq -y(-\varepsilon+\alpha 
\overline{S}_t(\infty))\leq -y(-\varepsilon+\alpha S)
\end{equation}
and $-y\geq {a_0\over -\varepsilon+\alpha S}$.
Assume now that $y\geq b_t^{-1}(-x)$. Then $-x\leq b_0+y\underline{S}_t(y)$ and since $-x=y(1-\alpha) 
\underline{S}_t(\infty)$  we 
have for $y\geq M$
\begin{equation}
b_0\geq y((1-\alpha)\underline{S}_t(\infty)- \underline{S}_t(y))\geq y (-\alpha \underline{S}_t(\infty))\geq 
-y \alpha S.
\end{equation}
Therefore $y\geq {-b_0 \over \alpha S}$.
\end{proof}
We have
\begin{Proposition}\label{prop3}
Under (B1) and (B2) 
arbitrage on $M^\alpha$ market implies weak asymptotic arbitrage on the $(G_t)$ market (with bid and ask curves 
$(\underline{S}_t, \overline{S}_t)$).
\end{Proposition}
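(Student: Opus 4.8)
The plan is to start from the given $M^\alpha$ arbitrage $\zeta_T=\sum_{t=0}^T\xi_t$ with $\xi_t\in L^0(-\bar K_t^\alpha,\mathcal F_t)$, $L_T^\alpha(\zeta_T)\ge 0$ and $\mathbb P\{L_T^\alpha(\zeta_T)>0\}>0$, and to produce the required sequence by a single device: liquidate the terminal position to cash and then scale the whole strategy by $n$. First I would reduce to a pure-cash terminal position. Set $g:=L_T^\alpha(\zeta_T)\ge 0$ and $\eta_T:=\zeta_T-g\,e_1$; since $L_T^\alpha(\eta_T)=L_T^\alpha(\zeta_T)-g=0$ we have $\eta_T\in\bar K_T^\alpha$, and because $\bar K_T^\alpha$ is a convex cone (so $\bar K_T^\alpha+\bar K_T^\alpha\subset\bar K_T^\alpha$) the replacement $\xi_T\mapsto\xi_T-\eta_T\in-\bar K_T^\alpha$ leaves all earlier increments untouched while changing the terminal position to $g\,e_1$. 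Thus I may assume from the outset that $\zeta_T=g\,e_1$ with $g\ge 0$ and $\mathbb P\{g>0\}>0$.

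The second ingredient is to read Lemma \ref{Lem4} as an inclusion of solvent sets in the large-holdings regime. Fix $\varepsilon\in(0,\alpha S)$ and let $y_0=y_0(\varepsilon)$ be the deterministic threshold furnished by Lemma \ref{Lem4}: for $y\le\min\{-M(\varepsilon),\,a_0/(\varepsilon-\alpha S)\}$ the line $y=\frac{-x}{(1+\alpha)\overline{S}_t(\infty)}$ lies above the curve $y=-a_t^{-1}(x)$, and symmetrically on the selling side. In the $(x,y)$-plane this says precisely that the cheaper proportional-cost cone sits inside the concave solvent set for large positions, i.e. $\bar K_t^\alpha\cap\{|y|\ge y_0\}\subset G_t$ for every $t\le T$. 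On the remaining strip $|y|<y_0$ I will only use $\mathbb R_+^2\subset G_t$.

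Now I would define the candidate sequence by scaling, $\xi_t^n:=n\,\xi_t$ and $\zeta_T^n:=\sum_{t=0}^T\xi_t^n=n\,g\,e_1$, and verify the four requirements of weak asymptotic arbitrage. For the $M^\alpha$-arbitrage clause, positive homogeneity of $L_T^\alpha$ gives $L_T^\alpha(\zeta_T^n)=n\,g\ge0$ and $\mathbb P\{L_T^\alpha(\zeta_T^n)>0\}=\mathbb P\{g>0\}>0$ for every $n$, while $\xi_t^n\in-\bar K_t^\alpha$ since $\bar K_t^\alpha$ is a cone. For admissibility on the $(G_t)$ market, fix $\omega$ and $t$: $-\xi_t^n=n(-\xi_t)\in\bar K_t^\alpha$; if the asset coordinate of $-\xi_t(\omega)$ is nonzero then its modulus times $n$ exceeds $y_0$ for all large $n$, so Lemma \ref{Lem4} yields $-\xi_t^n\in G_t$, whereas if that coordinate vanishes then $-\xi_t^n\in\mathbb R_+^2\subset G_t$. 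Hence $\mathbbm{1}\{\forall t:\ \xi_t^n\in-G_t\}\to1$ pointwise, and dominated convergence gives $\mathbb P\{\forall t:\ \xi_t^n\in-G_t\}\to1$. Finally, because the terminal position is pure cash, $L_T(\zeta_T^n)=L_T(n g,0)=n g+(b_0)^+=n g\ge0$ identically (using $b_0\le0$), which gives the nonnegativity clause with probability $1$, and $\mathbb P\{L_T(\zeta_T^n)>0\}=\mathbb P\{g>0\}>0$ gives the strict clause.

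The step I expect to be the crux is the admissibility clause, which is exactly what Lemma \ref{Lem4} is built for: scaling pushes every genuinely risky increment into the regime where feasibility under the cheaper proportional costs forces feasibility under the concave costs. The pure-cash reduction is what makes the nonnegativity clause hold identically rather than merely asymptotically; without it the scaled terminal position could land on the part of $\partial\bar K_T$ where $\bar L_T=0$ but $L_T<0$, and the $\ge0$ requirement would fail on a set of positive probability. It is worth noting that this reduction also dispenses with any appeal to the $\delta_t$-estimate for the terminal liquidation, since $L_T$ is computed exactly on pure cash.
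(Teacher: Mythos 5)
Your proof is correct, and it reaches the conclusion by a genuinely different route than the paper. The paper keeps the original terminal position and works with explicit $\varepsilon$-bookkeeping: it truncates the asset coordinates at deterministic levels $d_t$ (retaining probability $1-\varepsilon$ per date), applies Lemma \ref{Lem4} to get deterministic multipliers $n_t$ and $n=n_0+\cdots+n_T$, then handles the terminal liquidation by a second truncation level $d$ and a decomposition of the terminal event into $\Omega^1(d)\cup\Omega^2(d)\cup\Omega^3(d)$ (large terminal asset position, pure positive cash, exactly zero), applies Lemma \ref{Lem4} once more to get $n(d)$, and finally diagonalizes over $\varepsilon_n\to 0$ to produce the sequence $N_n$. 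You instead perform the pure-cash reduction $\zeta_T\mapsto g\,e_1$ with $g=L_T^\alpha(\zeta_T)$ at the outset (the same device the paper uses in the proof of Theorem \ref{thm:3.2}, justified exactly as you do via $L_T^\alpha(\eta_T)=0$ and $\bar K_T^\alpha+\bar K_T^\alpha\subset\bar K_T^\alpha$), and then scale by $n$ with pointwise convergence plus dominated convergence. What your route buys: the terminal liquidation becomes exact, $L_T(ng,0)=ng$ since $b_0\le 0$, so the nonnegativity clause holds with probability one for \emph{every} $n$ and the strict clause has the constant probability $\mathbb{P}\{g>0\}=\mathbb{P}\{L_T^\alpha(\zeta_T)>0\}$ --- this eliminates the paper's tripartite terminal decomposition and its $\varepsilon$-losses, and the diagonalization disappears because only the admissibility clause $\mathbb{P}\{\forall t:\ \xi_t^n\in -G_t\}\to 1$ remains asymptotic, which dominated convergence handles (no measurable selection of the random threshold $n_0(\omega)$ is needed, since only convergence of probabilities is required --- precisely the ``weak'' in weak asymptotic arbitrage). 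What the paper's route buys in exchange: its deterministic-multiplier-on-high-probability-events template transfers verbatim to the proofs of Theorem \ref{thmm} and the non-divisible-assets results, where the starting strategy itself varies with $n$ and a plain pointwise scaling argument would not apply directly. Two small points worth making explicit in your write-up: your reading of Lemma \ref{Lem4} as the inclusion $\bar K_t^\alpha\cap\{|y|\ge y_0\}\subset G_t$ with a single deterministic $y_0$ is legitimate because $M(\varepsilon)$, $a_0$, $b_0$, $\alpha$, $S$ are deterministic and $t$ ranges over finitely many dates (take the maximum of the thresholds), and your zero-asset case is complete because $-\xi_t\in\bar K_t^\alpha$ with vanishing second coordinate forces $-\xi_t^1=\bar L_t^\alpha(-\xi_t)\ge 0$, hence $-\xi_t\in\mathbb{R}_+^2\subset G_t$.
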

\begin{proof}
Arbitrage on $M^\alpha$ market means that there is  $\zeta_T \in R_T^0(K^\alpha):=\sum_{t=0}^T 
L^0(-\bar{K}_t^\alpha,{\cal{F}}_t)$
such that  $\mathbb{P}\left\{L_T^\alpha(\zeta_T)\geq 0\right\}=1$ and $\mathbb{P}\left\{L_T^\alpha(\zeta_T) > 
0\right\}>0$ and 
$\zeta_T=\sum_{i=0}^T \xi_i$, where $\xi_i=(\xi_i^1,\xi_i^2)$ is ${\cal F}_i$ measurable and takes values in 
$-\bar{K}_i^\alpha$. Consider  
 $\varepsilon>0$ such that $\min\left\{{1\over T},\mathbb{P}\left\{L_T^\alpha(\zeta_T) > 
0\right\}\right\}>2\varepsilon>0$. For $t=0,1,\ldots,T$ there is a deterministic $d_t>0$ such that 
$\mathbb{P}\left\{|\xi_t^2|\geq d_t \ or \ \xi_t^2=0\right\}\geq 1-\varepsilon$. Then by Lemma \ref{Lem4} for each  $t=0,1,\ldots,T$ there is a deterministic $n_t$ such that 
$n_t \xi_t\in -G_t$ on the set $\Omega_t=\left\{|\xi_t^2|\geq d_t \ or \ \xi_t^2=0\right\}$, and for 
$n=n_0+n_1+\ldots + n_T$ we also 
have $n \xi_t \in -G_t$ for $t=0,1,\ldots,T$ on the set $\cap_{t=0}^T\Omega_t$. Clearly 
$\mathbb{P}\left\{\cap_{t=0}^T\Omega_t\right\}\geq 1-T\varepsilon$ 
and for $n\zeta_T$ we have an arbitrage on the market $M^\alpha$. Moreover  $\mathbb{P}\left\{\forall_t \ n\xi_t\in -G_t\right\}\geq 1-T\varepsilon$ and it is satisfied also for any $\bar{n}\geq n$. Since 
$\mathbb{P}\left\{L_T^\alpha(n\zeta_T)\geq 0\right\}=1$ and 
$\mathbb{P}\left\{L_T^\alpha(n\zeta_T) > 0\right\}>0$ there is a deterministic $d>0$ such that 
$\mathbb{P}\left\{\Omega(d)\right\}\geq 1-\varepsilon$ 
with $\Omega(d)=\Omega^1(d)\cup \Omega^2(d) \cup \Omega^3(d)$, where $\Omega^1(d)=\left\{|n\zeta_T^2|\geq d \right\}$, $\Omega^2(d)=\left\{\zeta_T^2=0,\zeta_T^1>0\right\}$ and $\Omega^3(d)=\left\{\zeta_T=0\right\}$.  
By Lemma \ref{Lem4} there is a 
deterministic $n(d)$ such that $n(d)n\zeta_T\in G_T^0\cup\left\{0\right\}$ on $\Omega^1(d)$ and consequently $L_T(n(d)n\zeta_T)>0$ on $\Omega^1(d)$. Moreover $L_T(n(d)n\zeta_T)>0$ on $\Omega^2(d)$ and $L_T(n(d)n\zeta_T)=0$ on 
 $\Omega^3(d)$. Summarizing we have $\mathbb{P}\left\{\forall_t \ n(d)n\xi_t \in -G_t\right\}\geq 1-T\varepsilon$, $\mathbb{P}\left\{L_T(n(d)n\zeta_T)\geq 0\right\}=\mathbb{P}\left\{\Omega(d)\right\}\geq 1-\varepsilon$ and 
\begin{eqnarray}
&&\mathbb{P}\left\{L_T(n(d)n\zeta_T) > 0\right\}=
\mathbb{P}\left\{\Omega^1(d)\cup \Omega^2(d)\right\}\geq 1-\varepsilon - \mathbb{P}\left\{\Omega^3(d)\right\}\geq  \nonumber \\
&& 1-\varepsilon - \mathbb{P}\left\{L_T^\alpha(\zeta_T)=0\right\}=\mathbb{P}\left\{L_T^\alpha(\zeta_T)>0\right\}-\varepsilon \geq {1\over 2} \mathbb{P}\left\{L_T^\alpha(\zeta_T)>0\right\}>0. 
\end{eqnarray}
Consider now a decreasing sequence $\varepsilon_n\to 0$. By construction above, there is an increasing sequence $N_n\to \infty$ such that 
$\mathbb{P}\left\{\forall_t \ N_n\xi_t\in -G_t\right\}\geq 1-T\varepsilon_n$, $\mathbb{P}\left\{L_T(N_n\zeta_T)\geq 0\right\}\geq 1-\varepsilon_n$ and $\mathbb{P}\left\{L_T(N_n\zeta_T) > 0\right\}\geq {1\over 2} \mathbb{P}\left\{L_T^\alpha(\zeta_T)>0\right\}>0$. Therefore we have a weak asymptotic arbitrage on $(G_t)$ market.  
\end{proof}
In what follows we shall call by $\bar{K}$ market the market with solvent set $\bar{K}_t$ at time $t$ and  by $K$ 
market the market with 
solvent set $K_t$ at time $t$, where $K_t$ is defined in \eqref{K set}. We have
\begin{Proposition}
Assuming (B1) asymptotic arbitrage on $\bar{K}$ market implies  asymptotic arbitrage on $K$ market. 
\end{Proposition}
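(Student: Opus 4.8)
The plan is to convert a given asymptotic arbitrage on the $\bar K$ market, a sequence $\zeta_T^n=\sum_{t=0}^T\xi_t^n$ with $\xi_t^n\in-\bar K_t$, $\liminf_n\mathbb P\{\bar L_T(\zeta_T^n)\ge0\}=1$ and $\liminf_n\mathbb P\{\bar L_T(\zeta_T^n)>0\}=:2c>0$, into one whose increments lie in $-K_t$ exactly. The starting observation is that, since $\bar K_t=\overline{K_t}$ and $K_t$ is convex, the interior of $\bar K_t$, which is $\{\bar L_t>0\}$, is contained in $K_t$; hence any increment with $\bar L_t(-\xi_t^n)>0$ already satisfies $\xi_t^n\in-K_t$, and only the increments sitting on the trading rays $\partial\bar K_t=\{\bar L_t=0\}$ must be moved. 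For those I would use the maps $Q_t^\epsilon$, which send $\mathbb R^2$ into $K_t$ and satisfy $Q_t^\epsilon(x,y)\to(x,y)$ on $\bar K_t$: setting $\tilde\xi_t^n=-Q_t^\epsilon(-\xi_t^n)$ gives $\tilde\xi_t^n\in-K_t$ and $\tilde\xi_t^n\to\xi_t^n$ as $\epsilon\downarrow0$.

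First I would control the liquidation loss. As $Q_t^\epsilon$ only raises the asset coordinate, the perturbed terminal position is $\tilde\zeta_T^n=(\zeta_T^{n,1},\zeta_T^{n,2}-\Delta_n)$ with $\Delta_n\ge0$, so (B1) gives $0\le\bar L_T(\zeta_T^n)-\bar L_T(\tilde\zeta_T^n)\le S\,\Delta_n$. Since the push of a ray point is proportional to its size, I would take $\epsilon=\epsilon_t^n$ to be $\mathcal F_t$‑measurable and small relative to $|\xi_t^n|$; then for a suitable choice each push, and with it $\Delta_n$ and the loss $D_n:=\bar L_T(\zeta_T^n)-\bar L_T(\tilde\zeta_T^n)$, is smaller than any prescribed $\rho_n$ outside a set of vanishing probability. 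Choosing $\rho_n\downarrow0$ diagonally preserves strict positivity, since on $\{\bar L_T(\zeta_T^n)\ge\rho_n\}\cap\{D_n<\rho_n\}$ one has $\bar L_T(\tilde\zeta_T^n)>0$, whence $\liminf_n\mathbb P\{\bar L_T(\tilde\zeta_T^n)>0\}\ge c>0$.

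The main obstacle is the non‑negativity requirement $\liminf_n\mathbb P\{\bar L_T(\tilde\zeta_T^n)\ge0\}=1$. Because the inward perturbation strictly lowers $\bar L_T$ whenever an increment lay on a trading ray, the break‑even set $\{\bar L_T(\zeta_T^n)=0,\ \zeta_T^n\ne0\}$ is pushed below $0$, and the estimate
\[
\mathbb P\{\bar L_T(\tilde\zeta_T^n)\ge0\}\ge1-\mathbb P\{\bar L_T(\zeta_T^n)<0\}-\mathbb P\{0\le\bar L_T(\zeta_T^n)<\rho_n\}-\mathbb P\{D_n\ge\rho_n\}
\]
reduces the whole matter to showing $\mathbb P\{0\le\bar L_T(\zeta_T^n)<\rho_n\}\to0$, i.e.\ that the terminal break‑even atom is asymptotically negligible. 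I would attack this as in Proposition \ref{prop3}, splitting $\Omega$ by the terminal position: on $\{\bar L_T(\zeta_T^n)>0\}$, where the terminal position already lies in $K_T$, nothing is lost, the set $\{\zeta_T^n=0\}$ is harmless, and the remaining boundary set is cut into $\{|\zeta_T^{n,2}|\ge d\}$ and $\{0<|\zeta_T^{n,2}|<d\}$, the latter absorbed into the bad set as $d\downarrow0$.

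The genuinely hard point, where I expect most of the work, is the large‑position boundary part: an increment already on a trading ray carries no slack, so moving the terminal position into $\{\bar L_T>0\}\subset K_T$ costs a strictly positive amount, and it is not automatic that this cost can be recovered on a set where the original value is exactly zero. My intended remedy is a preprocessing of the $\bar K$ sequence before perturbing — rescaling increments by $\mathcal F_t$‑measurable factors, which is legitimate because $\bar K_t$ is a cone — chosen so as to drive $\mathbb P\{\bar L_T(\zeta_T^n)=0,\ \zeta_T^n\ne0\}\to0$; once the break‑even atom is negligible, the decomposition and the diagonal argument above yield $\liminf_n\mathbb P\{\bar L_T(\tilde\zeta_T^n)\ge0\}=1$, and together with $\tilde\xi_t^n\in-K_t$ this is the asymptotic arbitrage on the $K$ market.
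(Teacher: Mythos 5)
Your core mechanism is the same as the paper's: only increments on the trading rays of $\bar{K}_t$ (those with $\xi_t^{1,n}\xi_t^{2,n}<0$) need to be moved, a vanishing inward perturbation of the asset coordinate does it, and (B1) bounds the resulting loss of liquidation value so that strict positivity survives a diagonal extraction. The paper implements this more simply than your $Q_t^\epsilon$-based version: it shifts the asset coordinate of every such increment by the deterministic amount $\frac{1}{n}$, so that $\tilde{\xi}_i^n\in -K_i$ and the terminal loss is bounded pointwise by $\frac{1}{n}(T+1)\|\overline{S}_T(\infty)\|_\infty$, with no need for your $\mathcal{F}_t$-measurable $\epsilon_t^n$, proportional-push estimates, or exceptional sets of vanishing probability; everything up to the break-even issue is sound in both versions.

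The genuine gap is your final ``preprocessing'' step, which you correctly identify as the crux but do not carry out, and which cannot work as described. Each $\bar{K}_t$ is a cone and $\bar{L}_T$ is positively homogeneous, so rescaling all increments by a common $\mathcal{F}_t$-measurable (in particular deterministic) factor leaves the event $\left\{\bar{L}_T(\zeta_T^n)=0,\ \zeta_T^n\neq 0\right\}$ exactly invariant; rescaling different increments by different factors changes $\zeta_T^n$ itself, so it neither preserves the hypotheses $\liminf_n\mathbb{P}\left\{\bar{L}_T(\zeta_T^n)\geq 0\right\}=1$, $\liminf_n\mathbb{P}\left\{\bar{L}_T(\zeta_T^n)>0\right\}>0$, nor offers any mechanism for shrinking the break-even atom. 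Note also that your claim that $\left\{\zeta_T^n=0\right\}$ is harmless fails: offsetting ray trades (buy and later sell at the cone prices) produce $\zeta_T^n=0$ with strictly positive pushes, hence $\bar{L}_T(\tilde{\zeta}_T^n)<0$ there, so part of this set belongs to the same problematic region; and you cannot switch the time-$t$ perturbations off on the break-even set, since that set is only $\mathcal{F}_T$-measurable. For what it is worth, the paper's own proof does not resolve this point either: from $|\bar{L}_T(\tilde{\zeta}_T^n)-\bar{L}_T(\zeta_T^n)|\leq \frac{1}{n}(T+1)\|\overline{S}_T(\infty)\|_\infty$ it directly concludes $\liminf_n\mathbb{P}\left\{\bar{L}_T(\tilde{\zeta}_T^n)\geq 0\right\}=1$, although the bound by itself only yields $\bar{L}_T(\tilde{\zeta}_T^n)\geq -\frac{C}{n}$ with probability tending to one. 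So you have correctly isolated exactly the step that the paper asserts rather than proves, but your proposal does not close it, and as written the remedy you sketch is not a viable route.
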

 \begin{proof}
Assume we have a sequence $\zeta_T^n \in R_T^0(\bar{K}):=\sum_{i=0}^T L^0(-\bar{K}_i,{\cal{F}}_i)$
such that  $\lim_{n\to \infty}\mathbb{P}\left\{\bar{L}_T(\zeta_T^n)\geq 0\right\}=1$ and $\liminf_{n\to \infty} \mathbb{P}\left\{\bar{L}_T(\zeta_T^n) > 0\right\}>0$ and 
$\zeta_T^n=\sum_{i=0}^T \xi_i^n$, where $\xi_i^n=(\xi_i^{1,n},\xi_i^{2,n})$ is ${\cal F}_i$ measurable and takes values in $-\bar{K}_i$. 
Let $\tilde{\zeta}_T^n=\sum_{i=0}^T \tilde{\xi}_i^n$, where $\tilde{\xi}_i^n=({\xi}_i^{1,n},{\xi}_i^{2,n}-{1\over n})$ whenever ${\xi}_i^{1,n} {\xi}_i^{2,n}<0$ and $\tilde{\xi}_i^n=({\xi}_i^{1,n},{\xi}_i^{2,n})$ otherwise. Then $\tilde{\xi}_i^n\in 
-{K}_i$ and $|\bar{L}_T(\tilde{\zeta}_T^n)-\bar{L}_T(\zeta_T^n)|\leq {1\over n} (T+1)\|\overline{S}_T(\infty)\|_\infty$. 
Consequently 
$\liminf_{n\to \infty}\mathbb{P}\left\{\bar{L}_T(\tilde{\zeta}_T^n)\geq 0\right\}=1$ and  $\liminf_{n\to 
\infty}\mathbb{P}\left\{\bar{L}_T(\tilde{\zeta}_T^n) > 0\right\}>0$ from which an asymptotic arbitrage on $K$ market follows. 
  \end{proof}
We can now formulate our main result
\begin{Theorem}\label{thmm}
Under (B1) and (B2) asymptotic arbitrage on $\bar{K}$ market implies weak asymptotic arbitrage on $(G_t)$ market. 
\end{Theorem}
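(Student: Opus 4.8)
The plan is to transfer an asymptotic arbitrage on the cone market $\bar K$ to the concave market $(G_t)$ by \emph{scaling up the traded volumes}, so that the concave prices $\overline S_t(l),\underline S_t(m)$ approach the limits $\overline S_t(\infty),\underline S_t(\infty)$ at which $\bar K$ trades. First I would record the two tools that drive the estimate. By the preceding Proposition, (B1) lets me replace the given $\bar K$-asymptotic arbitrage by one on $K$, i.e.\ I obtain $\zeta_T^n=\sum_t\xi_t^n$ with $\xi_t^n\in -K_t$, pushing the increments off the two asymptotic rays of $\partial\bar K_t$ and giving a strictly positive margin $\bar L_t(-\xi_t^n)>0$ for the non-trivial trades. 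By Lemma~\ref{lem15}, (B1)+(B2) imply $(L_0)$, so $\sup_{|y|\le M}\big\|\delta_t(\lambda(0,y))/\lambda\big\|_\infty\to0$ as $\lambda\to\infty$ for every fixed $M$; and by \eqref{impineq}, $\delta_t$ depends only on the asset coordinate, which is what makes the $(L_0)$ bound usable after scaling.

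Next I would normalise exactly as in the proof of Theorem~\ref{thm:3.2}: replacing the last increment by $\xi_T^n-\zeta_T^n+\bar L_T(\zeta_T^n)e_1$, a legitimate element of $-\bar K_T$ since $\bar K_T$ is a convex cone, I may assume $\zeta_T^n=\bar L_T(\zeta_T^n)e_1$, so the terminal position is pure cash, strictly positive on the profit set and \emph{exactly zero} on the no-profit set $\{\bar L_T(\zeta_T^n)=0\}$. I would then localise: choose $M_n\uparrow\infty$ with $\mathbb{P}\{\forall t\ |\xi_t^n|\le M_n\}\to1$ and $\epsilon_n\downarrow0$ with $\mathbb{P}\{\bar L_T(\zeta_T^n)\ge\epsilon_n\}\ge\tfrac12\liminf_m\mathbb{P}\{\bar L_T(\zeta_T^m)>0\}>0$ for large $n$.

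On this localized set I would scale by a large deterministic $\lambda_n$ and carry the volumes into $G_t$. For a bounded increment with margin, \eqref{impineq} gives $L_t(\lambda_n(-\xi_t^n))=\lambda_n\bar L_t(-\xi_t^n)-\delta_t(\lambda_n(-\xi_t^n))\ge\lambda_n\bar L_t(-\xi_t^n)-\sup_{|y|\le M_n}\|\delta_t(\lambda_n(0,y))\|_\infty$, which by $(L_0)$ is positive once $\lambda_n$ is large relative to $M_n$; thus $\lambda_n(-\xi_t^n)\in G_t^0$, the role played at $\alpha=0$ by Lemma~\ref{Lem4} in Proposition~\ref{prop3}. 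The same estimate bounds the terminal liquidation of $\lambda_n\zeta_T^n$ on the profit part below by $\lambda_n\epsilon_n-\sum_t\sup_{|y|\le M_n}\|\delta_t(\lambda_n(0,y))\|_\infty>0$. Feasibility $\lambda_n\xi_t^n\in-\bar K_t$ is automatic because $\bar K_t$ is a cone, and on the localized profit set one obtains the profit-set contributions to the five defining relations of weak asymptotic arbitrage for $\lambda_n\zeta_T^n$.

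\textbf{The main obstacle} is the no-profit (break–even) set. There the normalised terminal position is exactly $0$, harmless for $L_T=0$, but every break–even trade sits on $\partial\bar K_t$ with \emph{no} price margin, so its scaled version $\lambda_n(-\xi_t^n)$ misses $G_t$ by $\delta_t(\lambda_n(-\xi_t^n))>0$; repairing this by the cash–correction $\lambda_n\xi_t^n+L_t(-\lambda_n\xi_t^n)e_1$ of Theorem~\ref{thm:3.2} restores $G_t$-feasibility but turns the terminal value on that set into $-\sum_t\delta_t(\lambda_n\xi_t^n)<0$, destroying $\liminf\mathbb{P}\{L_T\ge0\}=1$. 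Hence feasibility and terminal solvency pull in opposite directions precisely on the break–even set. The crux is therefore to arrange the cone-market arbitrage so that \emph{no trading occurs off the profit event}: confining each $\xi_t^n$ to an adapted event on which a strict profit is produced makes the no-profit set trade–free, hence trivially $G$-feasible with terminal $0$, after which the $(L_0)$ estimate of the third paragraph applies verbatim on the trading event. Establishing this localised form of the asymptotic arbitrage on $\bar K$ — the analogue for the cone market of the equivalence $(\widebar A)\equiv(\widebar{SA})$ behind Theorem~\ref{thm:3.3} — is the step I expect to demand the most care.
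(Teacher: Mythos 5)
You have correctly located the crux, but your proposed resolution goes in a direction that cannot work, and it is not the paper's. The quantitative step in your third paragraph already fails: for a self-financing strategy on the cone market the increments generically lie \emph{on} $\partial\bar{K}_t$ (one exchanges cash for assets exactly at the limit prices $\underline{S}_t(\infty)$, $\overline{S}_t(\infty)$), so $\bar{L}_t(-\xi_t^n)=0$ for the typical non-trivial trade \emph{even on the profit event} --- the profit shows up only in the terminal aggregate, not in the increments. Passing to $K_t=\conv G_t$ via the preceding Proposition does not repair this: membership in $K_t$ carries no quantitative lower bound on $\bar{L}_t(-\xi_t^n)$ (and in the fixed-plus-proportional sub-case no positive margin at all), so the comparison $\lambda_n\bar{L}_t(-\xi_t^n)\geq\sup_{|y|\leq M_n}\|\delta_t(\lambda_n(0,y))\|_\infty$ has nothing to stand on; note that $\delta_t(0,\lambda y)\approx -b_0+\lambda|y|\beta(\lambda|y|)$ can diverge as $\lambda\to\infty$ under (B2), so zero-margin increments, once scaled, miss $G_t$ by an amount growing to infinity. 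Your suggested fix --- arranging the cone-market arbitrage so that no trading occurs off the profit event --- is exactly the step that cannot be extracted from an \emph{asymptotic} arbitrage: the profit event $\{\bar{L}_T(\zeta_T^n)>0\}$ is only ${\cal F}_T$-measurable, and the localisation behind Theorem~\ref{thm:3.3} requires a genuine arbitrage together with $(L_1)$, and even then confines trading to an ${\cal F}_{t^\star}$-event, not to the terminal profit set. So the ``main obstacle'' you flag is real, but it contaminates all trading dates, not just the break-even set, and your plan leaves it unresolved.

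The paper's proof avoids the obstacle by a different mechanism that your plan never invokes: it creates a \emph{multiplicative} price margin before scaling, replacing the asset legs by $\tilde{\xi}_i^{2,n}=\xi_i^{2,n}/(1\pm\tfrac{1}{n})$ so that the modified increments lie in $-K_t^{1/n}$, i.e.\ they trade at the worse prices $(1-\tfrac1n)\underline{S}_t(\infty)$, $(1+\tfrac1n)\overline{S}_t(\infty)$ of the market $M^{1/n}$. This margin grows linearly in the traded volume, which is precisely what Lemma~\ref{Lem4} (with $\alpha=\tfrac1n$, using (B2)'s bound $\beta(z)\leq\varepsilon<\alpha S$ for $z\geq M(\varepsilon)$) needs in order to conclude that, above a \emph{deterministic} volume threshold, scaled positions are $G_t$-solvent --- your ``$\alpha=0$'' reading of Lemma~\ref{Lem4} in Proposition~\ref{prop3} is empty, since the lemma has no content at $\alpha=0$. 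The second ingredient you do not exploit is the slack built into \emph{weak} asymptotic arbitrage: feasibility is only required with probability tending to one, so the paper handles small or vanishing asset legs by tightness (events $\{|\tilde{\xi}_t^{2,n}|\geq d_t^n\ \text{or}\ \tilde{\xi}_t^{2,n}=0\}$ of probability $\geq 1-\varepsilon_n$), applies deterministic scalings $m_{t,n}$ and $\bar{m}(n)$, and simply discards the exceptional events of probability $O(\varepsilon_n)$; no cash-correction \`a la Theorem~\ref{thm:3.2}, and hence no negative terminal value on break-even paths, ever enters. Your insistence on almost-sure $G$-feasibility forces you into the dead end of the fourth paragraph, whereas the statement being proved deliberately does not demand it.
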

\begin{proof}
Assume that $\zeta_T^n=\sum_{i=0}^T \xi_i^n$, where $\xi_i=(\xi_i^{1,n},\xi_i^{2,n})\in  L^0(-\bar{K}_i,{\cal{F}}_i)$ forms 
an asymptotic arbitrage on $\bar{K}$ market i.e. $\liminf_{n\to \infty}\mathbb{P}\left\{\bar{L}_T(\zeta_T^n)\geq 0\right\}=1$ and $\liminf_{n\to \infty}\mathbb{P}\left\{\bar{L}_T(\zeta_T^n) > 0\right\}>0$. 
Define $\tilde{\xi}_{i}^{2,n}={\xi_i^{2,n} \over 1 - {1\over n}}$ for 
$\xi_i^{2,n}<0$ and $\xi_i^{1,n}\xi_i^{2,n}<0$, and $\tilde{\xi}_{i}^{2,n}={\xi_i^{2,n} \over 1 + {1\over n}}$ for $\xi_i^{2,n}>0$ and 
$\xi_i^{1,n}\xi_i^{2,n}<0$, and $\tilde{\xi}_{i}^{2,n}=\xi_{i}^{2,n}$ in the other cases. Let $\tilde{\xi}_i^{1,n}=\xi_i^{1,n}$ and $\tilde{\xi}_i^n=(\tilde{\xi}_i^{1,n},\tilde{\xi}_i^{2,n})$. 
Therefore $\tilde{\zeta}_T^n:=\sum_{i=0}^T \tilde{\xi}_i^n=\zeta_T^n - \sum_{i=0}^T 
(\xi_i^{1,n},\eta(n,\xi_i^{2,n})\xi_i^{2,n})$, where $\eta(n,\xi_i^{2,n})={-{1\over n}\over 1-{1\over n}}\xi_i^{2,n}$ for 
$\xi_i^{2,n}<0$ and $\xi_i^{1,n}\xi_i^{2,n}<0$, 
$\eta(n,\xi_i^{2,n})={{1\over n}\over 1+{1\over n}}\xi_i^{2,n}$ for  $\xi_i^{2,n}>0$ and $\xi_i^{1,n}\xi_i^{2,n}<0$ and 
$\eta(n,\xi_i^{2,n})=0$ in the other cases. Clearly $|\eta(n,\xi_i^{2,n})|\leq {{1\over n} \over 1-{1 \over n}}$. 
Furthermore $\tilde{\zeta}_T^n \in R_T^0(K^{1\over n})=\sum_{t=0}^T L^0(-{K}_t^{1\over n},{\cal{F}}_t)$ and under (B1) and 
(B2) $\liminf_{n \to \infty}\mathbb{P}\left\{\bar{L}_T^{1\over n}(\tilde{\zeta}_T^n)\geq 0\right\}=1$ and $\liminf_{n 
\to \infty}\mathbb{P}\left\{\bar{L}_T^{1\over n}(\zeta_T^{n}) > 0\right\}>0$. For given $n$ and $0<\varepsilon_n \to 0$ and each 
$t=0,1,\ldots, T$ there is a deterministic $d_t^n$ such that $\mathbb{P}\left\{|\tilde{\xi}_t^{2,n}|\geq d_t^n \ 
or \  \tilde{\xi}_t^{2,n}=0\right\}\geq 1-\varepsilon_n$ and therefore $\liminf_{n \to \infty}\mathbb{P}\left\{|\tilde{\xi}_{t}^{2,n}|\geq d_t^n \ or \ \tilde{\xi}_{t}^{2,n}=0\right\}\geq 1-\varepsilon_n$. As in the 
proof of Proposition \ref{prop3} for each $t$ there is a deterministic $m_{t,n}$ such that 
$m_{t,n}\tilde{\xi}_t^n \in -G_t^0$ whenever $|\tilde{\xi}_{t}^{2,n}|\geq d_t^n$. Consequently for 
$m(n)=m_{0,n}+\ldots m_{T,n}$ we have $ \mathbb{P}\left\{\forall_t \ m(n)\tilde{\xi}_t^n \in -G_t\right\}\geq 1-T\varepsilon_n$. Moreover there is deterministic 
$d_n>0$ such that $\mathbb{P}\left\{m(n)|\tilde{\zeta}_T^{2,n}|\geq d_n \ or 
\ \tilde{\zeta}_T^{2,n}=0 \right\}\geq 1-\varepsilon_n$, where $\tilde{\zeta}_T^n=:(\tilde{\zeta}_T^{1,n}, 
\tilde{\zeta}_T^{2,n})$. 
By Lemma \ref{Lem4} there is a deterministic $\bar{m}(n)$ such that  
$ \mathbb{P}\left\{\bar{m}(n)m(n)\tilde{\zeta}_T^n\in G_T \right\}\geq 
1-\varepsilon_n$ and $ \mathbb{P}\left\{L_T(\bar{m}(n)m(n)\tilde{\zeta}_T^n)>0 
\right\}>0$. Therefore following the arguments of the proof of Proposition \ref{prop3} we obtain that there is an increasing sequence $N_n\to \infty$ such that $ \mathbb{P}\left\{N_n\tilde{\zeta}_T^n \in R_T^0\right\}\geq 1-T\varepsilon_n$, $ \mathbb{P}\left\{L_T(N_n\tilde{\zeta}_T^n)\geq 0 \right\}\geq 1-\varepsilon_n$ and 
\begin{equation}
\mathbb{P}\left\{L_T(N_n\tilde{\zeta}_T^n)>0 \right\}\geq {1\over 2}\mathbb{P}\left\{L_T(N_n\tilde{\zeta}_T^n)>0 
\right\}.
\end{equation}
Since 
\begin{equation}
\liminf_{n\to \infty}\mathbb{P}\left\{L_T(N_n\tilde{\zeta}_T^n)>0\right\}= \liminf_{n\to \infty}\mathbb{P}\left\{\bar{L}_T^{1\over n}(N_n\tilde{\zeta}_T^n)>0\right\}>0
\end{equation}
we obtain an asymptotic weak arbitrage on $(G_t)$ market. 
\end{proof}

\section{Markets with non infinitely divisible assets}

In this section we consider markets with bid and ask prices $(\underline{S}_t(m))$ and $(\overline{S}_t(l))$ 
assuming that $l$, and $m$ can take nonnegative integer values only. We assume that they satisfy (a1)-(a5) and 
convexity, concavity are understood as increasing or decreasing slopes of the lines between positive integer 
points of the graph. The set of solvent positions $G_t^N$ at time $t$ is given by 
\begin{equation}
G_t^N:=\left\{(x,y)\in \mathbb{R}\times \mathbb{Z}: L_t(x,y)\geq 0\right\},
\end{equation} 
where $Z$ denotes integer numbers. Clearly $G_t^N=G_t\cap (\mathbb{R}\times \mathbb{Z})$.
We call such market $(G_t^N)$ market. Consequently we can 
define by analogy to \eqref{r01} and \eqref{r02}
\begin{equation}
R^0_{T,N}=\sum_{t=0}^T L^0(-G_{t}^N,{\cal F}_{t})
\end{equation}
and
\begin{equation}\label{rn02}
LV^0_{T,N}=\left\{L_T(V_T): V_T\in R^0_{T,N}\right\}.
\end{equation}
Then {\it absence of  arbitrage condition} ($NA^N$) on our market with non divisible assets is when $LV^0_{T,N} \cap 
L^0(\mathbb{R}_+,{\cal F}_T)=\left\{0\right\}$. Similarly we can define $K_t^N:=K_t \cap (\mathbb{R}\times 
\mathbb{Z})$ and $\bar{K}_t^N:=\bar{K}_t \cap (\mathbb{R}\times \mathbb{Z})$. Clearly $\bar{K}_t^N$ is the set 
of solvent positions on the market with proportional transaction costs with bid price $\underline{S}_t(\infty)$ 
and ask price $\overline{S}_t(\infty)$ when we sell or buy nonnegative integer number of assets. We call this market $(\bar{K}_t^N)$ market.
Let 
\begin{equation}
\bar{R}^0_{T,N}=\sum_{t=0}^T L^0(-\bar{K}_{t}^N,{\cal F}_{t})
\end{equation}
and
\begin{equation}\label{rnk02}
\widebar{LV}^0_{T,N}=\left\{L_T(V_T): V_T\in \bar{R}^0_{T,N}\right\}.
\end{equation}

We can also define absence of arbitrage version ($\widebar{NA}^N$) on $(\bar{K}_t^N)$ market as $\widebar{LV}_T^{0,N}\cap L^0(\mathbb{R}_+)=\left\{0\right\}$. Similarly let 
\begin{equation}
\tilde{R}^0_{T,N}=\sum_{t=0}^T L^0(-{K}_{t}^N,{\cal F}_{t})
\end{equation}
and for $(K_t^N)$ corresponding to solvent sets on $(K_t)$ market with non divisible assets we define
\begin{equation}\label{rnt02}
\widetilde{LV}^0_{T,N}=\left\{L_T(V_T): V_T\in \tilde{R}^0_{T,N}\right\}
\end{equation}
and we have absence of  arbitrage ($\widetilde{NA}^N$) when $\widetilde{LV}_T^{0,N}\cap 
L^0(\mathbb{R}_+)=\left\{0\right\}$.
The following version of Lemma \ref{equival} is satisfied 
\begin{Lemma}\label{equivalN}  We have
\begin{enumerate}
\item[(a)] under assumption $a_0=0$ the property ($NA^N$) is equivalent to $R^0_{T,N} \cap L^0((G_T^N)^0\cup 
    \left\{0\right\},{\cal F}_T)=\left\{0\right\}$; moreover if $a_0\neq 0$ we have only the implication: from $(NA^N$) it 
    follows that  $R^0_{T,N} \cap L^0((G_T^N)^0\cup
    \left\{0\right\},{\cal F}_T)=\left\{0\right\}$,
\item[(b)] $(\widebar{NA}^N)$ is equivalent to $\bar{R}^0_{T,N} \cap L^0(K_T^N,{\cal F}_T)=\left\{0\right\}$,
\item[(c)] $(\widetilde{NA}^N)$ is equivalent to $\tilde{R}^0_{T,N} \cap L^0(K_T^N,{\cal 
    F}_T)=\left\{0\right\}$.
\end{enumerate}
\end{Lemma}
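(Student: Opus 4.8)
The plan is to transcribe the proof of Lemma~\ref{equival} onto the lattice $\mathbb{R}\times\mathbb{Z}$, observing that every solvent set occurring there has an integer counterpart obtained by intersecting with $\mathbb{R}\times\mathbb{Z}$ and that the liquidation functions $L_T$ and $\bar{L}_T$ are literally the same. First I would record the structural facts the original proof used, now in lattice form. From \eqref{gcont} together with the fact that a sum of integers is an integer we obtain $G_t^N+G_t^N\subset G_t^N$. Since $\bar{K}_t$ is a convex cone we get $\bar{K}_t^N+\bar{K}_t^N\subset\bar{K}_t^N$, and since $2K_t=\conv(2G_t)\subset\conv G_t=K_t$ by Corollary~\ref{cor1.8} one checks $K_t+K_t\subset K_t$ and hence $K_t^N+K_t^N\subset K_t^N$. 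Because the liquidation functions are unchanged, $(G_T^N)^0=\{L_T>0\}\cap(\mathbb{R}\times\mathbb{Z})$, $\partial G_T^N=\{L_T=0\}\cap(\mathbb{R}\times\mathbb{Z})$, and likewise $\bar{L}_T=0$ on $\partial K_T^N$.

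For part~(a) the forward implication needs no restriction on $a_0$: if $\xi\in R^0_{T,N}\cap L^0((G_T^N)^0\cup\{0\},\mathcal{F}_T)$ then $L_T(\xi)\geq 0$, so $(NA^N)$ forces $L_T(\xi)=0$, and membership in $(G_T^N)^0\cup\{0\}=\{L_T>0\}\cup\{0\}$ then forces $\xi=0$. For the converse (under $a_0=0$) I would reuse the boundary--correction device. Taking $\xi=\sum_{i=0}^T\xi_i\in R^0_{T,N}$ with $L_T(\xi)\geq 0$, so that $\xi\in G_T^N$, I set $B_T=\{\xi\in\partial G_T^N\}$ and
\[
\bar{\xi}_T=(\xi_T-\xi)\mathbbm{1}_{B_T}+\xi_T\mathbbm{1}_{B_T^c}.
\]
The one point to verify is $\bar{\xi}_T\in -G_T^N$: on $B_T^c$ it is clear, while on $B_T$ we have $-\bar{\xi}_T=\xi+(-\xi_T)\in G_T^N+G_T^N\subset G_T^N$, the integrality of the second coordinate being automatic since $\xi$ and $\xi_T$ both have integer second coordinate. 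Then $\tilde{\xi}:=\sum_{i=0}^{T-1}\xi_i+\bar{\xi}_T\in R^0_{T,N}$ equals $0$ on $B_T$ and $\xi$ on $B_T^c$, hence takes values in $(G_T^N)^0\cup\{0\}$; the hypothesis gives $\tilde{\xi}=0$, so $\xi=0$ on $B_T^c$, while $L_T(\xi)=0$ on $B_T$, which is exactly where $a_0=0$ and the integer analog of Corollary~\ref{zeronpart} enter. Thus $L_T(\xi)=0$ and $(NA^N)$ holds.

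Parts~(b) and~(c) run through the same scheme for $\bar{K}_t^N$ and $K_t^N$ with the cone liquidation $\bar{L}_T$ in place of $L_T$: the additivity inclusions above replace \eqref{gcont}, and the identity $\bar{L}_T=0$ on $\partial K_T^N$ replaces Corollary~\ref{zeronpart}. Since that identity holds with no condition on $a_0$, both equivalences are unconditional. Concretely, for~(b) I take $\xi\in\bar{R}^0_{T,N}$ with $\bar{L}_T(\xi)\geq 0$, correct the last increment on $\{\xi\in\partial K_T^N\}$ using $\bar{K}_T^N+\bar{K}_T^N\subset\bar{K}_T^N$, and invoke the hypothesis $\bar{R}^0_{T,N}\cap L^0(K_T^N,\mathcal{F}_T)=\{0\}$; part~(c) is identical with $K_t^N$ and $\tilde{R}^0_{T,N}$.

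The proof is therefore routine bookkeeping once the lattice structural facts are established, and I expect the only genuinely new work to be the verification of those three additivity inclusions. Of these $K_t^N+K_t^N\subset K_t^N$ is the least transparent, since it passes through $2K_t\subset K_t$ (i.e. Corollary~\ref{cor1.8}); after that, intersecting with $\mathbb{R}\times\mathbb{Z}$ costs nothing because every operation used, in particular the correction $\xi_T-\xi$, preserves integrality of the second coordinate. The main obstacle is thus not conceptual but purely one of confirming that the measurable correction $\bar{\xi}_T$ remains lattice--valued and solvent, which the construction guarantees.
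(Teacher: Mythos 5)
Your proposal is correct and follows essentially the same route as the paper, which states Lemma~\ref{equivalN} without a separate proof as ``the following version of Lemma~\ref{equival}'': you transcribe that proof's boundary-correction construction $\bar{\xi}_T=(\xi_T-\xi)\mathbbm{1}_{B_T}+\xi_T\mathbbm{1}_{B_T^c}$ verbatim, with the role of $a_0=0$ and of $\bar{L}_T=0$ on $\partial K_T$ placed exactly where the paper places them. The only additions — checking $G_t^N+G_t^N\subset G_t^N$, $\bar{K}_t^N+\bar{K}_t^N\subset\bar{K}_t^N$ and $K_t^N+K_t^N\subset K_t^N$ (via $K_t+K_t=2K_t\subset K_t$ from Corollary~\ref{cor1.8}) and that the correction preserves integrality of the second coordinate — are precisely the lattice details the paper leaves implicit, and your verifications of them are sound.
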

Let now $M^{\alpha,N}$ be a market with proportional transaction costs with bid prices 
$(\underline{S}_t=(1-\alpha)\underline{S}_t(\infty))$ 
and ask prices $(\overline{S}_t=(1+\alpha)\overline{S}_t(\infty))$, where $0<\alpha \leq \min\left\{ 
{\overline{S}_t(0) \over \overline{S}_t(\infty)}-1, 1-{\underline{S}_t(0) \over 
\underline{S}_t(\infty)}\right\}$, $\mathbb{P}$ a.s. such that we can buy or sell only positive integer number 
of assets. 
We have
\begin{Proposition}
Under (B1) and (B2) arbitrage on the market $M^{\alpha,N}$ implies arbitrage on $(G_t^N)$ market (with non infinitely divisible assets). 
\end{Proposition}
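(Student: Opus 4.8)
The plan is to follow the proof of Proposition \ref{prop3}, but to exploit the integrality of the admissible positions, which makes the probabilistic truncation used there unnecessary and upgrades the conclusion from weak asymptotic arbitrage to a genuine arbitrage. First I would unfold the hypothesis: an arbitrage on $M^{\alpha,N}$ supplies $\zeta_T=\sum_{i=0}^T\xi_i$ with $\xi_i=(\xi_i^1,\xi_i^2)$ taking values in $-\bar{K}_i^{\alpha,N}$, so that each $\xi_i^2$ is $\mathbb{Z}$-valued and $-\xi_i\in\bar{K}_i^\alpha$, and moreover $L_T^\alpha(\zeta_T)\ge 0$, $\mathbb{P}$ a.s., with $\mathbb{P}\{L_T^\alpha(\zeta_T)>0\}>0$.

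Next I fix $\varepsilon$ with $0<\varepsilon<\alpha S$ (for instance $\varepsilon=\alpha S/2$) and invoke Lemma \ref{Lem4}. By (B1) and (B2) the quantity $M(\varepsilon)$, together with the constants $a_0,-b_0,S,\alpha$, is deterministic and independent of $t$, so the lemma furnishes one deterministic threshold $y^\ast:=\max\{M(\varepsilon),\,a_0/(\alpha S-\varepsilon),\,-b_0/(\alpha S)\}$ valid simultaneously for all $t=0,\dots,T$: whenever $(x,y)\in\bar{K}_t^\alpha$ and $|y|\ge y^\ast$ one has $(x,y)\in G_t$. I then choose a positive integer $n\ge\max\{y^\ast,1\}$.

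The key step is to show $n\xi_t\in -G_t^N$ for every $t$, $\mathbb{P}$ a.s. Since $-\xi_t\in\bar{K}_t^\alpha$ and this set is a cone, $-n\xi_t\in\bar{K}_t^\alpha$. If $\xi_t^2=0$, then $-\xi_t=(-\xi_t^1,0)\in\bar{K}_t^\alpha$ forces $-\xi_t^1\ge 0$, hence $-n\xi_t=(-n\xi_t^1,0)\in\mathbb{R}_+^2\subset G_t$. If $\xi_t^2\ne 0$, then integrality gives $|\xi_t^2|\ge 1$, so $|{-n\xi_t^2}|=n|\xi_t^2|\ge n\ge y^\ast$ and Lemma \ref{Lem4} yields $-n\xi_t\in G_t$. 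In either case $-n\xi_t\in G_t$ with integer second coordinate, i.e. $n\xi_t\in -G_t^N$, and therefore $n\zeta_T=\sum_{t=0}^T n\xi_t\in R^0_{T,N}$. This is exactly where the argument is cleaner than in Proposition \ref{prop3}: integrality replaces the event $\{|\xi_t^2|\ge d_t\ \text{or}\ \xi_t^2=0\}$ by the whole sample space, so the $T\varepsilon$ probability defect disappears and no scaling by a random factor is needed.

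Finally I would verify that the terminal position is solvent under the concave liquidation $L_T$. By positive homogeneity of $L_T^\alpha$ we have $L_T^\alpha(n\zeta_T)=n\,L_T^\alpha(\zeta_T)\ge 0$, so $n\zeta_T\in\bar{K}_T^\alpha$. If $|\zeta_T^2|\ge 1$, then $|n\zeta_T^2|\ge n\ge y^\ast$, and since by Lemma \ref{Lem4} the relevant $M^\alpha$-boundary line lies strictly above the boundary curve of $G_T$, we obtain $n\zeta_T\in G_T^0$, i.e. $L_T(n\zeta_T)>0$. If $\zeta_T^2=0$, then $\zeta_T^1\ge 0$ and $L_T(n\zeta_T)=L_T(n\zeta_T^1,0)=n\zeta_T^1+(b_0)^+=n\zeta_T^1\ge 0$, with strict inequality unless $\zeta_T=0$. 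Hence $L_T(n\zeta_T)\ge 0$, $\mathbb{P}$ a.s., and $\{\zeta_T\ne 0\}\subset\{L_T(n\zeta_T)>0\}$, so that $\mathbb{P}\{L_T(n\zeta_T)>0\}=\mathbb{P}\{\zeta_T\ne 0\}\ge\mathbb{P}\{L_T^\alpha(\zeta_T)>0\}>0$; thus $n\zeta_T$ is an arbitrage on the $(G_t^N)$ market. The only point requiring care, rather than a genuine obstacle, is the uniformity of $y^\ast$ in $t$ (guaranteed by (B1)--(B2) and the constancy of $a_0,b_0,\alpha$) together with the $y=0$ bookkeeping in $L_T$; the rest is the cone/scaling mechanism of Lemma \ref{Lem4} and Corollary \ref{cor1.8}, now free of any approximation.
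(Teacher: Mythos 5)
Your proof is correct and follows essentially the same route as the paper's: both rest on Lemma \ref{Lem4} together with the integrality constraint $|\xi_t^2|\geq 1$ or $\xi_t^2=0$, which allows scaling by a deterministic positive integer to push every trade into $-G_t^N$ and the terminal position into $(G_T^N)^0\cup\{0\}$. You merely streamline the paper's two-stage scaling (first $n=n_0+\cdots+n_T$, then $\bar{n}$) into a single uniform threshold $n\geq y^\ast$, and you spell out the $\xi_t^2=0$ and $\zeta_T^2=0$ cases that the paper leaves implicit.
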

\begin{proof}
Assume that there is $\zeta_T \in R^0_{T,N}(K^\alpha):=\sum_{t=0}^T L^0(-\bar{K}_t^{\alpha,N},{\cal{F}}_t)$, 
where $\bar{K}_t^{\alpha,N}:=\bar{K}_t^{\alpha}\cap (\mathbb{R}\times \mathbb{Z})$, such that 
$\mathbb{P}\left\{L_T^\alpha(\zeta_T)\geq 0\right\}=1$ and $\mathbb{P}\left\{L_T^\alpha(\zeta_T) > 0\right\}>0$ 
and 
$\zeta_T=\sum_{i=0}^T \xi_i$, where $\xi_i=(\xi_i^1,\xi_i^2)$ is ${\cal F}_i$ measurable and takes values in 
$-\bar{K}_i^{\alpha,N}$. By Lemma \ref{Lem4} for each  $t=0,1,\ldots,T$ there is a positive integer $n_t$ such 
that 
$n_t \xi_t\in -G_t^N$. Consequently for $n=n_0+n_1+\ldots + n_T$ we 
have $n \xi_t \in -G_t^N$. Therefore for $n\zeta_T$ we have an arbitrage on the market $M^{\alpha,N}$. By Lemma 
\ref{Lem4} there is a positive integer $\bar{n}$ such that  $\bar{n}n\zeta_T$ is in 
$(G_T^N)^0\cup\left\{0\right\}$ and forms an arbitrage on $(G_t^N)$ market. 
\end{proof}
By analogy to Theorem \ref{thmm} we have the following result
\begin{Theorem}
Under (B1) and (B2) arbitrage on $(\bar{K}_t^N)$ market implies
 asymptotic 
arbitrage on $(G_t^N)$ market. 
\end{Theorem}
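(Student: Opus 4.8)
The natural plan is to transplant the perturb‑and‑rescale scheme of Theorem~\ref{thmm} to the lattice $\mathbb{R}\times\mathbb{Z}$. Write the given $(\bar{K}_t^N)$‑arbitrage as $\zeta_T=\sum_{i=0}^T\xi_i$, $\xi_i\in L^0(-\bar{K}_i^N,\mathcal{F}_i)$ with integer asset coordinates and $\bar{L}_T(\zeta_T)\ge 0$, $\mathbb{P}\{\bar{L}_T(\zeta_T)>0\}>0$. As in the proofs of Theorems~\ref{thm:3.2}--\ref{thm:3.3} I would first reduce to $\zeta_T=\bar{L}_T(\zeta_T)e_1$, absorbing the remainder (which has $\bar{L}_T=0$, integer asset coordinate, hence lies in $\bar{K}_T^N$) into $\xi_T$; then $\{\bar{L}_T(\zeta_T)=0\}=\{\zeta_T=0\}$ and the terminal asset coordinate is $0$. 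To manufacture the margin required by Lemma~\ref{Lem4} I would, for $\alpha_n\downarrow 0$, shave only cash, $\tilde{\xi}_i^{(n)}=(\xi_i^1-|\xi_i^2|\alpha_n S,\xi_i^2)$; the bound $\underline{S}_i(\infty)\le\overline{S}_i(\infty)\le S$ from (B1) gives $\bar{L}_i^{\alpha_n}(-\tilde{\xi}_i^{(n)})=\bar{L}_i(-\xi_i)+\alpha_n|\xi_i^2|(S-\overline{S}_i(\infty)\wedge\underline{S}_i(\infty))\ge 0$, so $-\tilde{\xi}_i^{(n)}\in\bar{K}_i^{\alpha_n,N}$, and $\Xi:=\sum_i|\xi_i^2|$ being a fixed finite variable the correction $\alpha_n S\,\Xi\to 0$. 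For fixed $n$, Lemma~\ref{Lem4} yields a deterministic volume beyond which $\bar{K}_i^{\alpha_n}\subset G_i$; since $|\xi_i^2|\ge 1$ on $\{\xi_i^2\neq 0\}$, an integer $N_n$ past that threshold puts every nonzero‑asset increment into $-G_i^N$ (the zero‑asset ones are nonnegative cash), so the $(G^N)$‑feasibility holds with probability one. On $\{\bar{L}_T(\zeta_T)>0\}$ the terminal cash $N_n(\bar{L}_T(\zeta_T)-\alpha_n S\,\Xi)$ is positive for large $n$, giving $L_T>0$ and $\liminf_n\mathbb{P}\{L_T>0\}\ge\mathbb{P}\{\bar{L}_T(\zeta_T)>0\}>0$.

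The step that does \emph{not} go through is the solvency requirement $\liminf_n\mathbb{P}\{L_T(N_n\tilde{\zeta}_T^{(n)})\ge 0\}=1$ on the break‑even set $\{\zeta_T=0\}$. There the reduced terminal position is $0$ but its constituent trades need not vanish, and the cash shaved for margin---equivalently the strictly wider $(G^N)$‑spread encoded by the positive gaps $\delta_i$---turns a break‑even round trip into the loss $-N_n\alpha_n S\,\Xi<0$ on $\{\Xi>0\}$, a set of fixed positive probability. This defect is structural: making the terminal position margin‑solvent would mean it is profitable at the strictly worse prices $(1\pm\alpha_n)$, contradicting break‑even at the limit prices, so no feasible perturbation can cure it. Hence the break‑even trades must be removed \emph{before} rescaling, and this is the main obstacle.

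I would overcome it by replacing the arbitrage with a (random bounded) \emph{strong} arbitrage on $(\bar{K}_t^N)$, available because the efficient‑friction assumption (a5), $\underline{S}_t(\infty)<\overline{S}_t(\infty)$, makes every genuine trade strictly costly; a first‑active‑time localization in the spirit of Theorem~\ref{thm:3.3} then supplies a time $t$, a set $B_t\in\mathcal{F}_t$ with $\mathbb{P}(B_t)>0$, an $\mathcal{F}_t$‑measurable $m_t>0$, and a strategy with increments $\xi_r$ that vanishes off $B_t$ and satisfies $\bar{L}_T(V_T)\ge m_t$ on $B_t$. With no trades off $B_t$ there is no break‑even loss, and on $B_t$ (after the same pure‑cash reduction $V_T=\bar{L}_T(V_T)e_1$) I would run the cash‑correction of Theorem~\ref{thm:3.2}: set $\xi_r^{k}=k\xi_r+L_r(-k\xi_r)e_1$, which lies in $-G_r^N$ since $L_r(-\xi_r^k)=0$ and keeps integer assets. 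By Corollary~\ref{cor1.8} and $L_r(-k\xi_r)\ge-\delta_r(-k\xi_r)$ one gets $L_T(V_T^k)\ge k\bigl(m_t-\sum_{r\ge t}\delta_r(-k\xi_r)/k\bigr)$ on $B_t$. Bounding the finitely many increments by an $\mathcal{F}_t$‑measurable $\alpha_t$ and invoking $(L_0)$---valid under (B1)--(B2) by Lemma~\ref{lem15}---I choose a random $\mathcal{F}_t$‑measurable $k$ with $\sum_{r\ge t}\delta_r/k\le m_t/2$, whence $L_T(V_T^k)\ge k\,m_t/2>0$ on $B_t$. This is a random bounded strong arbitrage on $(G_t^N)$, a fortiori an arbitrage, and in particular the asserted asymptotic arbitrage.
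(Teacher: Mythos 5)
Your first paragraph reproduces, essentially verbatim, the route the paper itself takes: shave only the cash coordinate so the asset coordinate stays in $\mathbb{Z}$, observe that the shaved increments are feasible in the widened cone $\bar{K}_i^{\alpha_n,N}$, and use Lemma \ref{Lem4} together with the lattice bound $|\xi_i^2|\geq 1$ to get \emph{deterministic} integer rescalings with almost-sure $(G^N)$-feasibility (this is exactly why the paper obtains asymptotic rather than weak asymptotic arbitrage here, dispensing with the truncation events $\Omega_t$ of Theorem \ref{thmm}). Your objection about the break-even event $\{\bar{L}_T(\zeta_T)=0\}$ containing nontrivial round trips is a fair one: on that event the shaved strategy has strictly negative value at the $(1\pm\alpha_n)$-prices, rescaling only amplifies the deficit, and the paper's proof passes over this silently, simply asserting $\liminf_n\mathbb{P}\{L_T^{1/n}(\zeta_T^n)\geq 0\}=1$ without treating that event (in Proposition \ref{prop3} the analogous step is harmless only because there the margin $\alpha$ is given for free rather than bought by shaving). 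So your diagnosis identifies a real soft spot that the paper's own argument shares.

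However, your repair does not work, and this is a genuine gap in the proposal. Everything in your third paragraph rests on the claim that an arbitrage on $(\bar{K}_t^N)$ can be upgraded to a (random bounded) strong arbitrage on $(\bar{K}_t^N)$ ``because (a5) makes every genuine trade strictly costly.'' No result in the paper gives this, and it is false in general: strong arbitrage requires a time $t$, an event $B_t\in\mathcal{F}_t$ of positive probability and an $\mathcal{F}_t$-measurable $m_t>0$ with $\bar{L}_T(V_T)\geq m_t$ on $B_t$, and a one-period buy-and-hold arbitrage whose payoff $\underline{S}_1(\infty)-\overline{S}_0(\infty)\geq 0$ is positive with positive probability but has essential infimum zero on every positive-probability $\mathcal{F}_0$-event admits no such lower bound. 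Assumption (a5) is a same-date bid--ask gap and says nothing about the intertemporal profit structure; the paper's own upgrades to strong arbitrage (Theorem \ref{thm:3.3} and its corollaries) take as input an arbitrage on the \emph{concave} $(G_t)$ market and crucially use $(L_1)$, i.e.\ the positive gap $c_t$ between $L_t$ and $\bar{L}_t$ --- leverage that simply does not exist when the input arbitrage already lives on the cone market $\bar{K}^N$. (You correctly track that under (B1)--(B2) only an $(L_0)$-type bound is available, so you need \emph{random bounded} strong arbitrage to run the cash-correction of Theorem \ref{thm:3.2}; but that only raises the bar for the unfounded upgrade.) With that step removed, your argument proves nothing beyond what your first paragraph establishes on $\{\bar{L}_T(\zeta_T)>0\}$, whereas the paper's proof, whatever one thinks of its handling of the break-even set, proceeds entirely by the perturb-and-rescale scheme and never invokes strong arbitrage.
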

\begin{proof}
We adapt the proof of Theorem \ref{thmm}. Assume that $\zeta_T=\sum_{i=0}^T \xi_i$, where 
$\xi_i=(\xi_i^1,\xi_i^2)\in  L^0(-\bar{K}_i,{\cal{F}}_i)$ and $\xi_i^2\in \mathbb{Z}$ forms an arbitrage on 
$\bar{K}$ market i.e. $\mathbb{P}\left\{\bar{L}_T(\zeta_T)\geq 0\right\}=1$ and 
$\mathbb{P}\left\{\bar{L}_T(\zeta_T) > 0\right\}>0$. Define $\xi_{i}^{1,n}=\xi_i^1 (1- {1\over n})$ for 
$\xi_i^2<0$ and $\xi_i^1\xi_i^2<0$ and $\xi_{i}^{1,n}=\xi_i^1(1 + {1\over n})$ for $\xi_i^2>0$ and 
$\xi_i^1\xi_i^2<0$ and $\xi_{i}^{1,n}=\xi_{i}^1$ in the other cases. Let $\xi_{i}^{2,n}=\xi_i^2$. 
Therefore $\zeta_T^n:=\sum_{i=0}^T \xi_{i}^n=\zeta_T - \sum_{i=0}^T 
(\eta(n,\xi_i^1)\xi_i^1,0)$, where $\eta(n,\xi_i^1)={1\over n}\xi_i^1$ for $\xi_i^2<0$ and 
$\xi_i^1\xi_i^2<0$, 
$\eta(n,\xi_i^1)=-{1\over n}\xi_i^1$ for  $\xi_i^2>0$ and $\xi_i^1\xi_i^2<0$ and $\eta(n,\xi_i^1)=0$ in 
the other cases. Then $\zeta_T^n \in R_T^0(\bar{K}^{1\over n})=\sum_{t=0}^T 
L^0(-\bar{K}_t^{1\over n},{\cal{F}}_t)$ and under (B1) and (B2) $\liminf_{n \to \infty
}\mathbb{P}\left\{L_T^{1\over n}(\zeta_T^n)\geq 0\right\}=1$ and $\liminf_{n \to \infty }\mathbb{P}\left\{L_T^{1\over n}(\zeta_T) > 0\right\}>0$. By Lemma \ref{Lem4} there is a positive integer 
$m_{t,n}$ such that $m_{t,n}\xi_{i}^n\in -G_t$ and for $m(m)=m_{0,n}+\ldots 
m_{T,n}$ we have $m(n)\zeta_T^n \in R_T^0$. Under (B1) and (B2) 
$\liminf_{n \to \infty}\mathbb{P}\left\{L_T^{1\over n}(\zeta_T^n)\geq 0\right\}=1$ and $\liminf_{n \to \infty 
}\mathbb{P}\left\{L_T^{1\over n}(\zeta_T^n) > 0\right\}>0$, and the same holds for $m(n)\zeta_T^n$.
By Lemma \ref{Lem4} again there is a positive integer $\bar{m}(n)$ such that 
$\bar{m}(n)m(n)\zeta_T^\alpha\in G_T^0$ and $\liminf_{n \to \infty }\mathbb{P}\left\{L_T^{1\over n} (\bar{m}(n)m(n)\zeta_T^n) > 0\right\}>0$, which completes the proof of asymptotic arbitrage.  
\end{proof}

\noindent{\bf Acknowledgements.} The authors are greateful to the reviewers for helpful comments and suggestions.


\begin{thebibliography}{99}
\bibitem{CJP} {U. Cetin, R.A. Jarrow, P. Protter}, {\em Liquidity risk and arbitrage pricing theory}, Finance Stoch. 8 (2004), no. 3, 311--341.
\bibitem{UR} {U. Cetin, L.C.G. Rogers},  {\em Modeling liquidity effects in discrete time}, Math. Finance 17 
    (2007), no. 1, 15--29.
\bibitem{DTP} {T.P. Dinh, H.A. le Thi, V.N. Pham, Y.S. Niu}, {\em DC programming approaches for discrete portfolio optimization under concave transaction costs}, Optim Lett 10 (2016), 261--282.
\bibitem{EL} {R. Ellie, E. Lepinette}, {\em Approximate hedging for nonlinear transaction costs
on the volume of traded assets}, Finance Stoch 19  (2015), 541--581.    
\bibitem{GM} {L. Gonon, J. Muhle-Karbe, X. Shi}, {\em Asset pricing with general transaction costs: Theory and numerics}, Math. Finance 31 (2021), 595--648. 
\bibitem{HH} {H. Holden, L. Holden}, {\it Optimal rebalancing of portfolios with transaction
costs}, Stochastics: An International Journal of Probability and Stochastic Processes 85 (2013),
no. 3, 371--394.
  %\bibitem{DM} {C. Dellacherie, PA Meyer} {\it Probabilities and Potentiel, A,} (1978) Hermann.
%\bibitem{E} {IV Evstigneev }  {\it Measurable Selection and Dynamic Programming,} Math. Oper. Res. 1 (1976) 
%\bibitem{HL} {Hern\'andez-Lerma 0, Lasserre JB} (1996) {\it Discrete-Time Markov Control Processes. Basic 
%Optimality Criteria,}Springer.
\bibitem{KS} {Y. Kabanov, M. Safarian}, {\it Markets with Transaction Costs. Mathematical Theory},
Springer, 2009.
\bibitem{Kal} {O. Kallenberg}, {\it Foundations of Modern Probability, Second Ed.,} Springer, 2002.
%\bibitem{KalMK1} {J. Kallsen, J. Muhle-Karbe}  {\it On using shadow prices in portfolio optimization with 
%transaction costs,} Ann. Appl. Probab. 20 (2010), 1341--1358.
%\bibitem{KalMK2} {J. Kallsen, J. Muhle-Karbe }  {\it Existence of shadow prices in finite probability spaces,}  Math. Methods Oper. Res. 73 (2011), 251--262.
\bibitem{LT1} {E.  Lepinette, T. Tran},  {\em General financial market model defined by a liquidation value 
    process}, Stochastics: An International Journal of Probability and Stochastic Processes 88    (2016), no. 3, 437--459.
\bibitem{LT2} {E. Lepinette, T. Tran}, {\em Arbitrage theory for non convex financial market models}, 
    Stochastic Process. Appl. 127 (2017), no. 10, 3331--3353.
\bibitem{MSXZ} {J. Ma, Q. Song, J. Xu, J. Zhang}, {\em Optimal portfolio selection under concave price impact},     Appl. Math. Optim. 67 (2013), no. 3, 353--390.
%\bibitem{NS} {A. Neufeld, M. Sikic}, {\em Nonconcave robust optimization with discrete strategies under 
%Knightian uncertainty}, Math. Methods Oper. Res. 90 (2019), no. 2, 229--253.
\bibitem{P1} {T. Pennanen}, {\em Superhedging in illiquid markets}, Math. Finance 21 (2010), no. 3,  519--540.
\bibitem{P2} {T. Pennanen},  {\em Arbitrage and deflators in illiquid markets}, Finance Stoch. 15 (2011), no. 1, 57--83.
\bibitem{P3} {T. Pennanen, I. Penner}, {\em Hedging of Claims with Physical Delivery under Convex Transaction Costs}, SIAM J. Financial Math. 1 (2010), 158--178.
%\bibitem{RS} {T. Rogala,  {\L}. Stettner}  {\it On Construction of Discrete Time Shadow Price,} AMO 72 (2015) 
%(3), 391--433.
%\bibitem{RS2} {T. Rogala, {\L}. Stettner}, {\em Optimal strategies for utility from terminal wealth with 
%general bid and ask prices}, AMO 83 (2021), 405--436.
\bibitem{Zh} {J. Zhao, E.L. Lepinette}, {\em A complement to the Grigoriev theorem for the Kabanov model},
Theory Probab. Appl. 65 (2020), no. 2, 322--329.

\end{thebibliography}
\end{document}